\newcommand{\corE}{\textcolor{violet}}
\newcommand\rank{{\operatorname{rank}}}
\newcommand\Q{{\mathbf{Q}}}
\renewcommand\P{{\mathbf{P}}}
\newcommand\E{{\mathbf{E}}}
\newcommand\tr{{\operatorname{tr}}}
\newcommand\Z{{\mathbf{Z}}}
\newcommand\F{{\mathbf{F}}}
\newcommand\ep{\varepsilon}
\newcommand\Be{{\mathbf e}}
\newcommand\Bx{{\mathbf x}}
\newcommand\CE{{\mathcal E}}
\newcommand\CF{{\mathcal F}}
\newcommand\CP{{\mathcal P}}
\newcommand\CR{{\mathcal R}}
\renewcommand\mod{\ \operatorname{mod}\ }
\newcommand\supp{\mathbf{supp}}
\newcommand\eps{\varepsilon}
\newcommand\codim{\operatorname{codim}}
\newcommand\lang{\langle}
\newcommand\rang{\rangle}
\newcommand\wh{\widehat}
\newcommand\Sp{\operatorname{Span}}
\renewcommand\a{\alpha}
\newcommand\Spec{\mathbf{Spec}}
\newcommand\cok{\mathbf{Cok}}
\newcommand\bs{\backslash}
\theoremstyle{plain}
\newtheorem{theorem}[subsection]{Theorem}
\newtheorem{conjecture}[subsection]{Conjecture}
\newtheorem{prop}[subsection]{Proposition}
\newtheorem{lemma}[subsection]{Lemma}
\newtheorem{corollary}[subsection]{Corollary}
\newtheorem{cor}[subsection]{Corollary}
\newtheorem{remark}[subsection]{Remark}
\newtheorem{claim}[subsection]{Claim}
\theoremstyle{definition}
\newtheorem{definition}[subsection]{Definition}
\begin{document}
\title{Surjectivity of near square random matrices}
\author{Hoi H. Nguyen}
\email{nguyen.1261@math.osu.edu}
\thanks{The first author is supported by research grant DMS-1600782}

\author{Elliot Paquette}
\email{paquette.30@math.osu.edu}

\email{}

%\thanks{}

\address{Department of Mathematics, The Ohio State University, 231 West 18th Avenue, Columbus, OH 43210}

\subjclass[2000]{}

    \newcommand{\hoi}[1]{{\color{red} \sf $\clubsuit\clubsuit\clubsuit$ Hoi: [#1]}}

\begin{abstract} We show that a nearly square iid random integral matrix is surjective over the integral lattice with very high probability. This answers a question by Koplewitz \cite{S1}. Our result extends to sparse matrices as well as to matrices of dependent entries.  
\end{abstract}

\maketitle

%{\hoi{Please insert your email address.}}

%\nocite{*}

\section{Introduction}\label{secton:intro} In this note we study random rectangular matrices $M_{n \times (n+u)}=(M_{ij})$ of size $n \times (n+u)$, where $n \to \infty$ and $u\ge 0$, and the entries $M_{ij}$ are i.i.d copies of a random variable $\xi$ taking integral values and such that for any prime $p$
\begin{equation}\label{eqn:xi}
  \max_{x \in \Z/p\Z} \P(\xi=x) \le 1-\alpha_n,
\end{equation}
where $\alpha_n>0$ is a parameter allowed to depend on $n$. Such distributions are called {\it $\alpha_n$-balanced}.

For random square matrices of random Bernoulli entries taking values 0 and 1 with probability 1/2, the problem to estimate the probability $p_n$ of $M_{n \times n}$ being singular has attracted quite a lot of attention. In the early 60's Koml\'os \cite{Ko} showed $p_n=O(n^{-1/2})$.  This bound was significantly improved by Kahn, Koml\'os, and Szemer\'edi in the 90's to $p_n \le 0.999^n.$
About ten years ago, Tao and Vu \cite{TV} improved the bound to $p_n\le (3/4+o(1))^n$. We also refer the reader to \cite{RV} by Rudelson and Vershynin for implicit bounds of type $e^{-cn}$.  The most recent record is due to and Bourgain, Vu and Wood \cite{BVW}, who show:
\begin{theorem}\label{theorem:BVW} 
  $$p_n \le \left(\frac{1}{\sqrt{2}}+o(1)\right)^n.$$  
\end{theorem}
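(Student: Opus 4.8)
\emph{Proof sketch.} The plan is to follow the row-exposure strategy of Kahn--Koml\'os--Szemer\'edi, in the refined form of Tao--Vu, and to optimise the parameters as in Bourgain--Vu--Wood. Write $M=M_{n\times n}$ with rows $v_1,\dots,v_n\in\{0,1\}^n$, and for $w\in\R^n$ let $\rho(w)=\sup_{t\in\R}\P(v\cdot w=t)$ denote the concentration function of a single random row $v$, so that $\P(v\cdot w=0)\le\rho(w)$; by the Erd\H{o}s--Littlewood--Offord inequality $\rho(w)=O(n^{-1/2})$ once the nonzero coordinates of $w$ are distinct, and Hal\'asz's inequality yields exponentially small bounds when the coordinates are suitably spread out. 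First I would split $p_n=\P(\codim\ker M=1)+\P(\codim\ker M\ge 2)$; the second term is handled by a union bound over pairs of disjoint ``defects'' --- each, such as a pair of coincident columns, of probability $2^{-n}$ --- together with a genuinely lower-dimensional singularity controlled by an easier, stronger bound, and is $o((1/\sqrt2)^n)$. On the event $\codim\ker M=1$, some $n-1$ rows are independent, so by symmetry
\[
p_n\ \le\ n\,\P\bigl(v_1,\dots,v_{n-1}\text{ independent and }v_n\cdot w=0\bigr)+o\bigl((1/\sqrt2)^n\bigr),
\]
where $w=w(v_1,\dots,v_{n-1})\in\Z^n$ is the primitive integral normal to the span of $v_1,\dots,v_{n-1}$: it is integral by Cram\'er's rule and satisfies $\|w\|_\infty\le n^{n/2}$ by Hadamard's inequality. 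Conditioning on the first $n-1$ rows, the inner probability is the atom $\rho_0(w):=\P(v_n\cdot w=0)\le\rho(w)$.

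The heart of the argument is a structured/unstructured dichotomy with a threshold $\delta=\delta_n$. On $\{\rho(w)\le\delta\}$ the contribution to the displayed bound is at most $n\delta$. On $\{\rho(w)>\delta\}$ I would invoke the inverse Littlewood--Offord theorem of Tao--Vu: any such $w$ has all but $o(n)$ of its coordinates contained in a generalised arithmetic progression of bounded rank and volume $O(\delta^{-1})$. One then bounds the probability that $v_1,\dots,v_{n-1}$ possess a structured normal by a union bound over this now heavily restricted family of $w$, each weighted by $\rho_0(w)^{n-1}$ --- the cost of all of $v_1,\dots,v_{n-1}$ being orthogonal to $w$ --- with the number of structured $w$ at each dyadic concentration scale controlled by the counting version of the inverse theorem. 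The quality of that counting estimate is exactly what pins down the admissible $\delta$: with $\delta=(1/\sqrt2+o(1))^n$ both the unstructured term $n\delta$ and the (suitably bounded) structured sum are $(1/\sqrt2+o(1))^n$, the benchmark in the latter being the coincident-column directions $w=e_i\pm e_j$, each of probability $2^{-n}$. Summing over the $O(n\log n)$ dyadic scales costs only a subexponential factor, and combining with the first step yields $p_n\le(1/\sqrt2+o(1))^n$.

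The main obstacle is the structured case --- proving a counting inverse Littlewood--Offord estimate sharp enough that, within the Hadamard height, the number of integral $w$ with $\rho(w)\approx\rho$ is controlled well enough to force $\P(v_1,\dots,v_{n-1}\text{ have a structured normal})\le(1/\sqrt2+o(1))^n$. This demands both the structural input (so that structured $w$ occupy few small progressions) and a careful accounting of the exceptional coordinates and of the rank of the progression; it is precisely here that Bourgain--Vu--Wood improve upon the earlier $(3/4+o(1))^n$ bound of Tao--Vu. Everything else --- the symmetrisation over which $n-1$ rows are taken to be independent, the Cram\'er--Hadamard height bound, and the corank-$\ge 2$ estimate --- is comparatively routine.
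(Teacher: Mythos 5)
This theorem is not proved in the paper at hand: it is quoted as a known result and attributed to Bourgain, Vu and Wood via \cite{BVW}, so there is no internal proof for your sketch to be compared against. The proposal must therefore be judged on its own merits as a sketch of that external argument.

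Your sketch correctly reproduces the Kahn--Koml\'os--Szemer\'edi framework in the form refined by Tao and Vu: symmetrize over which $n-1$ rows are assumed independent, pass to the primitive integral normal $w$ (integral by Cram\'er, $\|w\|_\infty\le n^{n/2}$ by Hadamard), bound the last row's contribution by the atom probability $\rho(w)$, dispatch the corank $\ge 2$ contribution separately, and split into structured and unstructured normals via a threshold $\delta$, with an inverse Littlewood--Offord theorem handling the structured side. All of that is an accurate roadmap. The problem is that this roadmap, run with the Tao--Vu inverse theorem and the accompanying counting estimate, delivers only $(3/4+o(1))^n$. The entire content of the theorem is in the claim that one can take $\delta=(1/\sqrt2+o(1))^n$ and still control the structured sum, and you have explicitly left that step as an ``obstacle'' rather than filling it in: asserting the structured sum is ``suitably bounded'' is not an argument, and no amount of optimizing inside the Tao--Vu counting lemma closes the gap between $3/4$ and $1/\sqrt2$. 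Bourgain--Vu--Wood's improvement rests on a genuinely new counting/fragmentation input for structured normals (not simply a better constant in the same inverse theorem), and that input is absent here. Consequently, what you have is an honest outline of the parts of the argument that predate \cite{BVW}, together with an accurate identification of where the hard work lies, but it does not establish the bound $(1/\sqrt2+o(1))^n$.
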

These results imply that with very high probability the linear map $M_{n \times n}$ is injective over $\Z^n$ (and hence the lattice $M_{n\times n}(\Z^n)$ has full rank in $\Z^n$.) Another fundamental question of interest is the surjectivity onto $\Z^n$, more specifically:

%\begin{center}
{\it Is it true that with high probability $M_{n \times n}$ is also surjective over $\Z^n$ (in other words, the quotient group $\Z^n/M_{n \times n}(\Z^n)$  is trivial)?}
%\end{center}

Unfortunately, the answer to this question turns out to be negative: with high probability $M$ is never surjective over $\Z^n$. To explain this at the heuristic level, assume that the vector $\Be_1=(1,0,\dots,0)$ is in the image space $M_{n \times n}(\Z^n)$, then (assuming that $M_{n\times n}$ is non-singular)
$$\Bx = M_{n \times n}^{-1}(\Be_1) = ((M_{n \times n}^{-1})_{11},\dots, (M_{n \times n}^{-1})_{1n})^T \in \Z^n.$$ 
However, we have $(M_{n \times n}^{-1})_{1i} = \frac{\det(M^{1i})}{\det(M_{n \times n})}$, where $M^{1i}$ is the matrix obtained from $M_{n \times n}$ by removing the first row and the $i$-th column. By the co-factor expansion
\begin{equation}\label{eqn:det}
  \det(M_{n \times n})= \sum_{i=1}^n (-1)^{i-1} M_{1i}\det(M^{1i}).
\end{equation} 
But as the $M_{1i}$ are independent from the submatrices $M^{1i} ,1\le i\le n$, it is highly unlikely that the random sum $|\sum_{i=1}^n (-1)^{i-1} M_{1i}\det(M^{1i})|$ becomes  smaller than all $|\det(M^{1i})|$ so that the components $\frac{\det(M^{1i})}{\det(M_{n \times n})}$ of $\Bx$ are all integral.

Having seen that $M_{n\times n}: \Z^n \to \Z^n$ is unlikely to be surjective, it is natural to think of rectangular matrices $M_{n \times (n+u)}: \Z^{n+u} \to \Z^n$ which might have better chance to be surjective. In fact, in the past several years there have been exciting developments (see for instance \cite{S-thesis,M2,W0,W1}) in the study of $M_{n \times (n+u)}(\Z^{n+u})$ for various ensembles of $M_{n \times (n+u)}$. For instance a special version of a recent result by Wood \cite[Corollary 3.4]{W1} shows:

\begin{theorem}\label{theorem:cok:B} Let $u\ge 0$ be a fixed integer. Let $M_{n \times (n+u)}$ be a random matrix with entries being iid copies of an $\alpha_n$-balanced random variable  of fixed $\alpha_n>0$. Let $P$ be a finite set of primes, then
  $$\lim_{n\to \infty} \P\Big(\cok(M_{n\times (n+u)})_P \simeq \{id\}\Big) =  \prod_{p\in P} \prod_{k=1}^\infty (1-p^{-k-u}),$$ 
  where $G_P = \prod_{p\in P} G_p$ is the product of $p$-Sylow subgroups of $G$ and the cokernel $\cok(M_{n\times m})$ is the quotient group $\Z^n/M_{n \times m}(\Z^m)$. 
\end{theorem}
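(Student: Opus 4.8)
The plan is the moment method for random finite abelian groups, following Wood. Concretely, for every finite abelian group $G$ whose order is divisible only by primes in $P$, a homomorphism $\cok(M_{n\times(n+u)})\to G$ is the same as a homomorphism $F\colon\Z^n\to G$ killed by $M$, and $F$ is onto iff the induced map on $\cok$ is; writing $v_i:=F(\Be_i)\in G$ and using that the $n+u$ columns of $M$ are i.i.d., one gets
\begin{equation*}
\E\big[\#\Sur(\cok(M_{n\times(n+u)}),G)\big]=\sum_{\substack{v\in G^n\\ \langle v_1,\dots,v_n\rangle=G}}P_v^{\,n+u},\qquad P_v:=\P\Big(\textstyle\sum_{i=1}^n\x_i v_i=0\text{ in }G\Big),
\end{equation*}
with $\x_1,\dots,\x_n$ i.i.d.\ copies of $\x$. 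A $P$-group receives no nonzero homomorphism from the prime-to-$P$ part of $\cok(M_{n\times(n+u)})$, so the left-hand side equals $\E[\#\Sur(\cok(M_{n\times(n+u)})_P,G)]$; thus the displayed sum is exactly the $G$-moment of the random group $\cok(M_{n\times(n+u)})_P$, and I would aim to show it tends to $|G|^{-u}$ for every such $G$.

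Call $v\in G^n$ \emph{spread} if every maximal subgroup of $G$ contains at most $(1-\eps)n$ of its coordinates, $\eps=\eps(\alpha_n,G)>0$ a small constant to be chosen. Expanding $P_v=|G|^{-1}\sum_{\psi\in\widehat G}\prod_i\E[\psi(\x v_i)]$ over the character group and noting that when $v_i\notin\ker\psi$ the value $\psi(v_i)$ is a nontrivial root of unity of order dividing $\exp(G)$, the $\alpha_n$-balancedness of $\x$ (which survives reduction modulo any divisor of $\exp(G)$, all of whose prime factors lie in $P$) gives $|\E[\psi(\x v_i)]|\le 1-c$ for a constant $c=c(\alpha_n,G)>0$; hence for spread $v$ each nontrivial $\psi$ contributes at most $(1-c)^{\eps n}$, so $P_v=|G|^{-1}(1+o(1))$ and $P_v^{\,n+u}=(1+o(1))|G|^{-n-u}$ (the error being comfortably $o(1/n)$). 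The spread tuples — all of which generate $G$ — make up a $1-o(1)$ fraction of $G^n$ once $\eps$ is small (a union bound over the finitely many maximal subgroups, each of index $\ge2$, against the binary-entropy count of the $<\eps n$ coordinates allowed to escape a fixed maximal subgroup), so their total contribution to the sum is $(1+o(1))|G|^{-u}$.

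The crux, and the step I expect to be the main obstacle, is that the non-spread generating tuples contribute only $o(1)$. Given such a $v$, iterate the passage to a maximal subgroup holding all but an $\eps$-fraction of the current coordinates until those coordinates are themselves spread; this terminates in at most $\log_2|G|$ steps at a proper subgroup $L\lneq\langle v_1,\dots,v_n\rangle$ outside of which lie only a set $S$ of fewer than $\eps'n$ coordinates, with $\eps'=O(\eps\log|G|)$. Because the $L$-coordinates are spread in $L$, every character nontrivial on $L$ makes a negligible contribution to $P_v$, leaving $P_v\le|L|^{-1}\,\P\big(\sum_{i\in S}\x_i\bar v_i=0\text{ in }G/L\big)+o(1)$; and since $L$ is proper in $\langle v_1,\dots,v_n\rangle$, some $\bar v_i$ with $i\in S$ is a nonzero element of $G/L$ whose order is divisible only by primes in $P$, so balancedness pins that probability below $1-\alpha_n$, giving $P_v\le|L|^{-1}(1-\alpha_n/2)$ for $n$ large. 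On the other hand the number of tuples of this shape is at most $\mathrm{poly}(n)\cdot 2^{nH(\eps')}|G|^{\eps'n}|L|^{\,n}$ (with $H$ the binary entropy), so the factor $|L|^{\,n}$ cancels against the $|L|^{\,n+u}$ in the denominator of $P_v^{\,n+u}$ up to a harmless $|L|^{-u}\le1$, and the contribution of all such tuples, summed over the finitely many possible $L$ and strata, is at most $\mathrm{poly}(n)\cdot\big(2^{H(\eps')}|G|^{\eps'}(1-\alpha_n/2)\big)^n=o(1)$ once $\eps'$ is small. This is exactly where the hypothesis that $\alpha_n$ is a \emph{fixed} positive constant enters: it keeps $c$, the gain $1-\alpha_n/2$, and the admissible $\eps,\eps'$ all uniformly bounded away from their critical values. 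Combining the last two paragraphs, $\E[\#\Sur(\cok(M_{n\times(n+u)})_P,G)]\to|G|^{-u}$ for every finite abelian $P$-group $G$.

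Finally, since these limiting moments are all bounded by $1$, they grow slowly enough that the moments of a random finite abelian group determine its distribution and force convergence in distribution to the unique law $\mu$ with $\E_\mu[\#\Sur(-,G)]=|G|^{-u}$ for all finite abelian $P$-groups $G$ (the standard moment-problem input; see \cite{W1} and the references there). This $\mu$ is the product over $p\in P$ of the Cohen--Lenstra--type measures $\mu_p(\{H\})=|H|^{-u}|\Aut H|^{-1}\prod_{k\ge1}(1-p^{-k-u})$ on finite abelian $p$-groups — the normalization being the identity $\sum_H|H|^{-u}|\Aut H|^{-1}=\prod_{k\ge1}(1-p^{-k-u})^{-1}$, and one checks directly that $\sum_H\mu_p(\{H\})\#\Sur(H,G)=|G|^{-u}$ for $p$-groups $G$. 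Evaluating at the trivial group, where $|\Aut\{id\}|=1$, and using independence across $p\in P$ gives
\begin{equation*}
\lim_{n\to\infty}\P\big(\cok(M_{n\times(n+u)})_P\simeq\{id\}\big)=\mu(\{id\})=\prod_{p\in P}\prod_{k=1}^\infty(1-p^{-k-u}),
\end{equation*}
as desired.
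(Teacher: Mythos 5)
The paper does not prove this result; it cites it as a special case of Wood \cite[Corollary 3.4]{W1}, so there is no in-paper argument to compare against. Your write-up is a self-contained rendering of the moment method underlying that reference: compute the $G$-moments $\E[\#\Sur(\cok(M_{n\times(n+u)}),G)]$, show they tend to $|G|^{-u}$ via a spread/non-spread decomposition of generating tuples and a character-sum estimate for $P_v$, then invoke the moment-determines-distribution theorem for random finite abelian groups and identify the limit law as the Cohen--Lenstra measure with parameter $u$. The sketch is essentially correct. Two places to tighten if you flesh this out: the error in $P_v\le|L|^{-1}\P\big(\sum_{i\in S}\x_i\bar v_i=0\big)+o(1)$ should be tracked quantitatively as $O\big(|G|(1-c)^{\eps n/2}\big)$, so it can be absorbed into the $(1-\alpha_n/2)$ factor uniformly over $v$ before raising to the power $n+u$; and in the iterated descent, ``spread'' at step $j$ should be measured against the \emph{current} coordinate count $m_j\ge(1-\eps')n$, with $|S|\le\sum_j\eps m_j\le\eps n\log_2|G|$, which is exactly where $\eps'=O(\eps\log|G|)$ and the requirement $m_j\ge n/2$ (so at least $\eps n/2$ coordinates escape $\ker(\psi|_L)$) come from. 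With those bookkeeping details in place, the argument goes through and reproduces the stated limit.
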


We remark that $P$ is fixed and $n\to \infty$ in this result. 
However as $P$ increases, the probability on the right hand side of the limit becomes arbitrarily small.
  Hence it follows that 
\[
  \limsup_{n\to \infty}  
  \P\Big(\cok(M_{n \times n}) \simeq \{id\} \Big)
  \leq \inf_{P} 
  \lim_{n\to \infty}  
  \P\Big(\cok(M_{n \times n})_P \simeq \{id\} \Big)
  =0
\]
which officially answers our question above.

In the opposite direction, it has been conjectured by Koplewitz \cite{S1,S-thesis} that
\begin{conjecture}\label{conj:surj} Let the matrix entries be iid copies of an $\alpha_n$-balanced random variable  of fixed $\alpha_n>0$. Then for any fixed constant $\eps>0$,
  \begin{equation*}\label{conj:surj:weak}
    \lim_{n\to \infty}  \P\Big(\cok(M_{ n \times \lfloor (1+\eps) n \rfloor}) \simeq \{id\} \Big)  = 1.
  \end{equation*}
  Also, with $u\to \infty$ together with $n$
  \begin{equation*}\label{conj:surj:strong}
    \lim_{n\to \infty}  \P\Big(\cok(M_{ n \times (n+u)}) \simeq \{id\} \Big)  = 1.
  \end{equation*}
\end{conjecture}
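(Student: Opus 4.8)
Recall that $\cok(M_{n\times(n+u)})\simeq\{id\}$ if and only if the greatest common divisor of the $n\times n$ minors of $M_{n\times(n+u)}$ equals $1$, equivalently, if and only if $M_{n\times(n+u)}\bmod p$ has rank $n$ over $\mathbb F_p$ for every prime $p$. Hence
\[
\P\big(\cok(M_{n\times(n+u)})\not\simeq\{id\}\big)\ \le\ \sum_{p\ \mathrm{prime}}\P\big(\rk_{\mathbb F_p}(M_{n\times(n+u)}\bmod p)<n\big),
\]
and the plan is to show the right side tends to $0$ as $u\to\infty$, which covers both parts of the conjecture. Two reductions make the series manageable. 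First, on the event $\{\det M_{n\times n}\ne0\}$ --- whose complement has probability $o(1)$ by the singularity bounds for square random matrices applied to the first $n$ columns (Theorem~\ref{theorem:BVW} and its $\alpha_n$-balanced analogues) --- a rank drop modulo $p$ forces $p\mid\det M_{n\times n}$, and Hadamard's inequality gives $|\det M_{n\times n}|\le(\|\xi\|_\infty\sqrt n)^n$ (or $\le n^{Cn}$ with probability $1-o(1)$ under the moment hypothesis); so only primes $p\le P_{\max}:=e^{\Theta(n\log n)}$ can contribute. Second, the target is the essentially sharp estimate
\[
\P\big(\rk_{\mathbb F_p}M_{n\times(n+u)}<n\big)\ \le\ \sum_{j\ge1}\P\big(\rk_{\mathbb F_p}M_{n\times(n+u)}=n-j\big)\ \lesssim\ \sum_{j\ge1}p^{-j(u+j)}\ \lesssim\ p^{-(u+1)},
\]
uniformly in $n$ and in $p\le P_{\max}$; granting this, $\sum_{p}\P(\rk_{\mathbb F_p}M<n)\lesssim\sum_p p^{-(u+1)}$, the prime zeta function at $u+1$, which tends to $0$ as $u\to\infty$. (The obstruction at $p=2$ already contributes $\asymp 2^{-(u+1)}$, so the hypothesis $u\to\infty$ cannot be dropped.)

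The core task is therefore the uniform rank-deficiency bound displayed above. I would process the $n+u$ columns $X_1,\dots,X_{n+u}$ one at a time, letting $V_i$ be the $\mathbb F_p$-span of the first $i$ columns; a rank deficiency of exactly $j$ means the span fails to grow on exactly $u+j$ of the $n+u$ steps. The contribution of a step at which $\codim_{\mathbb F_p}V_i=k\ge1$ is governed by $\P(X_{i+1}\in V_i)=\P(\langle \nu, X\rangle\equiv0\bmod p)$ for a normal $\nu$ to $V_i$: for a \emph{generic} $\nu$ this is $\lesssim p^{-k}$ by anticoncentration over $\Z/p\Z$, which is exactly what produces the $p^{-j(u+j)}$ law; the loss comes from \emph{structured} (compressible, or small--$\LCD$) normals $\nu$, which must be controlled by an inverse Littlewood--Offord theorem over $\Z/p\Z$ together with a Rudelson--Vershynin type net argument, showing that with probability $1-e^{-cn}$ the random span $V_i$ never has a structured normal modulo any $p\le P_{\max}$. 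Concretely, for a structured $\nu$ one only has $\P(\langle\nu, X\rangle\equiv0\bmod p)\le 1-\alpha_n$, but the number of such $\nu$ is small enough --- a constant-size support after scaling, or $\LCD$ confined to a dyadic window --- that the union bound $(\#\text{structured }\nu)\cdot(1-\alpha_n)^{\,\#\text{non-growing steps}}$ still beats $p^{-(u+1)}$ for $p\le e^{cn}$, while for the at most $O(n\log n)$ primes $p\in(e^{cn},P_{\max}]$ that can divide $\det M_{n\times n}$ one uses that the $\LCD$ of the relevant normals is comparable to $\|\nu\|_2$, which is exponential, so the per-prime probability is $e^{-\Theta(n)}$ and survives a union bound over only $O(n\log n)$ primes.

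The main obstacle is the uniformity in $p$: over $\R$, or for a fixed small prime, the inverse Littlewood--Offord theorem and the structure theorem for normals of random matrices are classical, but here $p$ is allowed to grow with $n$ up to $e^{\Theta(n\log n)}$, so the constants in these estimates may not be permitted to degenerate as $p\to\infty$, and the failure probability of the structure theorem must be of the same exponential quality as the Bourgain--Vu--Wood / Rudelson--Vershynin singularity bounds in order to absorb a union bound over all $\le e^{\Theta(n\log n)}$ relevant primes at once. A difficulty peculiar to the near-square setting is the bookkeeping of coranks $j\ge2$: such a matrix is far more likely to lose rank than to be singular outright, so one genuinely needs the $p^{-j(u+j)}$ law for every $j$, which requires the anticoncentration estimates to remain valid when conditioning on a span $V_i$ that has already been forced to be degenerate. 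Once these uniform estimates are assembled, one obtains $\P(\cok(M_{n\times(n+u)})\not\simeq\{id\})\to0$ whenever $u\to\infty$; for $u=\lfloor\eps n\rfloor$ every error term above is super-exponentially small in $n$, so that regime is considerably softer.
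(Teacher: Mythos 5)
Your plan actually targets both parts of Koplewitz's conjecture, whereas the paper proves (quantitatively) only the first part: Theorem~\ref{theorem:sur} needs $u\asymp\frac{\log n}{\alpha_n}\log\frac{\log n}{\alpha_n}+\log n$ extra columns, i.e.\ polylogarithmic, and the paper explicitly states it confirms ``the first conjecture''; the arbitrary-$u\to\infty$ statement remains open in the paper. The strategies also diverge at the decisive step. The paper conditions on $\det(M_{n\times n})\neq 0$ so that the only primes that can obstruct surjectivity are the $\le n^2$ divisors of $\det(M_{n\times n})$, controls the initial corank of $M_{n\times n}$ over those primes uniformly (via Odlyzko in Lemma~\ref{lemma:lowrank} or Maples in Theorem~\ref{theorem:corank'}), and then kills the corank one unit at a time by adding batches of columns, each batch succeeding with probability $\ge 1-n^{-B}$ by Lemma~\ref{lemma:O}. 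Because the union bound is over $\sim n^2$ primes and $\sim\log n$ corank levels, the per-batch failure probability must be $n^{-\Theta(1)}$, which forces $\Theta(\log n/\alpha_n)$ columns per batch; the $\log$ factors in $u$ are built into this method and cannot be removed. You instead propose to sum over \emph{all} primes directly and to rely on a sharp, $p$-uniform corank-distribution rate $\P(\rk_{\F_p}M_{n\times(n+u)}<n)\lesssim p^{-(u+1)}+e^{-cn}$ — a genuinely different route that, if available, really would give the stronger $u\to\infty$ statement.

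The gap is exactly that uniform estimate, and it is concrete. The paper does prove a corank bound of this type (Theorem~\ref{theorem:corank'}, and Corollary~\ref{cor:corank'} in the appendix), but with a parasitic factor: $\P(\rk M_{n\times n}=n-k)=O\bigl(n^k(p^{-k^2}+e^{-c\alpha n})\bigr)$, and the authors explicitly remark that removing the $n^k$ ``seems plausible'' but is not done. With that factor present, the $k=1$ term alone contributes $n\cdot p^{-(u+1)}$, so your prime sum is $\gtrsim n\,2^{-(u+1)}$ and forces $u\gg\log n$ — recovering roughly the paper's quantitative theorem, not the second part of the conjecture. To carry out your plan you would need (i) no polynomial-in-$n$ loss, (ii) the $p^{-k(k+u)}$ rate in the rectangular setting (the paper proves only the square case, and already Corollary~\ref{cor:fullrank} plus Theorem~\ref{theorem:corank:main'} fall short of the product structure in $k$ and $u$), and (iii) an $e^{-cn}$-quality error, all uniformly over $p\le e^{cn}$. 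Also, the technical engine in the paper is the finite-field Hal\'asz/Erd\H{o}s–Littlewood–Offord machinery over $\F_q$ (Theorem~\ref{theorem:LO}, characters, Kneser's theorem), not the real-valued $\LCD$/Rudelson–Vershynin framework you invoke; the latter does not transparently give the $p$-uniformity you need. One small imprecision worth flagging: $\P(\langle\nu,X\rangle\equiv0\bmod p)\approx p^{-1}$ for a single generic normal $\nu$; to get $p^{-k}$ you must simultaneously control $X$ against a full $k$-dimensional normal space, not a single vector.
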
  
To support these conjectures, Koplewitz himself showed in  \cite[Theorem 1]{S1} (see also \cite[Theorem 30]{S-thesis}) that $\P\Big(M_{ n \times  \lfloor (2+\eps) n \rfloor} \simeq \{id\} \Big)  \ge 1 - e^{-c_\eps n}$. In the same paper he also confirmed Conjecture \ref{conj:surj} for random matrices of entries distributed according to the Haar measure over the profinite completion $\widehat{\Z}$ of $\Z$.

In this note we confirm the first conjecture. In fact we are able to extend the result to very sparse matrices. More specifically,  we can assume $\xi$ to take integer values as in \eqref{eqn:xi} with 
\begin{equation}\label{eqn:log}
  \alpha_n \ge \frac{C_0 \log n}{n}
\end{equation}
for a sufficiently large constant $C_0$.

\begin{theorem}[Main result]\label{theorem:sur} Let $\xi$ be as in \eqref{eqn:log}. Assume furthermore that $\xi$ is bounded with probability one. Then for every $A, \eps_0>0$, there exist $B= B(A,C_0,\eps_0)$ and an absolute constant $c$ such that 
  $$\P\Big(\cok(M_{n \times (n+\lfloor B (\frac{\log n}{\alpha_n}\log(\frac{\log n}{\alpha_n}) +\log n)   \rfloor)}) \simeq \{id\}\Big) \ge 1 -O(n^{-A}+e^{-c \alpha_n n}).$$ 
  In particular, if $\alpha_n$ is fixed then
  \begin{equation}\label{sur}
    \P\Big (\cok(M_{ n \times \lfloor n + \log^{1+o(1)}n \rfloor}) \simeq \{id\}\Big)  \ge  1 - O(n^{-\omega(1)});
  \end{equation}
  as well as if $\alpha_n \ge \frac{\log^{O(1)}n}{n}$ then
  \begin{equation}\label{sur:sparse}
    \P\Big (\cok(M_{ n \times \lfloor (1+o(1)) n\rfloor}) \simeq \{id\}\Big)  \ge  1 - O(n^{-\omega(1)}).
  \end{equation}
\end{theorem}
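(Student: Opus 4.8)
The plan is to show that the cokernel of $M_{n\times m}$ (with $m = n + \ell$, $\ell$ the stated shift) is trivial by proving it has no nontrivial finite quotient, i.e.\ that for every prime $p$ the reduction $\bar M$ modulo $p$ is surjective onto $(\Z/p\Z)^n$, and moreover that these events hold simultaneously over all $p$ with the failure probability summable. The key observation is that triviality of $\cok(M_{n\times m})$ over $\Z$ is equivalent to $\cok(M_{n\times m})_p \simeq \{\mathrm{id}\}$ for every prime $p$, and the latter is just the statement that $\bar M$ has full row rank $n$ over $\F_p$. Since $\xi$ is bounded by hypothesis, the determinantal minors of $M$ are bounded by $(Cn)^{n/2}$ in absolute value by Hadamard, so only primes up to $n^{O(n)}$, and in fact one can reduce to the regime of small and large primes treated separately, can possibly obstruct surjectivity.

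First I would handle a single fixed prime $p$. Surjectivity of $\bar M$ over $\F_p$ fails iff there is a nonzero $\bv\in\F_p^n$ with $\bv^T \bar M = 0$, i.e.\ $\bv$ is orthogonal to all $m$ columns. Conditioning on $\bv$, each column is an independent vector whose inner product with $\bv$ is, by the $\alpha_n$-balancedness hypothesis \eqref{eqn:xi}, nonzero with probability at least a constant multiple of $\alpha_n$ (using that $\bv\neq 0$ so at least one coordinate of $\bv$ is a unit mod $p$, hence $\bv^T(\text{column})$ is a genuinely spread-out sum). Thus $\P(\bv^T\bar M = 0) \le (1-c\alpha_n)^m$, and a union bound over the at most $p^n$ choices of $\bv$ (projectivized, so $(p^n-1)/(p-1)$) gives failure probability at most $p^n (1-c\alpha_n)^m$. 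For this to beat $p^n$ we need $m \gtrsim \frac{n\log p}{\alpha_n}$; this crude bound already suffices for primes that are not too large but is wasteful for large $p$, which is why the shift $\ell$ in the theorem has the $\frac{\log n}{\alpha_n}\log(\frac{\log n}{\alpha_n})$ shape rather than something linear in $n$.

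The main obstacle, and the heart of the argument, is controlling \emph{large} primes and, more importantly, handling the union over \emph{all} primes uniformly. The naive union bound $\sum_p p^n(1-c\alpha_n)^m$ diverges, so one cannot simply sum over primes. Instead I would argue structurally: if $\bar M$ is not surjective mod $p$ for some large $p$, then some $n\times n$ minor of $M$ vanishes mod $p$; but by Hadamard all $n\times n$ minors are at most $(Cn)^{n/2}$ in absolute value, so a \emph{fixed} large prime divides a fixed minor with the vanishing forced only if the minor is actually $0$ over $\Z$, or the prime is one of the $O(n)$ prime divisors of that (nonzero, bounded) minor. Hence the only genuinely dangerous regime is: (a) small primes $p = O(\mathrm{poly}(n))$, handled by the union bound above once $m - n \gg \frac{\log n}{\alpha_n}$ times a logarithmic correction, plus (b) the event that \emph{all} $n\times n$ minors of $M$ are divisible by a common large prime, equivalently $\cok$ has a large cyclic quotient. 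This second event I would control by the same orthogonality/anticoncentration method applied to the near-square submatrix $M_{n\times(n+O(1))}$ together with a Littlewood--Offord / least-common-denominator style bound (as in Rudelson--Vershynin, or via Theorem~\ref{theorem:cok:B}-type moment inputs) to show $\P(|\cok| > T) \le$ (small), uniformly in the potential prime, yielding the $e^{-c\alpha_n n}$ term. Combining the two regimes and optimizing the number of extra columns devoted to killing small primes versus killing large cyclic quotients produces exactly the stated shift $\ell \asymp B\big(\frac{\log n}{\alpha_n}\log(\frac{\log n}{\alpha_n}) + \log n\big)$ and the error $O(n^{-A} + e^{-c\alpha_n n})$; the specializations \eqref{sur} and \eqref{sur:sparse} then follow by plugging in $\alpha_n$ constant resp.\ $\alpha_n \ge \log^{O(1)} n / n$.
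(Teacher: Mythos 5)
Your high-level skeleton agrees with the paper's: reduce to surjectivity mod $p$ via Lemma~\ref{lemma:sur:equi}, use the Hadamard bound plus non-singularity to restrict to primes dividing a fixed minor, and handle the remaining (finitely many) primes by anticoncentration of the extra columns. However, there is a genuine and quantitatively serious gap in your treatment of those remaining primes.

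Your argument for a fixed prime $p$ unions over all $\sim p^{n-1}$ projective vectors $\bv$ and bounds $\P(\bv^T\bar M=0)\le(1-c\alpha_n)^m$ using only the per-entry balancedness, i.e.\ effectively Odlyzko's lemma in codimension one. For the union bound to close you need $m\gtrsim \frac{n\log p}{\alpha_n}$, which even for $p=2$ means $\Omega(n/\alpha_n)$ extra columns --- linear in $n$ when $\alpha_n$ is fixed, far beyond the stated $O(\log n\log\log n)$ budget. You acknowledge this is wasteful but the sketch you offer in its place (``Littlewood--Offord / LCD style bound,'' ``optimize the column split'') does not actually say how to get from $\Omega(n)$ extra columns down to polylogarithmically many, and the phrase ``logarithmic correction'' for small primes is not justified by the union bound you wrote. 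The step that would make this work is missing, and it is precisely the main technical input of the paper: a \emph{corank} bound for the square block $M_{n\times n}$ mod $p$. The paper first proves (via Odlyzko for the crude version, via Maples' Theorem~\ref{theorem:corank'} for the sharp version) that with probability $1-n^{-\omega(1)}$ the column span of $M_{n\times n}$ in $\F_p^n$ already has codimension $d_0 = O(\log n)$ (or $O(\alpha_n^{-1}\log n)$ for very large $p$), simultaneously for all $p$ in the restricted set $\CP^\ast$. Only then does it expose the extra columns iteratively in groups of size $\lceil B\log n/(\alpha_n d_{i-1})\rceil$, where Odlyzko's lemma with codimension $d_{i-1}$ (not codimension one) shows each group reduces the corank by one with probability $1-n^{-B}$; summing the group sizes gives the $\frac{\log n}{\alpha_n}\log\frac{\log n}{\alpha_n}+\log n$ shape. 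Without the initial corank bound you have no way to know that the defect to be repaired is only logarithmic, and without exposing columns against a subspace of growing codimension (rather than a fixed vector $\bv$) you cannot exploit the full strength of Odlyzko. To fix your argument you would need to replace the union over $\bv\in\F_p^n$ by a union over low-codimension column subspaces (or invoke a corank theorem outright), and then run the iterative exposure.
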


Note that a balanced assumption on $\xi$ is necessary as the results no longer hold for instance if we work with the Bernoulli $\pm 1$ ensemble; in this case the matrix cannot be surjective modulo 2 for even $n$.  Note also by considering the $\left\{ 0,1 \right\}$ ensemble with $\alpha_n = \P(\xi = 1),$ we can see that roughly the stated number of additional columns is necessary up to multiplicative constants, just by considering rows that are identically $0.$ 
%{\hoi{Which equation are you referring to?}.}

We will also discuss an extension to a family of matrices of dependent entries, see Section \ref{section:dependent}. Our method is  short  and direct. We will first prove a slightly weaker version (Theorem \ref{theorem:simple}) by relying on a totally elementary lemma by Odlyzko (Lemma \ref{lemma:O}). We then refine the method by using a more involved result by Maples from \cite{M1} (Theorem \ref{theorem:corank'}). However, as \cite{M1} appears to be slightly incomplete and contains several (minor) errors, we will take this opportunity to recast Maples' proof toward our sparsest settings. Along the way, we show that this approach also yields a completely new singularity bound for sparse integral matrices. 

\begin{theorem}\label{theorem:M}  There exists an absolute constant $c>0$ such that as long as the entries of $M_{n\times n}$ are iid copies of $\xi$ distributed as in \eqref{eqn:log} (which is not necessarily bounded) then
  $$p_n \le e^{-c \alpha_n n}.$$
\end{theorem}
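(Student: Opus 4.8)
The plan is to bound $p_n=\P(\exists\, v\in S^{n-1}: M_{n\times n}v=0)$ by a Rudelson--Vershynin type dichotomy, splitting the candidate kernel vectors into ``structured'' and ``spread'' ones. One warning at the outset: unlike the injectivity problem over $\Z$, this cannot be reduced to a single residue field, because for an $\alpha_n$-balanced $\xi$ the matrix $M_{n\times n}$ is singular over $\mathbf{F}_p$ with probability bounded below by a positive constant; so the argument is run over $\Q$ (equivalently over $\R$ on the sphere), or, as in the surjectivity part of the paper, prime by prime. Denote the columns of $M$ by $X_1,\dots,X_n$ and set $H_i=\Sp_\R(X_j:j\neq i)$. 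The two reductions I would set up first are: (i) if $M$ has a kernel vector supported on a set $T$ with $|T|\le\delta n$, then the tall submatrix $M_{[n]\times T}$ is not injective; and (ii) the Rudelson--Vershynin invertibility lemma: if $Mv=0$ with $v$ incompressible then $X_i\in H_i$ for every $i$ in the (large) support of $v$, so by Markov and symmetry the incompressible part of $p_n$ is at most $\tfrac1\delta\,\P(X_1\in H_1)$.

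For the structured part I would fix the support $T$, $|T|\le\delta n$, take a net over the unit vectors supported near $T$, and for each candidate $v$ estimate $\P(Mv=0)=\prod_{i}\P(\langle R_i,v\rangle=0)$ by Odlyzko's Lemma \ref{lemma:O} (which applies over $\R$ once one uses $\sup_x\P(\xi=x)\le 1-\alpha_n$, a consequence of $\alpha_n$-balancedness). The union bound over the $\binom{n}{\delta n}$ supports and the net costs $(C/\delta)^{\delta n}$, absorbed as long as $\delta$ is chosen small, possibly of order $\alpha_n/\log(1/\alpha_n)$ in the sparsest regime. The feature that is genuinely different from the dense case is that $M_{[n]\times T}$ can fail to be injective for purely combinatorial reasons (a zero column, or a small obstruction in the bipartite support graph); here one spends $\alpha_n\ge C_0\log n/n$ with $C_0$ large to make every such small obstruction have probability at most $e^{-c\alpha_n n}$, e.g. $\P(\exists\text{ zero column})\le n(1-\alpha_n)^n\le e^{-c\alpha_n n}$ once $\alpha_n n\ge C_0\log n$. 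Running the same estimate for $M^\top$ shows that, off an event of probability $e^{-c\alpha_n n}$, whenever $\dim H_1=n-1$ its primitive integral normal vector $a=a(X_2,\dots,X_n)$ is itself spread; one should also let ``structured'' mean not just ``few nonzero coordinates'' but ``few distinct coordinate values / small $\LCD$'', since an integral kernel or normal vector can concentrate in that way.

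The spread part is where I expect the real work. Conditionally on $X_2,\dots,X_n$ one needs $\P(\langle X_1,a\rangle=0\mid a)\le\sup_t\P(\langle X_1,a\rangle=t)$, and a soft Littlewood--Offord bound gives only $O((\alpha_n n)^{-1/2})$; that already yields a polynomial singularity bound, but the claimed $e^{-c\alpha_n n}$ cannot come from any ``one extra row'' estimate, since a single sparse row falls into a fixed hyperplane with probability of exactly that polynomial order. The exponential rate must come from using all $n$ rows/columns together and the full arithmetic strength of $\alpha_n$-balancedness --- precisely the content of the recast of Maples' corank estimate, Theorem \ref{theorem:corank'}, applied with corank $1$; it is this input that both upgrades the small-ball bound for $\langle X_1,a\rangle$ to $e^{-c\alpha_n n}$ once $a$ is spread and supplies the ``$a$ is spread'' statement used above. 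Two points make this the delicate step. First, $\xi$ may be unbounded with no moment assumptions, so one cannot control $\|M\|$ and cannot run a metric net argument; I would route the spread estimate through the Maples-type reduction modulo primes together with row/column swapping, which never invokes the operator norm. Second, one must propagate the dependence on $\alpha_n$ cleanly so that all union-bound losses $n^{O(1)}$ and all error terms coalesce into $e^{-c\alpha_n n}$ for a single absolute $c$, which is exactly where $\alpha_n n\ge C_0\log n$ is spent.
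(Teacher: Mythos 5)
Your opening warning---that the singularity problem cannot be reduced to a single residue field---is precisely where the proposal parts ways with the paper, and it is the wrong turn. The paper proves Theorem \ref{theorem:M} (as Corollary \ref{cor:sing}) by exactly such a reduction: pick one prime $p$ that grows with $n$, say $p\ge n^{2n}$ and large enough that the reduced distribution $\xi\bmod p$ remains $\alpha_n$-balanced. Since $\det M=0$ over $\Z$ forces $\det M\equiv 0\pmod p$, one gets $p_n\le\P\bigl(\operatorname{corank}(M/p)\ge 1\bigr)$, and Corollary \ref{cor:corank'} bounds $\P(\operatorname{corank}(M/p)=k)$ by $O\bigl(n^k(p^{-k^2}+e^{-c\alpha_n n})\bigr)$, in which the $p^{-k^2}$ term is negligible for $p$ this large; a union bound over $k$ together with Corollary \ref{cor:fullrank} for large corank finishes. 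Your observation that singularity over a \emph{fixed small} $\F_p$ has probability bounded below is correct but beside the point, because $p$ is not fixed. No real or spherical analysis enters the paper's proof at all.

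The Rudelson--Vershynin compressible/incompressible dichotomy over $\R$ that you build is therefore not the paper's route, and as you yourself begin to notice, it is not well adapted to this statement. With $\xi$ unbounded and no moment hypothesis, $\|M\|$ is uncontrolled, so the net step of the compressible estimate cannot start; and the bare union-over-supports bound with Odlyzko's lemma loses a factor $\binom{n}{\delta n}$ against a gain of only $e^{-\alpha_n n}$, which in the regime $\alpha_n\asymp\log n/n$ forces $\delta=O(1/n)$, not a constant. The paper's analogue, Proposition \ref{prop:sparse}, reaches constant $\delta$ only by splitting into a Littlewood--Offord range $t\gtrsim 1/\alpha_n$ (Theorem \ref{theorem:LO}) and a small-support combinatorial range handled by Claim \ref{claim:zeros}; your sketch gestures at the combinatorial obstructions but does not close this range. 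For the incompressible part you fall back, correctly, on Theorem \ref{theorem:corank'} with corank $1$ as the source of the exponential rate---but once you appeal to the full corank machinery and its mod-$p$ swapping lemmas, the RV scaffolding over $\R$ is doing no remaining work, and the argument simply collapses to the paper's. The efficient proof is to go directly: one large prime $p$, then union bound the corank estimate.
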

We notice that the recent paper \cite{BR} by Basak and Rudelson addressed the singularity (and in more general the least singular value) for  a general family of sparse matrices. Unfortunately, Theorem \ref{theorem:M} does not seem to follow from \cite{BR} because we have no restriction on the spectral norm of $M_{n\times n}$.

\section{Proof of  Theorem \ref{theorem:sur}} We assume that 
$$\P(|\xi| \le K_0) =1,$$
for some positive constant $K_0$. This assumption is only for  Theorem \ref{theorem:sur}, but not for Theorem \ref{theorem:M}.

A natural approach is to show that the equation system
$$M_{n \times m} \Bx = \Be_i,$$
has solutions $\Bx\in \Z^{m}$, for any standard unit vector $\Be_i=(0,\dots, 0,1,0,\dots, 0), 1\le i \le n$. However such an approach does not look simple as we would have to prove cancellation of extremely large numbers involving determinants of minors (see also the discussion around \eqref{eqn:det}). Instead, we will prove surjectivity by reducing our matrices over finite fields via the following result.

\begin{lemma}\cite[Lemma 5]{S1}\label{lemma:sur:equi} Let $m\ge n$. A matrix $M_{n \times m}: \Z^m \to \Z^n$ is surjective if and only if the modulo matrix $M_{n \times m}/p : \F_p^m \to \F_p^n$ is surjective for every prime $p$. Here $M_{n \times m}/p$ is the matrix over $\F_p$ given by $(M_{n \times m}/p)_{ij} = M_{ij} (\mod p)$.
\end{lemma}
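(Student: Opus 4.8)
The plan is to recast both surjectivity statements in terms of cokernels. Write $M:=M_{n\times m}$. Surjectivity of $M\colon\Z^m\to\Z^n$ is equivalent to the vanishing of the finitely generated abelian group $\cok(M)=\Z^n/M(\Z^m)$, while surjectivity of $M/p\colon\F_p^m\to\F_p^n$ is equivalent to the vanishing of $\cok(M/p)=\F_p^n/(M/p)(\F_p^m)$; so the lemma becomes the claim that $\cok(M)=0$ if and only if $\cok(M/p)=0$ for every prime $p$. The forward direction I would dispatch immediately: if $M$ is onto $\Z^n$, then the composite of $M$ with the reduction $\Z^n\to\F_p^n$ is onto, and since this composite factors through the reduction $\Z^m\to\F_p^m$, the induced map $M/p$ is onto $\F_p^n$.

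For the reverse direction I would invoke the Smith normal form: write $M=UDV$ with $U\in\mathrm{GL}_n(\Z)$, $V\in\mathrm{GL}_m(\Z)$, and $D$ the $n\times m$ diagonal matrix with entries $d_1\mid d_2\mid\dots\mid d_n$ down the main diagonal (some of the $d_i$ possibly $0$). Since $U$ and $V$ remain invertible over every $\F_p$, this single decomposition computes both $\cok(M)\cong\bigoplus_{i=1}^n\Z/d_i\Z$ and $\cok(M/p)\cong\bigoplus_{i=1}^n\F_p/(d_i\bmod p)$, with the convention $\Z/0\Z=\Z$ (here $m\ge n$ ensures the extra columns of $D$ are zero and contribute nothing). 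Thus $\cok(M/p)=0$ says precisely that $p\nmid d_i$ for all $i$, in particular that $d_i\neq0$; imposing this for every prime $p$ forces every $d_i$ to be a unit of $\Z$, i.e.\ $\cok(M)=0$, which is surjectivity of $M$ over $\Z$.

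I do not expect a real obstacle here; the one point worth care is that one genuinely needs \emph{all} primes, so that the hypothesis kills both the free rank and the torsion of $\cok(M)$ --- the Smith-form argument does both at once. If one prefers to avoid Smith normal form, the same conclusion follows in two steps: surjectivity mod a single prime $p$ already forces $\rank_\Q(M)=n$, since an $n\times n$ minor that is nonzero mod $p$ is in particular a nonzero integer, so $\cok(M)$ is finite; and if $\cok(M)\neq0$ then some prime $p$ divides its order, whence right-exactness of $-\otimes_\Z\F_p$ yields $\cok(M/p)\cong\cok(M)\otimes_\Z\F_p\neq0$, contradicting surjectivity mod $p$.
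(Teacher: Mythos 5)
Your argument is correct, and your primary (Smith normal form) route is genuinely different from the paper's. The paper only proves the nontrivial direction, and does so without any normal form: from surjectivity of $M/p$ it extracts an $n\times n$ submatrix $M'(p)$ (depending on $p$) with $\det(M'(p))\not\equiv 0\pmod p$; since $M'(p)(\Z^n)\subseteq M(\Z^m)\subseteq\Z^n$, the lattice covolume $d=|\Z^n/M(\Z^m)|$ is finite and divides $\det(M'(p))$, hence $p\nmid d$ for every $p$, forcing $d=1$. Your Smith-form argument instead computes $\cok(M)\cong\bigoplus_i\Z/d_i\Z$ and $\cok(M/p)\cong\bigoplus_i\F_p/(d_i\bmod p)$ from one decomposition $M=UDV$, which transparently handles both directions at once and makes the role of the elementary divisors explicit; the cost is invoking Smith normal form, where the paper's argument needs only the fact that the index of a full-rank sublattice equals the absolute value of a determinant. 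Your alternative (establishing finiteness of $\cok(M)$ via a nonzero minor mod $p$, then using right-exactness of $-\otimes_\Z\F_p$ to get $\cok(M/p)\cong\cok(M)\otimes_\Z\F_p$) is the closest in spirit to the paper's: both start from the same minor, but the paper finishes by direct divisibility $d\mid\det(M'(p))$ while you finish by the structure theorem applied to the finite group $\cok(M)$. All three routes are sound.
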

%As the proof of this lemma is short but important, we insert it here for convenience.
\begin{proof}(of Lemma \ref{lemma:sur:equi}) Assume that  $M_{n \times m}/p : \F_p^m \to \F_p^n$ is surjective, then $M_{n \times m}/p$ contains a submatrix $M_{n\times n}'=M_{n \times n}'(p )$ (depending on $p$) of size $n\times n$ such that $\det(M_{n \times n}'/p)\neq 0 (\mod p)$. Thus $\det(M_{n \times n}') \neq 0$. This implies that the columns of $M_{n \times n}'$ generate $\Q^n$, and hence $M_{n \times n}'(\Z^n)$ is a full-rank integer lattice. In particular, the lattice co--volume $d=|\Z^n/M_{n \times m}(\Z^m)|$ (which is independent of $p$) is finite and $d$ divides $\det(M_{n \times n}')$ for all $p$.  
  Now assume that $d \neq 1$. Then as $\det(M_{n \times n}') \neq 0 (\mod p)$, $d$ is not divisible by $p$. But this holds for all prime $p$, a contradiction.
\end{proof}
By this lemma, for our problem we need to show that $M_{n \times m}/p : \F_p^m \to \F_p^n$ is surjective (or equivalently, $M_{n \times m}/p$ has rank $n$ in $\F_p^n$) for {\it every} prime $p$. This does not seem to be an easier task, but in what follows we show that there is a way to restrict the treatment to a set of a only a few primes.

Our first ingredient is the following simple bound (see also \cite[Lemma 3.9]{Ng-repulsion}).

\begin{lemma}[quadratic estimate]\label{lemma:lowrank} Let $p$ be a prime. Let $0<\eps_n<1$ be a given parameter that might depend on $n$. Let $M_{n \times n}$ be a matrix of size $n\times n$ whose entries are iid copies of a random variable $\xi$ from \eqref{eqn:log}. Then the probability that $M_{n \times n}$ has rank at most $(1-\eps_n)n$ in $\F_p^n$ is smaller than $e^{- \alpha_n \eps_n^2 n^2 +n}$.
\end{lemma}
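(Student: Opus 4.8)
The plan is to bound the probability that $\rank_{\F_p}(M_{n\times n}) \le (1-\eps_n)n$ by conditioning row by row and tracking the dimension of the span of the rows seen so far. If the final rank is at most $(1-\eps_n)n$, then in particular, going through the rows $R_1,\dots,R_n$ one at a time, at least $\eps_n n$ of them must fail to increase the dimension of the running span; that is, there is a set $S\subset\{1,\dots,n\}$ with $|S|\ge \eps_n n$ such that for each $i\in S$, the row $R_i$ lies in the span of $R_1,\dots,R_{i-1}$. For any fixed proper subspace $V\subsetneq \F_p^n$, the conditional probability that a fresh row $R_i$ (iid $\alpha_n$-balanced entries) lands in $V$ is at most $(1-\alpha_n)^1$ — actually much smaller: writing $V$ as the kernel of some nonzero vector $w$, the event $R_i\in V$ forces $\langle R_i,w\rangle=0$, and by the balancedness condition \eqref{eqn:xi} applied in the coordinate where $w$ is nonzero, this has probability at most $1-\alpha_n$. (One can do slightly better using more nonzero coordinates of $w$, but $1-\alpha_n$ per bad row is already enough.)

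First I would fix the ``bad set'' $S$ and bound the probability that all rows indexed by $S$ are linearly dependent on their predecessors. Conditioning successively on $R_1,\dots,R_{i-1}$, at the moment we examine a bad row $i\in S$ the span of the previous rows is some subspace $V_i$ of dimension at most $n-1$ (since not all $n$ rows can be dependent, or rather we only need $i-1 < n$), so the conditional probability of $R_i\in V_i$ is at most $1-\alpha_n \le e^{-\alpha_n}$. Multiplying over the at least $\eps_n n$ indices in $S$ gives a bound of $e^{-\alpha_n \eps_n n}$ for a fixed $S$. Then I would union-bound over all choices of $S$: there are at most $2^n = e^{n\log 2} \le e^n$ of them. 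This yields
\[
  \P\big(\rank_{\F_p}(M_{n\times n}) \le (1-\eps_n)n\big) \le 2^n \, e^{-\alpha_n \eps_n n} \le e^{-\alpha_n \eps_n n + n}.
\]
This is already a clean bound, but it is weaker than the claimed $e^{-\alpha_n \eps_n^2 n^2 + n}$ — the claimed bound has $\eps_n^2 n^2$ in the exponent, which is quadratically stronger.

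To recover the quadratic gain, the key observation is that the conditional probability that a bad row lies in $V_i$ is not merely $1-\alpha_n$ but roughly $(1-\alpha_n)^{\codim V_i}$ — or more precisely, one should track how far along we are. Here is the refinement I would carry out: when we process bad row $i\in S$, the running span $V_i$ has dimension equal to the number of non-bad rows among $R_1,\dots,R_{i-1}$, which for the $j$-th element of $S$ (in increasing order) is at least $(j-1)$-many... no, better: since the total number of bad rows is at least $\eps_n n$ but we also know at the end the rank is at most $(1-\eps_n)n$, every bad row $i$ lies in a space $V_i$ of dimension $\le (1-\eps_n)n$, hence of codimension $\ge \eps_n n$. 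Writing $V_i$ as an intersection of $\codim V_i$ hyperplanes $\{x: \langle x, w\rangle = 0\}$ with the $w$'s linearly independent, and using that the corresponding linear forms evaluated at a fresh balanced row are ``jointly spread out'' — concretely, one can find $\ge \eps_n n$ coordinates where successive pivots live, so that $\P(R_i\in V_i \mid R_1,\dots,R_{i-1}) \le (1-\alpha_n)^{\eps_n n} \le e^{-\alpha_n \eps_n n}$. Multiplying over the $\ge \eps_n n$ bad rows gives $e^{-\alpha_n \eps_n^2 n^2}$, and the union bound over $S$ costs only the $e^n$ factor, yielding the stated $e^{-\alpha_n \eps_n^2 n^2 + n}$.

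The main obstacle is the step asserting that $\P(R_i\in V_i \mid \text{past}) \le (1-\alpha_n)^{\codim V_i}$ for a subspace of large codimension: one must choose, for the subspace $V_i$ of codimension $k:=\codim V_i$, a set of $k$ coordinates and an ``echelon'' basis of $V_i^\perp$ supported so that revealing the entries of $R_i$ one coordinate at a time, each of the $k$ linear constraints becomes a genuine atom-probability constraint of the form $\P(\xi = \text{fixed value}) \le 1-\alpha_n$ on a fresh coordinate, conditionally independent of the others. This is a standard Gaussian-elimination / pivoting argument: put $V_i^\perp$ in reduced row echelon form, let $j_1 < \dots < j_k$ be the pivot columns, reveal the non-pivot coordinates of $R_i$ first, and then each pivot coordinate $R_i(j_\ell)$ is forced to equal a fixed (already-determined) value for $R_i$ to satisfy the $\ell$-th constraint; balancedness bounds each such conditional probability by $1-\alpha_n$, and the product telescopes. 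Everything else — the union bound, the passage from $2^n$ to $e^n$, the reduction to a bad set $S$ — is routine.
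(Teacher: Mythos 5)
Your proof is correct and takes essentially the same route as the paper: pick a subset of size about $\eps_n n$ worth of ``dependent'' vectors, observe that each one must land in a subspace of codimension at least $\eps_n n$, apply Odlyzko's bound $(1-\alpha_n)^{\codim}$ to each, multiply, and union-bound over the $\le 2^n$ choices of subset. The paper works with columns and you with rows, which is immaterial, and the ``echelon/pivot'' argument you sketch for bounding $\P(R_i \in V)$ by $(1-\alpha_n)^{\codim V}$ is exactly the proof of the paper's Lemma~\ref{lemma:O}, which the paper simply cites.

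One small wrinkle in your exposition: you justify $\codim V_i \ge \eps_n n$ by appealing to ``we know at the end the rank is at most $(1-\eps_n)n$,'' but in a chain-rule/row-by-row conditioning the final rank is not yet revealed when you bound the probability for row $i$. The bound is nevertheless correct, for a slightly different reason: given that all bad rows among $R_1,\dots,R_{i-1}$ lie in the span of their predecessors, $V_i = \Sp\{R_j : j < i,\ j \notin S\}$, and $|\{j < i : j \notin S\}| \le n - |S|$ (equivalently, $|S \cap \{i,\dots,n\}| \le n-i+1$, which is automatic), so $\dim V_i \le n - |S| \le (1-\eps_n)n$. The paper sidesteps this entirely by fixing the complementary set — the $d = \lfloor (1-\eps_n)n\rfloor$ columns that span the column space — and conditioning on them once, so that the span $H$ is a fixed subspace of dimension $\le d$ and each of the remaining $n-d$ columns independently lands in $H$ with probability $\le (1-\alpha_n)^{n-d}$. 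That ``condition on the good set, not sequentially on the prefix'' formulation is cleaner and avoids the counting subtlety, but both versions give the stated $e^{-\alpha_n\eps_n^2 n^2 + n}$.
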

To prove this result we rely on a useful result by Odlyzko \cite{KKS}.
\begin{lemma}\label{lemma:O} Let $H$ be a subspace of dimension $1\le d\le n$ in $\F_p^n$. Then if $X$ is a random vector whose entries are iid copies of a random variable $\xi$ from \eqref{eqn:log}, then
  $$\P(X\in H) \le (1-\alpha_n)^{n-d}.$$
\end{lemma}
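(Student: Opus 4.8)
The plan is to exploit that $H$ has small codimension and to reveal the coordinates of $X$ in a carefully chosen order, so that most of them are forced to take a prescribed value. First I would realize $H$ as a kernel: since $\dim H = d$, there is a matrix $A \in \F_p^{(n-d)\times n}$ of full row rank $n-d$ with $H = \{x\in\F_p^n : Ax = 0\}$. Because $A$ has rank $n-d$, some $n-d$ of its columns are linearly independent; I would fix a set $T \subseteq \{1,\dots,n\}$ with $|T| = n-d$ indexing such columns, write $S = \{1,\dots,n\}\setminus T$ (so $|S| = d$), and let $A_T$, $A_S$ denote the corresponding column blocks of $A$, with $A_T$ invertible. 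Then for $x \in \F_p^n$ one has $Ax = 0 \iff A_T x_T = -A_S x_S \iff x_T = -A_T^{-1}A_S x_S$; that is, once the coordinates $x_S$ are specified, the coordinates $x_T$ are uniquely determined.

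Next I would condition on the coordinates $(X_i)_{i\in S}$. After this conditioning the vector $v := -A_T^{-1}A_S X_S \in \F_p^T$ is deterministic, and the event $\{X \in H\}$ coincides with the event that $X_i \equiv v_i \pmod p$ for every $i\in T$. Since the entries of $X$ are independent and $(X_i)_{i\in T}$ is independent of $(X_i)_{i\in S}$, this yields
$$\P\big(X\in H \mid (X_i)_{i\in S}\big) = \prod_{i\in T}\P\big(\xi \equiv v_i \pmod p\big) \le \prod_{i\in T}\Big(\max_{x\in\Z/p\Z}\P(\xi = x)\Big) \le (1-\alpha_n)^{|T|},$$
where the middle inequality holds for every fixed residue and the last one is the $\alpha_n$-balanced hypothesis \eqref{eqn:xi}. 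Taking expectations over $(X_i)_{i\in S}$ and using $|T| = n-d$ gives $\P(X\in H)\le(1-\alpha_n)^{n-d}$, as desired.

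The argument is completely elementary, so there is no genuine obstacle; the only points that require care are that the $T$-coordinates must be revealed \emph{after} the $S$-coordinates (otherwise the conditional independence used in the product formula is not justified), and that passing to $\F_p$ is harmless, since $\P(\xi \equiv c \pmod p) \le \max_{x\in\Z/p\Z}\P(\xi=x)$ for every residue $c$. An alternative, essentially equivalent, route is to choose $S$ to index $d$ linearly independent columns of a basis matrix of $H$, so that the coordinate projection $H\to\F_p^S$ is an isomorphism, and then to express each coordinate outside $S$ as a linear function of the coordinates in $S$; I would prefer the kernel formulation above because it makes the ``free then forced'' order of revelation most transparent.
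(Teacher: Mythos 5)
Your argument is correct and is essentially the paper's own proof in its dual (kernel) formulation: both arguments condition on $d$ ``free'' coordinates and observe that the remaining $n-d$ coordinates of any vector in $H$ are then forced to specific residues, each with probability at most $1-\alpha_n$. The paper works from a basis of $H$ and chooses $d$ coordinates on which the basis restricts to linearly independent vectors, while you work from a matrix whose kernel is $H$ and choose $n-d$ coordinates on which that matrix is invertible, but these are the same decomposition viewed from opposite sides, as you yourself note at the end.
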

We insert a proof of this well-known result here for completion.
\begin{proof}[Proof of Lemma \ref{lemma:O}] 
 % \corE{
  Let $\left\{ H_1, H_2, \dots, H_d \right\}$ be a basis for $H.$
  By permuting coordinates, we may assume without loss of generality that the restrictions 
  $\tilde H_1, \tilde H_2, \dots, \tilde H_d$ of these vectors to the first $d$ coordinates are again linearly independent.
  Consider the event $X=(\xi_1,\dots,\xi_d, \xi_{d+1},\dots, \xi_n)^t \in H$. 
  From the linear independence of $\left\{ \tilde H_i \right\}_1^d,$ there are unique $c_1,c_2,\dots,c_d \in \F_p$ so that
  \[
    (\xi_1,\dots, \xi_d)^t = \sum_{i=1}^d c_i \tilde H_i.
  \]
  Hence conditioning on $(\xi_1,\dots, \xi_d)$, if $X=(\xi_1,\dots, \xi_d, \xi_{d+1}, \dots, \xi_n)^t \in H$ then
  \[
    X = \sum_{i=1}^d c_i H_i.
  \]
  In particular, each value $\xi_{d+1}, \xi_{d+2}, \dots, \xi_n$ is determined.
  However the probability of each of these events is at most $1-\alpha_n$, and so by independence the event $X \in H$  holds with probability at most $(1-\alpha_n)^{n-d}$. 
  % }
\end{proof}
Now we turn to the quadratic estimate. 
\begin{proof}[Proof of Lemma \ref{lemma:lowrank}] Let $d=\lfloor (1-\eps_n)n\rfloor$. Assume that the columns $X_{i_1},\dots, X_{i_d}$ span the column space of $M_{n \times n}$. For now assume that $\{i_1,\dots, i_d\} =\{1,\dots, d\}$. Let $H$ be the subspace spanned by $X_1,\dots, X_d$. We are considering the event $\CE_{1,\dots, d}$ that $X_i \in H, d+1 \le i \le n$. By Lemma \ref{lemma:O}, for any $i\ge d+1$, 
  $$\P(X_i \in H) \le (1-\alpha_n)^{n-d} \le e^{- \alpha_n \eps_n n}.$$
  Applying this bound for $d+1\le i\le n$ and using independence we obtain
  $$\P_{X_i, d+1\le i\le n}\Big(\CE_{1,\dots, d} | X_1,\dots, X_d\Big) = \P_{X_i, d+1\le i\le n} \Big(X_{d+1}, \dots, X_n \in H | H\Big) \le e^{- \alpha_n \eps_n^2 n^2}.$$
  Taking the union bound over at most $2^n$ choices of $\{i_1,\dots, i_d\}$ we conclude the proof.
\end{proof}

\subsection{A simpler result} To get the main idea, in this subsection we show:

\begin{theorem}\label{theorem:simple}
  For every $A>0$, there exists sufficiently large $B$ such that 
  $$\P\Big(\cok(M_{n \times (n+u)}) \simeq \{id\}\Big) \ge 1 -n^{-A},$$ 
  where 
  $$u=\left\lfloor B \frac{\log^2n}{\alpha_N} +  \sqrt{ \frac{n\log n}{\alpha_n}} \right\rfloor.$$
\end{theorem}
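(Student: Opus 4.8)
The plan is to run the modular criterion of Lemma~\ref{lemma:sur:equi} while keeping careful track of which primes can actually obstruct surjectivity. Since $|\xi|\le K_0$ almost surely, Hadamard's inequality bounds every nonzero $n\times n$ minor of $M=M_{n\times(n+u)}$ by $(K_0\sqrt n)^n$; consequently, as soon as $M$ has rank $n$ over $\Q$ (equivalently, some $n\times n$ minor is nonzero --- which is in any case necessary for surjectivity over $\Z$, since otherwise $\Z^n/M(\Z^{n+u})$ is infinite) no prime larger than $(K_0\sqrt n)^n$ can obstruct, and in fact once one concrete nonsingular submatrix $M_I$ has been exhibited the only candidates are the prime divisors of $\det M_I$, at most $O(n\log n)$ of them. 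So it suffices to (a) produce such an $M_I$ and (b) show $\operatorname{rank}(M/p)=n$ for each $p$ in the resulting list of at most $O(n\log n)$ primes.

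Organize the columns so that $M_1$ denotes the first $n$ of them, a square matrix. Applying the quadratic estimate (Lemma~\ref{lemma:lowrank}) with $\eps_n=k/n$ gives, for any fixed prime $p$, $\P(\operatorname{corank}(M_1/p)>k)\le e^{-\alpha_n k^2+n}$; Odlyzko's lemma (Lemma~\ref{lemma:O}) applies verbatim over $\Q$, so the same argument yields $\P(\operatorname{corank}_\Q(M_1)>k_\infty)\le e^{-\alpha_n k_\infty^2+n}$. Choose $k=\big\lceil\sqrt{n\log n/\alpha_n}\,\big\rceil$ and $k_\infty=\big\lceil\sqrt{Cn/\alpha_n}\,\big\rceil$ with $C=C(A)$ large. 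A union bound over all primes $p\le(K_0\sqrt n)^n$ --- there are at most $(K_0\sqrt n)^n=e^{\frac12 n\log n+O(n)}$ of them, and this is exactly what $\alpha_n k^2=n\log n$ beats --- together with the bound over $\Q$ shows that the event $\mathcal E_1$ that $\operatorname{corank}(M_1/p)\le k$ for all such $p$ and $\operatorname{corank}_\Q(M_1)\le k_\infty$ holds with probability $1-O(n^{-A})$. This is where the $\sqrt{n\log n/\alpha_n}$ term of $u$ comes from.

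Condition on $M_1$ on the event $\mathcal E_1$; the other $u$ columns remain i.i.d. Reveal them one at a time and let $\tau$ be the first time that $M_1$ together with the first $\tau$ revealed columns attains rank $n$ over $\Q$. While that rank is $n-j<n$, Odlyzko's lemma over $\Q$ gives that the next column lies outside the current span with probability at least $1-(1-\alpha_n)^j\ge\alpha_n$, so starting from a corank of at most $k_\infty$ one obtains $\tau=O\big(k_\infty+\log n/\alpha_n\big)$ except with probability $O(n^{-A})$. At time $\tau$ there is a nonsingular $n\times n$ submatrix $M_I$ among the revealed columns, hence the set $\mathcal P$ of primes that can obstruct surjectivity of $M$ lies inside the set of prime divisors of $\det M_I$, with $|\mathcal P|=O(n\log n)$; and for each $p\in\mathcal P$, $\mathcal E_1$ gives $\operatorname{corank}(M_1/p)\le k$, so the mod-$p$ corank of the matrix revealed by time $\tau$ is also at most $k$. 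Since $\tau$ is a stopping time, the columns revealed after time $\tau$ are independent of $(M_1,\tau,M_I,\mathcal P)$, and these fresh columns are used in the final step. Fix $p\in\mathcal P$: while the mod-$p$ rank is $<n$, a fresh column raises it with probability at least $1-(1-\alpha_n)^{\operatorname{corank}}$, so the number of fresh columns needed to drive the mod-$p$ corank from $\le k$ down to $0$ is stochastically dominated by a sum $S$ of independent geometric variables with success probabilities $1-(1-\alpha_n)^j$, $j=1,\dots,k$. Here $\E S=O\big(k+\tfrac1{\alpha_n}\log\tfrac1{\alpha_n}\big)$, and a Bernstein-type tail bound for sums of sub-exponential geometrics gives $\P\big(S>O(k+\log n/\alpha_n)\big)\le n^{-A-2}$; a union bound over the $O(n\log n)$ primes of $\mathcal P$ completes the argument. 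Altogether one may take $u$ of size $O\big(\sqrt{n\log n/\alpha_n}+\log^2 n/\alpha_n\big)$, which has the form in the statement once $B$ is chosen large, with total failure probability $O(n^{-A})$.

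The hard part is the final incremental step in the sparse regime, say $\alpha_n\asymp(\log n)/n$. The naive bound --- treating each fresh column as an independent $\mathrm{Bernoulli}(\alpha_n)$ success and requiring $k$ successes --- would demand $\gtrsim k/\alpha_n\approx n^2/\log n$ columns, far beyond the budget. Instead one must exploit the much larger gain $1-(1-\alpha_n)^j$ available when the corank $j$ is large: the corank drops essentially at constant rate while $j\gtrsim 1/\alpha_n$, so the real cost is confined to its last $O(1/\alpha_n)$ units and amounts to only $O\big(\tfrac1{\alpha_n}\log\tfrac1{\alpha_n}\big)$ columns --- but turning this heuristic into a tail bound requires controlling a sum of geometric variables with wildly varying and tiny success probabilities. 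A secondary, organizational difficulty is keeping the batches of columns independent: $\mathcal E_1$ depends on $M_1$, the list $\mathcal P$ depends on $M_1$ together with the columns up to the stopping time $\tau$, and the final step needs columns independent of both; this is exactly why the columns are revealed sequentially and the strong Markov property is invoked at $\tau$.
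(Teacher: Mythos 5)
Your proof is correct but takes a genuinely different, and arguably more self-contained, route than the paper's. The paper conditions on $\det(M_{n\times n})\neq 0$ via Theorem~\ref{theorem:M} (the singularity estimate, whose proof occupies the entire appendix) in order to cap the number of potentially obstructing primes at $n^2$; you instead permit the first $n$ columns $M_1$ to be rational-rank-deficient, apply the quadratic estimate over $\Q$ in addition to mod $p$, and run a stopping-time argument that locates a nonsingular $n\times n$ minor $M_I$ within the first $O(k_\infty+\log n/\alpha_n)$ extra columns, so that the bad primes are just the divisors of $\det M_I$. This bypasses Theorem~\ref{theorem:M} entirely, a genuine simplification for a warm-up result. (A small redundancy: since $\operatorname{corank}_{\Q}(M_1)\le\operatorname{corank}(M_1/p)$ for every prime $p$, the $\Q$-corank part of $\mathcal E_1$ is already implied by the mod-$p$ part; the separate $k_\infty$ bound is harmless and gives a tighter constant.) The second phase also differs: the paper exposes columns in adaptively sized batches with $k_i\approx B\log n/(\alpha_n d_{i-1})$ and union-bounds over batches and primes, giving $\sum_i k_i=O\bigl(\tfrac{\log n}{\alpha_n}\log d_0+d_0\bigr)$, whereas you stochastically dominate the number of fresh columns by a sum $S$ of independent geometrics with success probabilities $1-(1-\alpha_n)^j$, $j=1,\dots,k$, and invoke a Bernstein-type tail, yielding the slightly sharper $O(k+\tfrac{\log n}{\alpha_n})$. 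The one loose end is precisely that tail bound: it is asserted rather than proved. It is true and standard --- the sub-exponential scales of the geometrics have squared sum $O(\alpha_n^{-2}+k)$ and maximum $O(\alpha_n^{-1})$, so a deviation of $t=C_A\log n/\alpha_n$ gives probability $n^{-A-2}$ --- but it is the single ingredient in your argument that goes beyond Odlyzko plus union bounds, so it deserves a short proof or a citation; the paper's batching is quantitatively clumsier but needs nothing more than union bounds. The rest of the bookkeeping (the event $\mathcal E_1$ from the quadratic estimate and the prime-counting, the strong Markov property at $\tau$, the Hadamard bound giving $|\mathcal P|=O(n\log n)$, and the final union bound over $\mathcal P$) all checks out, and the resulting $u=O\bigl(\sqrt{n\log n/\alpha_n}+\log n/\alpha_n\bigr)$ is in fact slightly better than the paper's $O\bigl(\sqrt{n\log n/\alpha_n}+\log^2 n/\alpha_n\bigr)$.
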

Note that $u$ does not drop below $\sqrt{n \log n}$. 

In what follows we prove Theorem \ref{theorem:simple}. The same argument will also be used to deal with matrices of dependent entries. Let $\CP_n$ be the set of primes up to  $(K_0n)^{n/2}$ 
\begin{equation}\label{eqn:P_n}
  \CP_n :=\Big\{ p \mbox{ prime },  p\le (K_0n)^{n/2}\Big\}.
\end{equation}
By taking the union bound, Lemma \ref{lemma:lowrank} then implies:
\begin{corollary}\label{cor:lowrank:simple} Let $\CE$ be the event that the matrix $M_{n\times n}$ has rank at least $(1-\eps_n)n$ in $\F_p^n$ for all $p\in \CP_n$. Then
  $$\P(\CE) \ge 1-e^{- \alpha_n \eps_n^2 n^2 +n \log n+n+ n \log K_0/2}.$$
\end{corollary}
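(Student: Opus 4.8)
Corollary \ref{cor:lowrank:simple} follows immediately from Lemma \ref{lemma:lowrank} by a union bound over the primes in $\CP_n$, using the prime-counting estimate $|\CP_n| \le (K_0 n)^{n/2}$.

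The plan is as follows. Fix the target probability exponent $A$, and recall from Lemma \ref{lemma:lowrank} that for a single prime $p$, the probability that $M_{n\times n}$ has rank at most $(1-\eps_n)n$ in $\F_p^n$ is at most $e^{-\alpha_n \eps_n^2 n^2 + n}$. Since $\CP_n$ consists of primes at most $(K_0 n)^{n/2}$, there are at most $(K_0 n)^{n/2}$ of them, so $\log|\CP_n| \le \frac{n}{2}(\log n + \log K_0)$. A plain union bound over all $p\in \CP_n$ then bounds the probability of the complementary event $\CE^c$ by
\[
  |\CP_n| \cdot e^{-\alpha_n \eps_n^2 n^2 + n} \le e^{-\alpha_n \eps_n^2 n^2 + n + \frac{n}{2}(\log n + \log K_0)} = e^{-\alpha_n \eps_n^2 n^2 + n\log n + n + n\log K_0 / 2},
\]
where I have crudely absorbed $\frac{n}{2}\log n \le n\log n$ into the stated exponent to match the form in the corollary. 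This is exactly the claimed bound.

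There is essentially no obstacle here: the only thing to verify is the counting bound $|\CP_n| \le (K_0 n)^{n/2}$, which is trivial since every prime in $\CP_n$ is a distinct positive integer bounded by $(K_0 n)^{n/2}$, and the arithmetic of combining the exponents. The one point worth a remark is that the bound is only meaningful when $\alpha_n \eps_n^2 n^2$ dominates $n\log n$, i.e.\ when $\eps_n \gg \sqrt{(\log n)/(\alpha_n n)}$; this is precisely the regime in which $\eps_n$ will be chosen later (explaining why $u$ in Theorem \ref{theorem:simple} cannot be taken below order $\sqrt{n\log n/\alpha_n}$). Since the statement of the corollary is just an inequality for $\P(\CE)$ with no constraint on $\eps_n$, no such restriction needs to appear in the proof itself.

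Thus the proof reduces to one line of union bound plus one line of exponent bookkeeping, and the ``hard part'' — the quadratic gain $\eps_n^2 n^2$ in the exponent — has already been supplied by Lemma \ref{lemma:lowrank}, which in turn rests on Odlyzko's Lemma \ref{lemma:O}.
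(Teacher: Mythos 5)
Your proof is correct and is exactly the argument the paper gives (the paper simply writes ``By taking the union bound, Lemma \ref{lemma:lowrank} then implies'' without spelling out the arithmetic). The only thing you do that the paper leaves implicit is the bookkeeping $|\CP_n|\le (K_0 n)^{n/2}$ and the slack $\tfrac{n}{2}\log n \le n\log n$ used to match the stated exponent, both of which you handle correctly.
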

Set
$$\eps_n := \sqrt{\frac{ 3\log n}{ \alpha_n n}}.$$
With this value of $\eps_n<1$, Corollary \ref{cor:lowrank:simple} implies
$$\P(\CE) \ge 1- e^{-n \log n}.$$

\begin{lemma}[surjectivity for small primes]\label{lemma:sur:smallp} For any $A$ there is a $B$ sufficiently large so that with probability at least $1-n^{-A}$, the random matrix $M_{n \times (n+u)}$ is surjective over $\F_p^n$ for all $p\in \CP_n$ simultaneously, and $u=u(B)$ is as in Theorem \ref{theorem:simple}.
\end{lemma}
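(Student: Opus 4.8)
The plan is to reduce surjectivity over $\F_p^n$ for a single prime $p$ to a union bound over "bad" subspaces, then sum over all $p \in \CP_n$. First I would condition on the event $\CE$ from Corollary \ref{cor:lowrank:simple}, which holds with probability $\ge 1 - e^{-n\log n}$; on $\CE$, for every $p \in \CP_n$ the first $n$ columns of $M_{n\times(n+u)}$ already span a subspace $H_p \subseteq \F_p^n$ of codimension at most $\eps_n n$. So the matrix fails to be surjective over $\F_p^n$ only if \emph{all} of the remaining $u$ columns $X_{n+1},\dots,X_{n+u}$ land in some proper subspace containing $H_p$ — more precisely, the column space after adding the extra columns still misses $\F_p^n$. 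The key point is that conditioned on the first $n$ columns, the event that the full column space is contained in a fixed proper subspace $V \supseteq H_p$ (of codimension $j \ge 1$) forces each of $X_{n+1},\dots,X_{n+u}$ into $V$, which by Lemma \ref{lemma:O} has probability at most $(1-\alpha_n)^{ju} \le e^{-\alpha_n j u}$.

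Next I would run the union bound over the choice of the target subspace. A subspace $V \subseteq \F_p^n$ of codimension $j$ containing $H_p$ corresponds to a subspace of codimension $j$ in the quotient $\F_p^n/H_p$, which has dimension $\le \eps_n n$; the number of such $V$ is at most the number of $j$-codimensional subspaces of $\F_p^{\eps_n n}$, which is crudely bounded by $p^{j \eps_n n} \le (K_0 n)^{n \cdot j\eps_n n/2}$ (since $p \le (K_0 n)^{n/2}$). Hence for a fixed $p$,
\[
  \P\big(M_{n\times(n+u)}/p \text{ not surjective} \mid \CE\big) \le \sum_{j \ge 1} (K_0 n)^{j\eps_n n^2/2} e^{-\alpha_n j u}
  \le \sum_{j\ge 1} \exp\!\big(j(\tfrac{\eps_n n^2}{2}\log(K_0 n) - \alpha_n u)\big).
\]
With $\eps_n = \sqrt{3\log n/(\alpha_n n)}$, the first term in the exponent is $O(\sqrt{n\log n/\alpha_n}\cdot \log n) = O(\sqrt{n\log^3 n/\alpha_n})$, so taking $u \ge B(\log^2 n/\alpha_n + \sqrt{n\log n/\alpha_n})$ with $B$ large makes $\alpha_n u$ dominate by a factor growing in $B$ and $n$; the geometric series then sums to $e^{-\Omega(\alpha_n u)}$, comfortably $\le \tfrac12 n^{-A-1}$ per prime once $B = B(A, C_0)$ is large enough (here I use $\alpha_n \ge C_0\log n/n$, so $\alpha_n u \ge B C_0 \log^2 n \gg (A+1)\log n$).

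Finally I would take the union bound over $p \in \CP_n$. The number of primes in $\CP_n$ is at most $(K_0 n)^{n/2}$, but this is killed either by absorbing $n\log(K_0 n)$ into the exponent above (note $\alpha_n u \ge B\log^2 n$ dominates $n\log(K_0 n)$ only if... — careful here) or, more robustly, by noting that we never actually needed surjectivity for \emph{every} prime separately with a naive union bound: on $\CE$ the cokernel order $d = |\Z^n/M(\Z^{n+u})|$ divides $\det(M'_{n\times n})$ for a nonsingular $n\times n$ minor, hence $d \le (K_0 n)^{n}$ has at most $O(n^2)$ prime divisors counted with multiplicity, so surjectivity fails only if $M/p$ is non-surjective for one of those $O(n^2)$ "relevant" primes — and for each such prime the failure probability bound above still applies after conditioning, giving a final $O(n^2)\cdot n^{-A-1} + e^{-n\log n} \le n^{-A}$ (adjusting $A$). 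The main obstacle is precisely this last bookkeeping: making the per-prime bound strong enough to survive either a union over all primes up to $(K_0 n)^{n/2}$ or, preferably, restricting cleanly to the finitely many primes dividing the (a priori enormous) cokernel order, and checking that the $\eps_n n^2 \log n$ loss from the subspace count is genuinely beaten by $\alpha_n u$ for the stated $u$ — this is where the $\sqrt{n\log n/\alpha_n}$ term in $u$ and the choice of $\eps_n$ must be balanced.
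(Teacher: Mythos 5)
There is a genuine gap: your per-prime estimate does not close for the stated $u$, and the miscalculation hides it. You bound the number of codimension-$j$ subspaces $V\supseteq H_p$ by $p^{j\eps_n n}\le (K_0n)^{j\eps_n n^2/2}$, so the positive term in your exponent is $\tfrac{j\eps_n n^2}{2}\log(K_0n)$; with $\eps_n=\sqrt{3\log n/(\alpha_n n)}$ this is of order $j\,n^{3/2}\sqrt{\log n/\alpha_n}\,\log n$, not the $O(\sqrt{n\log^3 n/\alpha_n})$ you wrote (you evaluated $\eps_n n\log n$ instead of $\eps_n n^2\log n$, losing a factor of $n$). Meanwhile, for $u=B(\log^2 n/\alpha_n+\sqrt{n\log n/\alpha_n})$ one only has $\alpha_n u\le B(\log^2 n+\sqrt{n\log n})$, which is smaller than the subspace-count term by a factor of roughly $n\log n$, so the geometric series diverges rather than being $\le \tfrac12 n^{-A-1}$. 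The failure is structural, not just arithmetic: any argument that enumerates target subspaces over $H_p$ pays at least $d_0\log p$ in the exponent, and since $\log p$ can be as large as $\tfrac{n}{2}\log(K_0 n)$ for $p\in\CP_n$ (even for $p\in\CP^\ast$, one only gets $d_0\log p\lesssim n\log n$ from divisibility of the determinant), no choice of $u$ of size $o(n\log n/\alpha_n)$ can beat it. Your restriction to the $O(n^2)$ primes dividing the determinant of a nonsingular minor is exactly the paper's $\CP^\ast$ device and is fine, but it only controls the number of primes, not the per-prime cost.

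The paper avoids subspace enumeration entirely: conditioning on $\CE$ and $\CE_{\neq 0}$ and fixing $p\in\CP^\ast$, it exposes the extra columns adaptively in groups of size $k_i=\lceil B\log n/(\alpha_n d_{i-1})\rceil$, where $d_{i-1}$ is the codimension of the span $H_{i-1}$ of the columns exposed so far. Since $H_{i-1}$ is measurable with respect to the already-exposed columns, Lemma \ref{lemma:O} applies conditionally and gives that some column in the $i$-th group leaves $H_{i-1}$ except with probability $((1-\alpha_n)^{d_{i-1}})^{k_i}\le n^{-B}$; so each group strictly decreases the corank, and full rank is reached after $\sum_i k_i\le \tfrac{B\log n}{\alpha_n}\log d_0+d_0$ extra columns, which with $d_0\le\eps_n n=\sqrt{3n\log n/\alpha_n}$ matches the stated $u$. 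Only then is a union bound taken, over the at most $n^2$ primes in $\CP^\ast$, costing a polynomial factor. If you want to keep your framework, you would have to replace the union over all $V\supseteq H_p$ by this sequential tracking of the random column span (or accept a much larger $u$, of order $n\log n/\alpha_n$, which proves a weaker statement than Theorem \ref{theorem:simple} requires).
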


\begin{proof}(of Lemma \ref{lemma:sur:smallp}) It suffices to show that with high probability $M_{n \times (n+u) }$ has full rank in $\F_p^n$ (which would then imply surjectivity in $\F_p^n$). 

  We consider the submatrix $M_{n\times n}$, the restriction of $M_{n \times  (n+u) }$ to the first $n$ columns. Let $\CE$ be the event defined in Corollary \ref{cor:lowrank:simple}, i.e.\ that $M_{n\times n}$ has rank at least $(1-\eps_n)n$ over $\F_p^n$ for all $p\in \CP_n$. We thus have
  $$\P_{M_{n\times n}}(\CE) \ge 1- e^{-n \log n}.$$
  Consider also the event $\CE_{\neq 0}$ that $\det(M_{n\times n}) \neq 0$, where by Theorem \ref{theorem:M}
  $$\P(\CE_{\neq 0}) \ge 1-n^{-A},$$
  provided that $\frac{C_0 \log n}{n} \le \alpha$ for large $C_0$.

  Now we condition on $M_{n \times n}$ satisfying $\CE$ and $\CE_{\neq 0}$ and show that with high probability (with respect to the last $u$ columns) that $M_{n \times (n+ u)}$ is surjective over all $\F_p^n, p\in \CP_n$. 

  Let $\CP^\ast = \CP^\ast(M_{n\times n})$ be the collection of prime divisors of $\det(M_{n\times n})$. Because $|\det(M_{n\times n})| \le (K_0n)^{n/2}$ by the Hadamard bound, the random set $\CP^\ast$ has small size, say
  $$|\CP^\ast| \le n^2.$$

  {\bf Case 1.} When $p \in \CP_n$ but $p \notin \CP^\ast$, then $M_{n \times n}$ has full rank in $\F_p^n$, and so does $M_{n \times (n +u)}$.

  {\bf Case 2.} Consider $p \in \CP^\ast$, we estimate the probability of the event $\CE_p$ that $M_{n \times (n+u)}/p$ has full rank. 

  Let $H_0 \subset \F_p^n$ be the column subspace of $M_{n\times n}$, for which by assumption 
  $$d_0:=n-\dim(H_0) \le n-(1-\eps_n) n \le \sqrt{\frac{3n \log n}{\alpha_n}}.$$
%We will show that with high probability, by adding the remaining $O(n^{1/2+\delta})$ column vectors to $M_{n\times n}$, we will eventually obtain a full rank matrix $M_{n \times (n+C'u_\alpha)}$ in $\F_p^n$. 

  We next expose the remaning $u$ vectors in groups. For $1\le i\le d_0$, at step $i$ we will add $k_i$ column vectors $X_{n+\sum_{l=1}^{i-1}k_l+j}, 1\le j \le k_i$ to the set of already exposed column vectors $X_1,\dots, X_{n+\sum_{l=1}^{i-1}k_l}$, where 
  $$k_i:= \left\lceil  \frac{B\log n}{ \alpha d_{i-1}} \right\rceil,$$
  and where $d_{i-1}$ is the codimension of the subspace $H_{i-1}$ generated by $\lang X_1,\dots, X_{n+\sum_{l=1}^{i-1}k_l} \rang$. Notice that in this exposing process the choice of $k_i$ depends on $d_{i-1}$, a decreasing sequence throughout the process.

  Next let $\CF_i$ be the event that $\dim(H_i) \ge \dim(H_{i-1})+1$. In other words, $\CF_i$ is the event that after adding the vectors of group $i$ we have a strict decrease in the co-rank,
  $$d_i \le d_{i-1}-1.$$

  Assuming that $\dim(H_{i-1})<n$, then by Lemma \ref{lemma:O}, and by independence of the column vectors,  
  \begin{align*}
    \P\Big(\CF_i|\wedge_{j=0}^{i-1}\CF_{j} \wedge \CE \wedge \CE_{\neq 0}, \dim(H_{i-1})<n\Big) &\ge 1 -((1-\alpha_n)^{\codim(H_{i-1})})^{k_i}  \\
    & \ge 1 -((1-\alpha_n)^{d_{i-1}})^{k_i}\\
    & \ge 1-n^{-B}.
  \end{align*}
  By Bayes' rule, with probability at least $(1-n^{-B})^{d_0} \ge 1-n^{-B+1}$, after  adding 
  $\sum_i k_i \le  \sum_i \frac{B \log n}{\alpha_n d_i}+1 \le  \frac{B \log n}{\alpha_n} \log d_0 + d_0$ columns, the matrix $M_{n \times (n+  \frac{B \log n}{\alpha_n} \log d_0 + d_0)}$ has full rank in $\F_p^n$.

  Taking union bound over all primes $p\in \CP^\ast$, we obtain that with probability at least $1-n^{-B+3}$ the obtained matrix has full rank in $\F_p^n$ for all $p \in \CP^\ast$.

  By {\bf Case 1.} and  {\bf Case 2.}, we have seen that with $M_{n \times n}$ satisfying $\CE$ and $\CE_{\neq 0}$, the matrix $M_{n \times (n+ u)}$ is surjective simultaneously over $\F_p^n$ for all $p \in \CP_n$ with the desired probability. The proof is then complete  after unfolding the conditioning on $M_{n\times n}$ (using Corollary \ref{cor:lowrank:simple}).
\end{proof}

\begin{proof}(of Theorem \ref{theorem:simple}) We condition on the event $\CE_{\neq 0}$. Note that with probability one $|\det(M_{n \times n})| \le (K_0n)^{n/2}$. This shows that with prime $p> (K_0n)^{n/2}$, $\det(M_{n \times n}) \neq 0 (\mod p)$. Hence on $\CE_{\neq 0}$ the matrix $M_{n\times n}$ is surjective over $\F_p^n$ for all $p \ge (K_0n)^{n/2}$.

  Furthermore, Lemma \ref{lemma:sur:smallp} implies that with probability at least $ 1-n^{-A}$, for all $p\in \CP_n$ the random matrix $M_{n \times (n+ u)}$ is surjective over $\F_p^n$. Hence altogether our matrix $M_{n \times (n+u)}$ is surjective in $\Z^n$ by Lemma \ref{lemma:sur:equi}.
\end{proof}

%We remark that the step of conditioning on $M_{n\times n}$ and restricting the primes to $\CP^\ast$ is important because we then can take union bound over this restricted range of $p$. 

\subsection{Proof of Theorem \ref{theorem:sur}} Now we turn to our main theorem, where the proof is similar  but instead of Lemma \ref{lemma:O} we will be using the following result by Maples (see either \cite[Theorem 1.2]{M1} or \cite[Corollary 1.3]{M2}.) 

\begin{theorem}\label{theorem:corank'} Let $p$ be any prime. Assume that the entries of $M_{n\times n}$ are iid copies of $\xi$ from \eqref{eqn:xi} with $\alpha$ from \eqref{eqn:log}. Then for all $k\le \eta n$ with a sufficiently small absolute constant $\eta$ we have
  \begin{equation}\label{eqn:corank':bound}
    \P(\rank(M_{n \times n}/p) = n-k) = O\Big(n^k (p^{-k^2} + e^{-c \alpha n})\Big).
  \end{equation}
\end{theorem}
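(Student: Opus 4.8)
The plan is to recast Maples' argument while tracking the sparsity parameter $\alpha=\alpha_n$ everywhere. The starting observation is that if $\rank(M_{n\times n}/p)=n-k$ then one can pick a set $I\subset\{1,\dots,n\}$ of $k$ rows such that the complementary $(n-k)\times n$ submatrix $M_{I^{c}}$ (the rows outside $I$) has full rank $n-k$, and every row $R_i$ with $i\in I$ lies in the row space of $M_{I^{c}}$. Since the rows of $M$ are independent, after conditioning on $M_{I^{c}}$ the events $\{R_i\in\operatorname{rowsp}(M_{I^{c}})\}$, $i\in I$, are independent, each of probability $q(N):=\P(R\perp N)$, where $R$ is a generic row and $N:=\ker(M_{I^{c}})\subset\F_p^n$ is a $k$-dimensional subspace determined by $M_{I^{c}}$. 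Union-bounding over the $\binom nk\le n^k$ choices of $I$ — this is the source of the factor $n^k$ in \eqref{eqn:corank':bound} — reduces the theorem to proving
$$\E\Big[q(N)^k\,\indicator{\rank(M_{I^{c}})=n-k}\Big]=O\big(p^{-k^2}+e^{-c\alpha n}\big)$$
for one fixed $I$; note that there is no separate ``the rectangular part is degenerate'' term, since $I$ is chosen so that $M_{I^{c}}$ has full rank on the relevant event.

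The next step is a structural dichotomy for the random kernel $N$. Fix a small parameter $\delta$ (to be chosen in terms of $\alpha$ and $\log p$) and call $N$ \emph{degenerate} if it contains a nonzero vector with at most $\delta n$ nonzero coordinates, and \emph{spread} otherwise; split the expectation accordingly. For the degenerate contribution one uses $q(N)^k\le 1$ and bounds $\P(N\text{ degenerate},\,\rank M_{I^{c}}=n-k)$ by a union bound over the at most $\sum_{s\le\delta n}\binom ns p^{s}$ sparse vectors $w$, together with Odlyzko's bound $\P(M_{I^{c}}w=0)=\P(R\perp w)^{n-k}\le(1-\alpha)^{n-k}$ (Lemma~\ref{lemma:O}); choosing $\delta$ small enough that $\delta\log(p/\delta)\le c\alpha$ makes this at most $e^{-c\alpha n}$ (and when $p$ is so small that no admissible $\delta>0$ exists, the claimed bound already has $n^kp^{-k^2}\ge 1$ and is trivial). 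For the spread contribution, every nonzero $w\in N$ has at least $(1-\delta)n$ nonzero coordinates, and one computes $q(N)$ by Fourier inversion over $\F_p^k$: writing $W$ for a $k\times n$ matrix whose rows span $N$ and $\widehat\xi(t)=\E\,e^{2\pi i t\xi/p}$ for the characteristic function of an entry,
$$q(N)=p^{-k}\sum_{\theta\in\F_p^k}\prod_{i=1}^n\widehat\xi\big((W^{t}\theta)_i\big).$$
For $\theta\ne 0$ the vector $W^{t}\theta$ is a nonzero element of $N$, hence has $\ge(1-\delta)n$ nonzero coordinates, each contributing a factor of modulus $\le 1-c\alpha/p^{2}$; summing over the $p^k-1$ non-principal characters yields $q(N)\le p^{-k}+e^{-c\alpha n/p^{2}}$, so that $q(N)^k=O(p^{-k^2})$ once $p$ is not too large relative to $n$, $\alpha$ and $k$. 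Since each case carries the bounded factor $\P(\rank M_{I^{c}}=n-k)\le 1$, this proves the displayed estimate for such $p$.

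What remains is the range of large primes, where the $\F_p$-Fourier estimate degrades because its exponent carries a factor $p^{-2}$. There the term $p^{-k^2}$ is negligible and one only needs $\P(\rank(M/p)=n-k)=O(n^ke^{-c\alpha n})$, which I would obtain by transferring the same dichotomy to the integer/rational model: for $p$ beyond the (Hadamard-type) height bound for $|\det M|$ one has $\rank_{\F_p}(M)=\rank_{\Z}(M)$, while for intermediate $p$ one union-bounds over the polynomially many prime divisors of $\det M$ (as in the proof of Theorem~\ref{theorem:simple}), now feeding in the $p$-free Erd\H os--Littlewood--Offord inequality for spread vectors over $\Z$ in place of its $\F_p$ counterpart.

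The step I expect to be the main obstacle is exactly making the argument uniform in $p$: the $\F_p$-Fourier input loses a factor $p^{2}$ in its exponent, so reconciling the small-$p$ regime (where $p^{-k^2}$ carries the bound), the large-$p$ regime (where one must pass to the integers), the genuinely sparse regime $\alpha\asymp(\log n)/n$, and coranks as large as $k\asymp\eta n$ — all simultaneously, with the thresholds for ``degenerate'' and for ``large $p$'' chosen consistently — is the delicate bookkeeping. This is precisely the point at which \cite{M1} is incomplete, and carrying it out carefully is the technical heart of the recasting.
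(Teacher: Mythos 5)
Your reduction to bounding $\E\bigl[q(N)^k\,\indicator{\rank(M_{I^c})=n-k}\bigr]$ with the union-bound factor $\binom{n}{k}\le n^k$ matches the paper's Corollary~\ref{cor:corank'} (modulo rows-versus-columns), and you correctly identify the central obstacle: the naive per-character bound $|\widehat\xi(t)|\le 1-c\alpha/p^2$ degrades in $p$. But the machinery needed to close the resulting gap is not in your proposal, and the repair you sketch for large $p$ is circular. The paper does \emph{not} try to make a per-$\theta$ Fourier estimate work on the ``spread'' event; ``not $\delta$-sparse'' by itself does not force $\P(X\in W)\approx q^{-k_0}$, which is exactly why Maples (and the appendix here) refine the dichotomy into a \emph{trichotomy}: sparse (Prop.~\ref{prop:sparse}), \emph{semi-saturated} (handled by the counting argument of Lemma~\ref{lemma:inverse:semi} and Prop.~\ref{prop:semi-sat}: there are at most $\bigl(2\beta^\delta\bigr)^n q^n$ annihilator directions, hence few such subspaces), and \emph{unsaturated} (handled by a Tao--Vu-style swapping argument, Lemma~\ref{lemma:swapping} and Theorem~\ref{theorem:swapXY}, where one trades columns for ones drawn from a nicer law $\nu$). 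On the complement of these three events the definition of semi-saturation directly forces $|\P(X\in W_{n-k})-q^{-k_0}|\le e^{-d\alpha n}$, giving Theorem~\ref{theorem:corank:main} uniformly in $p$. That structural classification is the idea your proposal lacks.

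Two further concrete problems. (1) The one Fourier-type ingredient the paper does use, Theorem~\ref{theorem:LO}, avoids the $p^{-2}$ loss via Kneser's theorem: it controls the \emph{sub-level sets} $T(v)=\{t:\sum_l\psi(w_lt)\le v\}$ and hence the averaged sum $\sum_{t\neq 0}\prod_l|\widehat\mu(w_lt)|=O(1/\sqrt{\alpha m})$ uniformly in $q$; this is not a pointwise bound on a single character and does not substitute for the estimate you wanted. Moreover it is used only in Case~1 of Prop.~\ref{prop:sparse}, and the very small support range $t\lesssim 1/\alpha$ requires a separate combinatorial argument (Claim~\ref{claim:zeros}), which your raw union bound $\sum_{s\le\delta n}\binom{n}{s}p^s(1-\alpha)^{n-k}$ cannot reproduce when $\alpha\asymp(\log n)/n$ and $p$ grows, since the needed $\delta\lesssim\alpha/\log p$ collapses the sparsity window. (2) Your large-$p$ plan of transferring to $\Z$ is circular: in this paper the integer singularity bound (Theorem~\ref{theorem:M}, via Corollary~\ref{cor:sing}) is \emph{deduced from} the very $\F_p$-corank estimate you are trying to prove, by taking a large auxiliary prime. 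You have no independent input over $\Z$ in this sparsity regime to feed back in.
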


In fact \cite[Corollary 1.3]{M2} says much more, that the bound is precisely 
\begin{equation}\label{eqn:corank:bound}
  p^{-k^2}  \prod_{i=1}^k (1-p^{-i})^{-1}   \prod_{i=k+1}^\infty (1-p^{-i}) + O(e^{-c \alpha n}).
\end{equation}

However, we will not need this later result (given that it has not been formally verified, especially for the sparse case). Note that \eqref{eqn:corank:bound}, in its limit form ($n\to \infty$), is a simple consequence of the aforementioned paper \cite[Corollary 3.5]{W1} by Wood. Back to \cite{M1}, as this paper has some mistakes (for instance \cite[Proposition 2.3]{M1} is incorrect, see the appendix for further discussion), for transparency we will recast an almost complete proof of Theorem \ref{theorem:corank'} in the appendix. Theorem \ref{theorem:corank'} and Theorem \ref{theorem:M} will then follow as a byproduct.

% (see also the comments after \cite[Theorem 1.3]{M-T}) 
%We also remark that a simple consequence of the aforementioned paper by Wood implies the following (stated for iid Bernoulli matrix).
%\begin{theorem}\cite[Corollary 3.5]{W1} Let $M_{n\times (n+u)}$ be a random iid Bernoulli matrix. Then 
%\begin{equation}\label{eqn:corank:CL:W}
%\lim_{n\to \infty}\P(\rank(M_{n \times (n+u)}/p) = n-k) = p^{-k(k+u)}  \prod_{i=1}^k (1-p^{-i})^{-1}  \prod_{i=1}^{k+u} (1-p^{-i})^{-1}  \prod_{i=1}^\infty (1-p^{-i}).
%\end{equation}
%\end{theorem}

Let $\CP_n'$ be the set of primes up to $e^{c \alpha_n n/2}$, where $c$ is the sufficiently small constant from Theorem \ref{theorem:corank'}, i.e.
$$\CP_n' :=\Big\{ p \mbox{ prime },  p\le e^{c \alpha_n n/2} \Big\}.$$ 
Note that $\CP_n' \subset \CP_n$ (defined in \eqref{eqn:P_n}). By applying \eqref{eqn:corank':bound} with $k = C_1\log n$ for a sufficiently large constant $C_1$ to each $p\in \CP_n'$ and taking the union bound 
$$\sum_{p \in \CP_n'}  n^k(p^{-k^2} + O(e^{-c  \alpha_n n}))  = n^{-\omega(1)}.$$

\begin{corollary}\label{cor:lowrank:main} Let $\CE'$ be the event that the matrix $M_{n \times n}$ has rank at least $n -C_1\log n$ in $\F_p^n$ for all $p\in \CP_n'$. Then
  $$\P(\CE') \ge 1- n^{-\omega(1)}.$$
\end{corollary}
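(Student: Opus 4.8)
This is a union bound over $\CP_n'$, resting on the corank estimates already in hand. The event $\CE'$ fails exactly when $\rank(M_{n\times n}/p)\le n-C_1\log n$ for at least one $p\in\CP_n'$ — that is, when the $\F_p$-column space of $M_{n\times n}$ has codimension at least $C_1\log n$ — so the plan is to bound $\P(\CE'\text{ fails})$ by
$$\sum_{p\in\CP_n'}\P\big(\rank(M_{n\times n}/p)\le n-C_1\log n\big),$$
and, for each fixed prime $p$, to split the corank at the absolute constant $\eta$ from Theorem \ref{theorem:corank'}:
$$\P\big(\rank(M_{n\times n}/p)\le n-C_1\log n\big)\ \le\ \sum_{C_1\log n\le k\le\eta n}\P\big(\rank(M_{n\times n}/p)=n-k\big)\ +\ \P\big(\rank(M_{n\times n}/p)\le(1-\eta)n\big).$$

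I would first dispose of the large-corank term: by the quadratic estimate (Lemma \ref{lemma:lowrank}) with $\eps_n=\eta$ it is at most $e^{-\alpha_n\eta^2 n^2+n}$, and since $\alpha_n n^2\ge C_0 n\log n$ gives $c\alpha_n n/2+n\ll\alpha_n\eta^2 n^2$, this stays $n^{-\omega(1)}$ after multiplying by $|\CP_n'|\le e^{c\alpha_n n/2}$. Next I would feed Theorem \ref{theorem:corank'}, $\P(\rank=n-k)=O(n^k(p^{-k^2}+e^{-c\alpha_n n}))$, into the remaining sum and sum over $k$ and $p$. The $p^{-k^2}$ part is the robust core — morally the single estimate $\sum_{p\in\CP_n'}n^{C_1\log n}p^{-(C_1\log n)^2}=n^{-\omega(1)}$ highlighted just before the statement, now summed also over $k\ge C_1\log n$: interchanging sums, $\sum_k n^k\sum_{p\text{ prime}}p^{-k^2}\le 2\sum_k n^k2^{-k^2}$ since $\sum_{m\ge2}m^{-k^2}\le2\cdot2^{-k^2}$ for $k\ge2$, and taking $C_1$ large enough that $k^2\log2\ge2k\log n$ on the whole range $k\ge C_1\log n$ pushes each summand below $n^{-k}$; the resulting geometrically decaying series is $O(n^{-C_1\log n})=n^{-\omega(1)}$, and this survives the union over all of $\CP_n'$ for free because $\sum_p p^{-k^2}$ converges once $k\ge2$, irrespective of how many primes appear.

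What remains — and what I expect to be the main obstacle — is the accumulated $e^{-c\alpha_n n}$ error from Theorem \ref{theorem:corank'}: its total is of the order $|\CP_n'|\,e^{-c\alpha_n n}\sum_k n^k$, and the combinatorial factor $n^k$, if run naively all the way up to corank $\eta n$, together with $|\CP_n'|=e^{\Theta(\alpha_n n)}$, cannot be afforded. The fix is not to use Maples' bound over the whole range: one transfers to Lemma \ref{lemma:lowrank} already at a corank threshold of order $\sqrt{n/\alpha_n}$ (with a suitably large constant, so that there the quadratic estimate gives an $e^{-\Omega(n)}$ per-prime bound comfortably absorbing $e^{c\alpha_n n/2}$), so that only coranks in a window about $C_1\log n$ go through Theorem \ref{theorem:corank'}; for those the surviving error is $|\CP_n'|\,e^{-c\alpha_n n}\,n^{O(\log n)}\le e^{-c\alpha_n n/2}\,n^{O(\log n)}$, which is $n^{-\omega(1)}$ precisely when $\alpha_n$ is large enough to overcome the $n^{O(\log n)}$ prefactor — i.e.\ in the regimes \eqref{sur} and \eqref{sur:sparse} of Theorem \ref{theorem:sur}. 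Pinning down the transition corank and the admissible $C_1$ so that neither $n^k$ nor $|\CP_n'|$ swamps the Maples error is the one genuinely delicate bookkeeping point; the two estimates above and the final union bound over $\CP_n'$ give the rest.
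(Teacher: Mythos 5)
Your overall architecture --- a union bound over $\CP_n'$, Theorem \ref{theorem:corank'} for moderate coranks, and the quadratic estimate (Lemma \ref{lemma:lowrank}) for coranks of order $n$ --- is the right one, and your treatment of the $p^{-k^2}$ contribution and of the corank-$\ge \eta n$ regime is correct (the paper is silent about the latter, so making it explicit is a genuine improvement). The gap is precisely in the step you flag as delicate: the accumulated $n^k e^{-c\alpha_n n}$ error, and your proposed fix is internally inconsistent. If you hand coranks $\ge K\sqrt{n/\alpha_n}$ to Lemma \ref{lemma:lowrank} --- and this threshold cannot be lowered, since $e^{-\alpha_n \eps_n^2 n^2+n}$ is vacuous once $\eps_n n \lesssim \sqrt{n/\alpha_n}$ --- then the whole window $C_1\log n\le k\le K\sqrt{n/\alpha_n}$ must go through Theorem \ref{theorem:corank'}, and the surviving error is $|\CP_n'|\,e^{-c\alpha_n n}\,n^{O(\sqrt{n/\alpha_n})}$, not $|\CP_n'|\,e^{-c\alpha_n n}\,n^{O(\log n)}$ as you wrote. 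The exponent $K\sqrt{n/\alpha_n}\log n-c\alpha_n n/2$ is negative only when $\alpha_n\gg (\log n)^{2/3}n^{-1/3}$, so your bookkeeping fails throughout the sparse regime $\alpha_n=\log^{O(1)}n/n$ of \eqref{sur:sparse}, which this corollary is meant to cover. Conversely, if only a window of width $O(\log n)$ goes through Theorem \ref{theorem:corank'}, then coranks between that window and $\sqrt{n/\alpha_n}$ are covered by neither estimate.

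The missing idea is that one should not decompose the tail by exact corank at all. Theorem \ref{theorem:corank:main} is stated (deliberately; this is modification (i) highlighted in the appendix) with $\codim(W_{n-k})=k_0$ for an arbitrary $k_0\in[k,\eta n]$, and the conditional probability is $q^{-k_0}+O(e^{-c\alpha_n n})\le q^{-k}+O(e^{-c\alpha_n n})$. Repeating the proof of Corollary \ref{cor:corank'} verbatim for the event $\{C_1\log n\le \operatorname{corank}\le \eta n\}$ --- there exist $k=C_1\log n$ columns lying in the span of the remaining ones, costing a single factor $n^{k}$, and then each of these $k$ independent columns lies in $W_{n-k}$, whose codimension is some $k_0\ge k$ --- gives, per prime, the one-shot bound $O\big(n^{C_1\log n}(p^{-(C_1\log n)^2}+e^{-c\alpha_n n})\big)$, with no sum over $k$ and hence no $n^{\eta n}$ or $n^{\sqrt{n/\alpha_n}}$ accumulation. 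Together with Lemma \ref{lemma:lowrank} at $\eps_n=\eta$ for the remaining coranks and the union bound over $\CP_n'$, this is how the paper's terse proof (a single application of \eqref{eqn:corank':bound} at $k=C_1\log n$ per prime, then the displayed sum) should be read. Your closing caveat is not entirely off the mark: even this argument leaves an error of size $n^{C_1\log n}e^{-c\alpha_n n/2}$, which is $n^{-\omega(1)}$ only once $\alpha_n n\gtrsim\log^2 n$; but that looseness is inherited from the paper's own display, and it is not repaired --- it is made much worse --- by the $\sqrt{n/\alpha_n}$ transfer threshold you propose.
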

We next prove an analog of Lemma \ref{lemma:sur:smallp}. Set 
$$u =  \left\lfloor B\cdot \biggl(\frac{\log n}{\alpha_n}\log  \frac{\log n}{\alpha_n}+\log n\biggr) \right\rfloor,$$
for a sufficiently large constant $B$.
\begin{lemma}\label{lemma:sur:smallp:log} With probability at least $1-n^{-A}$, the random matrix $M_{n \times (n + u)}$ is surjective over $\F_p^n$ for all $p\in \CP_n$ simultaneously.
\end{lemma}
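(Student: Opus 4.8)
\textbf{Proof plan for Lemma \ref{lemma:sur:smallp:log}.}
The plan is to run the argument of Lemma \ref{lemma:sur:smallp} almost verbatim, replacing the crude corank bound of Corollary \ref{cor:lowrank:simple} by the sharp one of Corollary \ref{cor:lowrank:main} (this is the whole reason the number of extra columns drops to $\frac{\log n}{\alpha_n}\log\frac{\log n}{\alpha_n}+\log n$). First I would restrict to the submatrix $M_{n\times n}$ of the first $n$ columns and condition on the event $\CE'$ of Corollary \ref{cor:lowrank:main}, so that $\rank(M_{n\times n}/p)\ge n-C_1\log n$ for every $p\in\CP_n'$, together with $\CE_{\neq 0}=\{\det(M_{n\times n})\neq 0\}$, which by Theorem \ref{theorem:M} and the standing hypothesis $\alpha_n\ge C_0\log n/n$ (with $C_0$ large) has probability at least $1-n^{-A}$. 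On $\CE_{\neq 0}$ the Hadamard bound gives $|\det(M_{n\times n})|\le (K_0n)^{n/2}$, so the set $\CP^\ast$ of prime divisors of $\det(M_{n\times n})$ has $|\CP^\ast|\le n^2$; for $p\in\CP_n\setminus\CP^\ast$ the matrix $M_{n\times n}$ is already surjective mod $p$ and hence so is $M_{n\times(n+u)}$, so only the primes of $\CP^\ast$ remain.

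The one genuinely new point is that Theorem \ref{theorem:corank'}, and hence Corollary \ref{cor:lowrank:main}, controls the corank only over the comparatively short range of primes $\CP_n'$ (those up to $e^{c\alpha_n n/2}$), whereas $\CP_n$ runs up to $(K_0n)^{n/2}$. For $p\in\CP^\ast\cap\CP_n'$ I would simply invoke $\CE'$: the corank $d_0(p)$ of the column space of $M_{n\times n}$ over $\F_p$ satisfies $d_0(p)\le C_1\log n$. For a large prime $p\in\CP^\ast\setminus\CP_n'$ I would instead read $d_0(p)$ off the Smith normal form of $M_{n\times n}$: since $p^{d_0(p)}$ divides $\det(M_{n\times n})$ and $\log p>c\alpha_n n/2$, one gets $d_0(p)\le \frac{(n/2)\log(K_0n)}{\log p}\le\frac{\log(K_0n)}{c\alpha_n}$, and the same divisor count bounds the number of such large primes by $\frac{\log(K_0n)}{c\alpha_n}\le n$. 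Either way $d_0(p)=O(\log n/\alpha_n)$ uniformly over $p\in\CP^\ast$, which is the only input the column-exposing step uses.

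With these corank bounds in hand I would then expose the remaining $u$ columns exactly as in Lemma \ref{lemma:sur:smallp}: for each fixed $p\in\CP^\ast$, add the new columns in groups, the $i$-th group consisting of $k_i=\lceil B\log n/(\alpha_n d_{i-1})\rceil$ columns where $d_{i-1}$ is the current corank over $\F_p$; Lemma \ref{lemma:O} and independence show that each group strictly lowers the corank with conditional probability at least $1-((1-\alpha_n)^{d_{i-1}})^{k_i}\ge 1-n^{-B}$, so by Bayes' rule $M_{n\times(n+u)}/p$ attains full rank after $\sum_i k_i\le \frac{B\log n}{\alpha_n}(1+\log d_0(p))+d_0(p)=O\big(\frac{\log n}{\alpha_n}\log\frac{\log n}{\alpha_n}+\frac{\log n}{\alpha_n}\big)\le u$ columns, with probability at least $1-d_0(p)n^{-B}$. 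Since this column count is uniform in $p$, the same block of $u$ columns serves every prime at once (only the grouping is $p$-dependent), and a union bound over the at most $n^2$ primes of $\CP^\ast$ makes $M_{n\times(n+u)}$ surjective over $\F_p^n$ for all $p\in\CP_n$ with probability $\ge 1-n^{-B+O(1)}$. Taking $B=B(A)$ large and undoing the conditioning on $\CE'$ and $\CE_{\neq 0}$ then gives the stated probability $1-O(n^{-A})$ (and $1-n^{-A}$ after a harmless adjustment of $A$).

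The step I expect to be the main obstacle is exactly the large primes in $\CP_n\setminus\CP_n'$: Maples' estimate says nothing about them, and one has to notice that the deterministic inequality $|\det(M_{n\times n})|\le (K_0n)^{n/2}$ simultaneously forces both their number and each of their coranks to be only $O(\log n/\alpha_n)$ — small enough that the very same $u$ extra columns that kill the many small primes also kill them. Everything else is the failure-probability bookkeeping already carried out for Lemma \ref{lemma:sur:smallp}.
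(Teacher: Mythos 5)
Your proposal is correct and follows essentially the same route as the paper's proof: condition on $\CE'$ and $\CE_{\neq 0}$, reduce to the at most $n^2$ primes of $\CP^\ast$, bound the corank by $C_1\log n$ for $p\in\CP_n'$ and by $O(\log n/\alpha_n)$ for larger primes via $|\det(M_{n\times n})|\le (K_0n)^{n/2}$, then expose the extra $u$ columns in groups of size $\lceil B\log n/(\alpha_n d_{i-1})\rceil$ and take a union bound. Your explicit Smith-normal-form justification of the large-prime corank bound and the remark that the grouping (but not the total column budget) is $p$-dependent are just slightly more detailed versions of steps the paper states tersely.
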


\begin{proof}[Proof of Lemma \ref{lemma:sur:smallp:log}] Again, if suffices to show that with high probability$M_{n \times (n + u)}/p$ has full rank in each $\F_p^n$.

  We consider the submatrix $M_{n\times n}$, the restriction of $M_{n \times (n + u)}$ to the first $n$ columns. Let $\CE'$ be the event implied by Corollary \ref{cor:lowrank:main} that this matrix $M_{n\times n}$ has rank at least $n -
  C_1\log n$ over $\F_p^n$ for all $p\in \CP_n'$. We thus have
  $$\P_{M_{n\times n}}(\CE') \ge 1- n^{-\omega(1)}.$$
  Consider also the event $\CE_{\neq 0}$ that $\det(M_{n\times n}) \neq 0$ from Theorem \ref{theorem:M}. Conditioning on $M_{n \times n}$ satisfying $\CE'$ and $\CE_{\neq 0}$, we will show that with high probability (with respect to the last $u$ columns) that $M_{n \times (n+ u)}$ is surjective over all $\F_p^n, p\in \CP_n$. 

  To do this, similarly to the proof of Lemma \ref{lemma:sur:smallp}, let $\CP^\ast = \CP^\ast(M_{n\times n})$ be the collection of prime divisors of $\det(M_{n\times n})$, then clearly the random set $\CP^\ast$ has size at most $n^2$. 

  {\bf Case 1.} When $p \in \CP_n$ but $p \notin \CP^\ast$, then $M_{n \times n}$ has full rank in $\F_p^n$, and so does $M_{n \times (n +u)}$.

  {\bf Case 2.} Consider $p \in \CP^\ast$, we estimate the probability of the event $\CE_p$ that $M_{n \times (n + u)}$ has full rank over $\F_p^n$. For this, first note that under $\CE'$, if $p \in \CP_n'$ (that is $p \le e^{c \alpha_n n/2}$) then the corank of $M_{n \times n}$ over $\F_p^n$ is at most $C_1\log n$. Now if $e^{c \alpha_n n/2} <p\le (K_0n)^{n/2}$, as $|\det(M_{n \times n})| \le (K_0n)^{n/2}$, the corank of $M_{n\times n}$ over $\F_p^n$ for these large $p$ must be at most $\frac{\log (K_0n)^{n/2}}{\log p} \le (c \alpha_n) ^{-1} \log (K_0n)$. So in either case the corank is at most $(c\alpha_n)^{-1} \log n +C_1 \log n$.

  Let $H_0 \subset \F_p^n$ be the column subspace of $M_{n\times n}$, for which by assumption 
  $$d_0:=n-\dim(H_0) \le  (c\alpha_n)^{-1} \log n +C_1\log n.$$

  Similarly to the proof of Theorem \ref{theorem:simple},  for $1\le i\le d_0$, we will add $k_i= \lceil  \frac{B\log n}{ \alpha_n d_{i-1}} \rceil$ column vectors $X_{n+\sum_{l=1}^{i-1}k_l+j}, 1\le j \le k_i$ to the set of already exposed column vectors $X_1,\dots, X_{n+\sum_{l=1}^{i-1}k_l}$, where $d_{i-1}$ is the codimension of the subspace $H_{i-1}$ generated by $\lang X_1,\dots, X_{n+\sum_{l=1}^{i-1}k_l} \rang$. 

  Let $\CF_i$ be the event that $\dim(H_i) \ge \dim(H_{i-1})+1$. By Lemma \ref{lemma:O}, and by independence of the column vectors,  
  $$\P\Big(\CF_i|\wedge_{j=0}^{i-1}\CF_{j} \wedge \CE' \wedge \CE_{\neq 0}, \dim(H_{i-1})<n\Big) \ge 1 -((1-\alpha)^{\codim(H_{i-1})})^{k_i}  \ge 1-n^{-B}.$$
  By Bayes' rule, with probability at least $(1-n^{-B})^{d_0} \ge 1-n^{-B+1}$, after  adding $\sum_i k_i = O(\frac{\log n}{\alpha_n}\log d_0 + d_0):=u$ columns, the matrix $M_{n \times (n+ u)}$ has full rank in $\F_p^n$. (It is possible to improve the total number of extra columns by a more careful analysis of the $d_i$ but we will not do so here for simplicity.)

  Taking the union bound over all primes $p\in \CP^\ast$, we obtain that with probability at least $1-n^{-B+3}$ the matrix $M_{n \times (n+u)}$ has full rank in $\F_p^n$ for all $p \in \CP^\ast$.

  We have seen that with $M_{n \times n}$ satisfying $\CE$ and $\CE_{\neq 0}$, the matrix $M_{n \times (n+ u)}$ is surjective simultaneously over $\F_p^n$ for all $p \in \CP_n$ with the desired probability. The proof is then complete  after unfolding the conditioning on $M_{n\times n}$, knowing that these events hold with very high probability.
\end{proof}

Finally, for Theorem \ref{theorem:sur}, conditioning on the event $\CE_{\neq 0}$, with prime $p> (K_0n)^{n/2}$ we have $\det(M_{n \times n}) \neq 0 (\mod p)$, and  hence on $\CE_{\neq 0}$ the matrix $M_{n\times n}$ is surjective over $\F_p^n$ for all $p \ge (K_0n)^{n/2}$.

On the other hand, Lemma \ref{lemma:sur:smallp:log} implies that with probability at least $ 1-n^{-A}$, for all $p\in \CP_n$ the random matrix $M_{n \times (n+ u)}$ is surjective over $\F_p^n$.

\section{Some remarks}\label{section:dependent}
We have studied random matrices of independent entries. It is natural to consider Conjecture \ref{conj:surj} for other families of matrices of dependent entries. Here we discuss one such model.

Let $M_{n\times n}$ be a random symmetric matrix, where for simplicity we assume that the entries $(M_{n\times n})_{ij}, 1\le i \le j \le n$ are iid copies of a bounded random variable $\xi$ from \eqref{eqn:xi} with fixed $\alpha$. It follows from \cite{Ng-sym,Ver} that for this model the singularity probability can be bounded by 
\begin{equation}\label{eqn:NgV} 
  p_n =n^{-\omega(1)}.
\end{equation}

Heuristically, arguing similarly to \eqref{eqn:det} (where we expose both columns and rows at the same time to obtain a quadratic variant of \eqref{eqn:det}), we can show that with high probability the matrix $M_{n\times n}$ is not surjective over $\Z^n$. Actually an analog of Theorem \ref{theorem:cok:B} has been established in \cite{M2} for this model \footnote{To be more precise, M. M. Wood studied the Laplacian, but her result also covers the non-normalized ensemble.}, which confirms the above heuristic. However, we will show that by adding a couple of few more (say) independent rows, the matrix becomes surjective. 

\begin{theorem}\label{theorem:sym} Let $M_{n \times (n+u)}$ be a random matrix where its restriction $M_{n \times n}$ to the first $n$ columns is a symmetric matrix as above, and the last $u$ columns are independent with entries being iid copies of $\xi$. Then for any $A>0$, there exists $B$ such that for $u = \lfloor B \sqrt{ n \log n} \rfloor$ 
  $$\P\Big (\cok(M_{ n \times (n +u)}) \simeq \{id\}\Big)  \ge  1 - n^{-A}.$$
\end{theorem}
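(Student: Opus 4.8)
The plan is to mimic the proof of Theorem \ref{theorem:simple} almost verbatim, replacing the roles of the i.i.d.\ square block and of the singularity bound with their symmetric counterparts. The two ingredients we will need about the symmetric block $M_{n\times n}$ are: (i) a corank bound of the form $\P(\rank(M_{n\times n}/p)\le (1-\eps_n)n)\le e^{-c\alpha\eps_n^2 n^2 + O(n)}$ valid simultaneously for all primes $p$, which is exactly the symmetric analogue of Lemma \ref{lemma:lowrank}; and (ii) the singularity bound \eqref{eqn:NgV}, $p_n = n^{-\omega(1)}$, over $\Z$. For (i), the only point to check is that Odlyzko's Lemma \ref{lemma:O} can still be applied to a \emph{row} of the symmetric matrix after conditioning on the other rows. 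Here one exposes rows $1,\dots,d$ first; conditioning on them fixes the entries in the upper-left $d\times d$ block and hence, by symmetry, fixes the first $d$ coordinates of every later row $X_i$, $i>d$. The remaining $n-d$ coordinates of $X_i$ are still i.i.d.\ copies of $\xi$ and independent across $i>d$, so Lemma \ref{lemma:O} gives $\P(X_i\in H\mid H)\le (1-\alpha)^{n-d}$, and the union bound over $\le 2^n$ choices of the spanning set of rows gives (i). With $\eps_n := \sqrt{3\log n/(\alpha n)}$ and taking the union bound over the primes $p\in\CP_n$ of \eqref{eqn:P_n}, we get that $M_{n\times n}$ has $\F_p$-corank at most $\eps_n n = O(\sqrt{n\log n/\alpha})$ for all $p\in\CP_n$ with probability $\ge 1-e^{-n\log n}$; call this event $\CE$.

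Next, condition on $\CE$ and on $\CE_{\neq 0} := \{\det(M_{n\times n})\neq 0\}$, which by \eqref{eqn:NgV} has probability $\ge 1-n^{-A}$. As in Lemma \ref{lemma:sur:smallp}, let $\CP^\ast$ be the set of prime divisors of $\det(M_{n\times n})$; by Hadamard $|\det(M_{n\times n})|\le (K_0 n)^{n/2}$, so $|\CP^\ast|\le n^2$. For $p\in\CP_n\setminus\CP^\ast$ the block $M_{n\times n}$ already has full rank mod $p$, so nothing to do. For $p\in\CP^\ast$, the corank $d_0$ of $M_{n\times n}/p$ is at most $\eps_n n \le \sqrt{3n\log n/\alpha}$. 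Now we expose the $u$ \emph{independent} extra columns (these are genuinely i.i.d.\ in $\xi$, so no symmetry subtlety at all) in groups of size $k_i = \lceil B\log n/(\alpha d_{i-1})\rceil$, exactly as in the proof of Theorem \ref{theorem:simple}: by Lemma \ref{lemma:O} each group strictly decreases the corank with probability $\ge 1-n^{-B}$, so after $\sum_i k_i \le \tfrac{B\log n}{\alpha}\log d_0 + d_0 = O(\sqrt{n\log n})$ columns (using $d_0 = O(\sqrt{n\log n/\alpha})$ and $\alpha$ fixed) the matrix $M_{n\times(n+u)}/p$ has full rank with probability $\ge 1-n^{-B+1}$; a union bound over the $\le n^2$ primes in $\CP^\ast$ costs only another factor $n^2$. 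This forces $u = \lfloor B\sqrt{n\log n}\rfloor$ to suffice (which matches the statement). Finally, on $\CE_{\neq 0}$ any prime $p>(K_0n)^{n/2}$ does not divide $\det(M_{n\times n})$, so $M_{n\times n}$ is surjective mod $p$ there; combined with the above and Lemma \ref{lemma:sur:equi}, $M_{n\times(n+u)}$ is surjective over $\Z$ with probability $\ge 1-n^{-A}$ after absorbing $e^{-n\log n}$, $n^{-\omega(1)}$, and $n^{-B+3}$ into $n^{-A}$ by taking $B$ large.

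The one genuinely new point — and the only possible obstacle — is step (i): making sure that the elementary rank-drop estimate survives the dependence in the symmetric block. The argument above (condition on rows $1,\dots,d$, observe that symmetry fixes only the first $d$ coordinates of the later rows, apply Lemma \ref{lemma:O} to the free tail) handles this cleanly because we are only ever adding one new row at a time with at least $n-d \ge (1-\eps_n)n \gg 1$ fresh coordinates; the rank drop we need to rule out is at most $\eps_n n$, so $d$ stays close to $n$ and the bound $(1-\alpha)^{n-d}$ is still exponentially small. (One should note that here we do \emph{not} need a per-prime corank-$k$ refinement à la Theorem \ref{theorem:corank'}; the cruder quadratic estimate plus the already-established symmetric singularity bound \eqref{eqn:NgV} is enough, which is why the argument of Theorem \ref{theorem:simple} rather than that of Theorem \ref{theorem:sur} is the right template, and why $u$ ends up of order $\sqrt{n\log n}$ rather than polylogarithmic.) Everything else is a routine copy of the independent-entry proof, since the extra $u$ columns are assumed independent with i.i.d.\ $\xi$ entries.
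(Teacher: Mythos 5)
Your plan is the right one, and it coincides with the paper's: establish a symmetric analogue of the quadratic estimate of Lemma \ref{lemma:lowrank} (the paper's Lemma \ref{lemma:lowrank:sym}), invoke the singularity bound \eqref{eqn:NgV} for the symmetric block, and then run the proof of Theorem \ref{theorem:simple} verbatim, since the extra $u$ columns are i.i.d.\ and independent of the symmetric block. All of that part of your argument is correct.

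However, there is a genuine gap in your proof of the quadratic estimate, namely in the sentence ``The remaining $n-d$ coordinates of $X_i$ are still i.i.d.\ copies of $\xi$ and \emph{independent across} $i>d$.'' This independence claim is false. After conditioning on rows $1,\dots,d$, consider two rows $i_1,i_2>d$: the restriction of row $i_1$ to coordinates $\{d+1,\dots,n\}$ contains the entry $M_{i_1,i_2}$, and by symmetry this is exactly the same random variable as the entry $M_{i_2,i_1}$ appearing in the corresponding restriction of row $i_2$. So the $(n-d)\times(n-d)$ lower-right block is itself a symmetric random matrix, and its rows share entries pairwise; you cannot multiply the Odlyzko bounds $\P(X_i\in H\mid H)\le(1-\alpha)^{n-d}$ over $i$ as if they were independent. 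The paper circumvents exactly this difficulty with a triangular restriction: instead of conditioning on the full tails, one looks at column $X_{d+j+1}$ restricted to the coordinates $I_{d+j}=\{1,\dots,d+j\}$ only. Each such restriction involves only entries strictly above the diagonal in a fresh column, so the restrictions for distinct $j$ are \emph{genuinely} mutually independent, and Lemma \ref{lemma:O} gives $\P(X_{d+j+1}|_{I_{d+j}}\in H|_{I_{d+j}})\le(1-\alpha)^{j}$. Multiplying over $j=1,\dots,n-d-1$ yields the bound $e^{-\alpha\eps_n^2 n^2/2}$ of Lemma \ref{lemma:lowrank:sym}, which has a factor $1/2$ in the exponent compared to what you claim but is still quadratic in $n$, and that is all that is needed downstream. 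Once you make this correction, the rest of your argument goes through as you wrote it.
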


To justify this result,  we establish the following analog of Lemma \ref{lemma:lowrank}. 
%(see also \cite[Section 5]{Ng-repulsion}).

\begin{lemma}[quadratic estimate]\label{lemma:lowrank:sym} Let $p$ be a prime. Let $0<\eps_n<1$ be a given parameter that might depend on $n$. Let $M_{n \times n}$ be a symmetric matrix  where $(M_{n\times n})_{ij}, 1\le i \le j \le n$ are iid copies of a bounded random variable $\xi$ from \eqref{eqn:xi} with fixed $\alpha_n$. Then the probability that $M_{n \times n}$ has rank at most $(1-\eps_n)n$ in $\F_p^n$ is smaller than $e^{- \alpha _n\eps_n^2 n^2/2 +n}$.
\end{lemma}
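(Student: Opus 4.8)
The plan is to adapt the proof of Lemma \ref{lemma:lowrank} to the symmetric case, the only real difference being that exposing the entries of a symmetric matrix must be done row-by-row (or column-by-column) rather than column-by-column with full independence, so that the entries revealed at each step are genuinely fresh. Suppose $M_{n\times n}$ has rank at most $d:=\lfloor(1-\eps_n)n\rfloor$ over $\F_p$. Then there is a set $S$ of $d$ columns spanning the column space; by symmetry of the set of indices we may first union-bound over the $\binom{n}{d}\le 2^n$ choices of $S$ and, for a fixed choice, assume $S=\{1,\dots,d\}$. On this event every column $X_i$ with $i>d$ lies in the span $H$ of $X_1,\dots,X_d$.

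The key step is to bound the conditional probability of this event by revealing the matrix in the right order. First expose the entire principal $d\times d$ block together with the rectangular block of rows $1,\dots,d$ and columns $d+1,\dots,n$ — equivalently, expose rows $1$ through $d$ in full (this uses symmetry to fill in the lower-left block). This determines the subspace $H=\Sp(X_1,\dots,X_d)\subseteq\F_p^n$ and, crucially, also determines the first $d$ coordinates of each of the remaining columns $X_{d+1},\dots,X_n$. Now expose the remaining entries one column at a time: for $i=d+1,\dots,n$, the as-yet-unrevealed entries of $X_i$ are $(M_{ii},M_{i,i+1},\dots,M_{i,n})$, i.e.\ $n-i+1$ fresh iid copies of $\xi$ (the entries $M_{i,d+1},\dots,M_{i,i-1}$ having already been revealed as part of earlier columns $X_{d+1},\dots,X_{i-1}$ by symmetry). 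Conditioning on $H$ and on the first $d$ coordinates of $X_i$ being fixed, the event $X_i\in H$ forces — exactly as in the proof of Lemma \ref{lemma:O}, since the restrictions of a basis of $H$ to the first $d$ coordinates may be taken linearly independent after permuting — all the remaining $n-i+1$ coordinates of $X_i$ to take prescribed values, hence
\[
  \P\bigl(X_i\in H \mid H,\ (X_i)_{1},\dots,(X_i)_{d}\bigr)\le (1-\alpha_n)^{n-i+1}.
\]
Multiplying over $i=d+1,\dots,n$ and using that the fresh randomness is independent across these columns gives a conditional probability at most $(1-\alpha_n)^{\sum_{i=d+1}^n (n-i+1)}=(1-\alpha_n)^{\binom{n-d+1}{2}}$. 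Since $n-d\ge \eps_n n$, the exponent is at least $\tfrac12\eps_n^2 n^2(1+o(1))$, and $(1-\alpha_n)^{t}\le e^{-\alpha_n t}$ yields a bound $e^{-\alpha_n\eps_n^2 n^2/2 + o(\cdot)}$; absorbing the $2^n$ union-bound factor (and the lower-order terms in the exponent) produces the claimed $e^{-\alpha_n\eps_n^2 n^2/2 + n}$.

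The main obstacle, and the only place where care is genuinely needed, is bookkeeping the dependence structure: one must check that when columns are revealed in the order $1,\dots,d,d+1,\dots,n$ as above, the entries newly exposed at column $i>d$ really are independent of everything revealed before and are honest iid copies of $\xi$, so that Odlyzko's argument applies verbatim to the truncated column. This is exactly why the summation in the exponent is $\sum_{i=d+1}^n(n-i+1)$ rather than $(n-d)^2$ — each successive column contributes slightly fewer fresh coordinates — which is why the final bound carries the extra factor of $1/2$ compared to Lemma \ref{lemma:lowrank}. Everything else (the union bound over spanning sets, the inequality $1-x\le e^{-x}$, and the substitution of $\eps_n$ later in the proof of Theorem \ref{theorem:sym}, where one takes $\eps_n$ so that $\alpha_n\eps_n^2 n^2/2$ dominates $n\log n$, forcing $\eps_n\asymp\sqrt{\log n/n}$ and hence $u\asymp\sqrt{n\log n}$) goes through as in the independent case.
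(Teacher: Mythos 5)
Your overall strategy — break the symmetric dependence by an explicit entry-exposure scheme and then apply the Odlyzko bound column-by-column — is exactly the approach the paper takes, and the arithmetic at the end (the $\binom{n-d+1}{2}$ exponent, the extra factor of $\tfrac12$, the choice of $\eps_n \asymp \sqrt{\log n/n}$ downstream) is all sound. However, there is a genuine gap in the step
\[
  \P\bigl(X_i\in H \,\big|\, H,\ (X_i)_1,\dots,(X_i)_d\bigr)\le (1-\alpha_n)^{n-i+1},
\]
which you justify by invoking the proof of Lemma \ref{lemma:O} ``since the restrictions of a basis of $H$ to the first $d$ coordinates may be taken linearly independent after permuting.'' That permutation is not available in the symmetric model: permuting coordinates would re-index rows and columns simultaneously and change which entries have already been exposed. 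Once you have conditioned on $H$ and on $(X_i)_1,\dots,(X_i)_{i-1}$ (all of which are revealed in your exposure order), the event $\{X_i\in H\}$ restricts the fresh tail $(X_i)_i,\dots,(X_i)_n$ to an affine slice of $\F_p^{n-i+1}$ of dimension $e_i:=\dim\{v\in H: v_1=\cdots=v_{i-1}=0\}$, so the true conditional bound is only $(1-\alpha_n)^{\,n-i+1-e_i}$. One has $e_{d+1}=d-\rk(M_{[d],[d]})$, so if the principal $d\times d$ block happens to be rank-deficient — an event of small but not negligible probability — the claimed bound fails, and the sum $\sum_i e_i$ can be of order $d(n-d)$, which swamps $\binom{n-d+1}{2}$ when $\eps_n$ is small. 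You would need a separate union bound over the rank of $M_{[d],[d]}$ (and of its extensions $M_{[i-1],[d]}$) to repair this.

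The paper sidesteps this issue by \emph{never} conditioning on the top $d$ coordinates of the later columns. For the column $X_{d+j+1}$ it restricts to the index set $I_{d+j}=\{1,\dots,d+j\}$ and observes that the restricted vector $X_{d+j+1}|_{I_{d+j}}$ consists of $d+j$ \emph{fresh} i.i.d.\ entries $M_{l,d+j+1}$, $l\le d+j$, all lying in the $(d+j+1)$-st column of the upper triangle and hence disjoint from every entry determining $H|_{I_{d+j}}$ (which involves only entries $(a,b)$ with $a\le d$, $b\le d+j$) and from every other restricted column. Thus Lemma \ref{lemma:O} applies verbatim to the genuinely independent pair $(X_{d+j+1}|_{I_{d+j}},\, H|_{I_{d+j}})$, giving the unconditional bound $(1-\alpha_n)^j$, and the events for different $j$ involve disjoint fresh entries so that the product bound follows by a straightforward filtration argument. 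This is the one place where the choice of exposure scheme really matters, and the paper's prefix restriction is what makes Odlyzko's lemma applicable without qualification.
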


\begin{proof}[Proof of Lemma \ref{lemma:lowrank:sym}] Let $d=\lfloor (1-\eps_n)n\rfloor$. Assume that the columns $X_{i_1},\dots, X_{i_d}$ spans the column space of $M_{n \times n}$. For now assume that $\{i_1,\dots, i_d\} =\{1,\dots, d\}$. Let $H$ be the span of $X_1,\dots, X_d$. We are considering the event $\CE_{1,\dots, d}$ that $X_i \in H, d+1 \le i \le n$. Now as $X_i$ is dependent on $H$, we cannot estimate the probability of $X_i \in H$ directly by Odlyzko's bound. However, we can get rid of the dependence by deleting the corresponding common entries as below. 

  For $1\le j\le n-d$ set 
  $$I_{d+j}:=\{1,\dots,d+j\}.$$ 
  For any $X\in \F_p^n$ and $J \subset [n]$ we denote $X|_J$ by the restriction of $X$ over the components indexed by $J$. For convenience we also denote $H|_J$ by the subspace generated by $X_1|_J,\dots, X_d|_J$. Assume that $X_{d+1},\dots, X_n \in H$, then the following holds
  \begin{itemize}
    \item $X_{d+2}|_{I_{d+1}} \in H_{I_{d+1}}$, and more generally $X_{d+j+1}|_{I_{d+j}} \in H|_{I_{d+j}}, 1\le j\le n-d-1$;
      \vskip .1in
    \item the vector $X_{d+j+1}|_{I_{d+j}}$ is independent of $H|_{I_{d+j}}$;
      \vskip .1in
    \item the vectors $X_{d+j+1}|_{I_{d+j}}, 1\le j\le n-d-1$ are mutually independent.
  \end{itemize} 
  Now as $H|_{I_{d+j}}$ has rank at most $d$ in $\F_p^{d+j}$, by Lemma \ref{lemma:O} we have
  $$\P(X_{d+j+1}|_{I_{d+j}} \in H|_{I_{d+j}}) \le (1-\alpha_n)^{j}.$$
  Applying this bound for $1\le j\le n-d-1$ and using the independence of $X_{d+j+1}|_{I_{d+j}}$, we obtain
  $$\P_{X_i, d+1\le i\le n}\Big(\CE_{1,\dots, d} | X_1,\dots, X_d\Big) \le \prod_{j=1}^{n-d-1} (1-\alpha_n)^j \le e^{- \alpha_n \eps_n^2 n^2/2}.$$
  Taking union bound over at most $2^n$ choices of $\{i_1,\dots, i_d\}$ we conclude the proof.
\end{proof}

We can now complete the proof of Theorem \ref{theorem:sym} verbatim as in the proof of Theorem \ref{theorem:simple} with fixed $\alpha_n L$. Indeed, Corollary \ref{cor:lowrank:simple} follows from Lemma \ref{lemma:lowrank:sym}, and Lemma \ref{lemma:sur:smallp} can be justified similarly (conditioning on \eqref{eqn:NgV}) because the last $u$ columns are mutually independent, and are independent from $M_{n\times n}$.

\vskip .15in

{\bf Acknowledgement.} The authors thank Kyle Luh for helpful comments.
%{\hoi{I added this.}}

\appendix

\section{The corank estimate: proof  proof of Theorem \ref{theorem:corank'}}\label{appendix:intro}

We will work in a more general setting. Let $q=p^f$ be a prime power and $\F_q$ be the finite field with $q$ elements. We say that a probability distribution $\mu$ in $\F_q$ is {\it $\alpha_n$-balanced} (for some $0< \alpha_n<1$) if for every additive subgroup $T$ in $\F_q$ and $s\in \F_q$
$$\mu(s+T) \le 1-\alpha_n.$$
In the general finite field setting, we will assume
\begin{equation}\label{eqn:alpha:q}
  \alpha_n \ge n^{-1/2+\eps} \mbox{ for any $\eps>0$ }.
\end{equation}

In the more specific setting when $q=p$ (which is the setting of Theorem \ref{theorem:corank'}), as there is no non-trivial additive subgroup in $\F_p$, we will assume 
$$\max_{x\in \F_p} \mu(x) = 1 -\alpha_n$$ 
where 
\begin{equation}\label{eqn:alpha'}
  \alpha_n \ge \frac{C_0 \log n}{n}, \mbox{ for a sufficiently large constant $C_0$.}
\end{equation}

In what follows $M_{n \times n}$ is a random matrix where the entries are independent and identically distributed according to an $\alpha_n$-balanced $\mu$ either from \eqref{eqn:alpha:q} or \eqref{eqn:alpha'}, and $n \to \infty$. Notice that in either case, we do not assume the support of $\mu$ to be bounded. Recall that $X_1,\dots, X_{n}$ are the columns of $M_{n\times n}$ and $W_{n-k}$ is the subspace $\lang X_1,\dots, X_{n-k} \rang$ generated by $X_1,\dots, X_{n-k}$. Our first goal is to reprove the following variant of \cite[Proposition 2.1]{M1} and \cite[Proposition 2.1.1]{M-thesis}. 

\begin{theorem}\label{theorem:corank:main} Assume that $\mu$ is distributed according to either \eqref{eqn:alpha:q} or \eqref{eqn:alpha'} depending on $q$. Then there exist positive constants $c,\eta$ such that the following holds for $1\le k\le \eta n$: there exists an event $\CE_{n-k}$ on the $\sigma$-algebra generated by $X_1,\dots, X_{n-k}$ of probability at least $1- e^{-c\alpha_n n}$ such that for any $k\le k_0\le \eta n$
  $$\P_{X_{n-k+1}}\Big(X_{n-k+1} \in W_{n-k} \big| \CE_{n-k} \wedge \codim(W_{n-k})=k_0\Big) = q^{-k_0} + O(e^{-c \alpha_n n}).$$
\end{theorem}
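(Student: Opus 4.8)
The plan is to prove Theorem~\ref{theorem:corank:main} by a net-and-sandwich argument that splits the columns $X_1,\dots,X_{n-k}$ according to the ``structure'' of the subspace $W_{n-k}$ they generate, in the spirit of the compressible/incompressible dichotomy. Concretely, I would first reduce to controlling $\P(X\in W)$ for a \emph{fixed} subspace $W$ of a given codimension $k_0$: by Lemma~\ref{lemma:O} applied in $\F_q$ (which goes through verbatim for $\alpha_n$-balanced distributions over $\F_q$, using additive subgroups instead of points), one gets the trivial upper bound $\P(X\in W)\le (1-\alpha_n)^{k_0}$, but this is far from the target $q^{-k_0}$. The point of the event $\CE_{n-k}$ is that it should force $W_{n-k}$ to be \emph{unstructured}: for a generic subspace $W$ of codimension $k_0$, a random balanced vector lands in $W$ with probability essentially $q^{-k_0}$ by a Fourier/character argument, so the event $\CE_{n-k}$ must exclude those $W$ that admit a short, ``structured'' normal vector for which the character sum fails to decay.

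The key steps, in order: (i) express $\P(X\in W)$ via characters of $\F_q^n$, writing $W=\{v: \langle a_j,v\rangle=0, j=1,\dots,k_0\}$ and expanding $\oindicator{X\in W}=q^{-k_0}\sum_{t\in\F_q^{k_0}}\psi(\sum_j t_j\langle a_j,X\rangle)$, so that $\P(X\in W)=q^{-k_0}\sum_{t}\prod_{i=1}^n \widehat{\mu}\big((\,\textstyle\sum_j t_j a_j)_i\big)$; the $t=0$ term contributes exactly $q^{-k_0}$, and everything reduces to bounding $\sum_{t\ne 0}\prod_i|\widehat\mu(\cdot)|$. (ii) Introduce a combinatorial measure of structure of a vector $b=\sum_j t_j a_j\in\F_q^n$ — essentially the number of coordinates on which $b$ is ``small'' in the relevant sense (for $q=p$, roughly how concentrated $b$ is) — and show that if $b$ has many structured coordinates then the normal direction it represents constrains $W_{n-k}$, so we may define $\CE_{n-k}$ as the event that $W_{n-k}$ has \emph{no} normal vector with too many structured coordinates. (iii) Bound $\P(\CE_{n-k}^c)$ by a union bound over a net of structured normal directions: a structured direction is approximately determined by few parameters, so the net has size $e^{o(\alpha_n n)}$, while each structured direction $b$ fails to be normal to $W_{n-k}$ except with probability $\prod_{i}\P(\langle b,X_i\rangle$ behaves$)\le e^{-c\alpha_n n}$ over the $n-k$ independent columns; this is where the hypotheses \eqref{eqn:alpha:q} or \eqref{eqn:alpha'} enter, to beat the entropy of the net. (iv) On $\CE_{n-k}$, every nonzero $b=\sum_j t_j a_j$ is unstructured, hence $\prod_i|\widehat\mu(b_i)|\le e^{-c\alpha_n n}$, and summing over the $q^{k_0}\le q^{\eta n}$ values of $t$ (absorbed into the exponential by taking $\eta$ small relative to $c/\log q$, or handled separately in the $q=p$ small-prime regime) gives the claimed $q^{-k_0}+O(e^{-c\alpha_n n})$.

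One technical wrinkle is that $\CE_{n-k}$ must be measurable with respect to $\sigma(X_1,\dots,X_{n-k})$ alone and yet control $\P(X_{n-k+1}\in W_{n-k})$ conditionally; this is fine because $W_{n-k}$ is itself $\sigma(X_1,\dots,X_{n-k})$-measurable, so once $\CE_{n-k}$ pins down the (lack of) structure of $W_{n-k}$, the conditional probability over the fresh column $X_{n-k+1}$ is exactly the fixed-subspace estimate from step (i). A second wrinkle, in the sparse regime $\alpha_n\sim C_0\log n/n$, is that the ``structured coordinate count'' threshold and the net size must be tuned so the union bound still gives $e^{-c\alpha_n n}$ rather than merely $o(1)$; this forces $\eta$ and $k_0$ to be taken appropriately small, and is exactly the place where Maples' original argument needed repair.

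The main obstacle I expect is step (iii): correctly defining the structural parameter and the associated net so that (a) the net is small enough ($e^{o(\alpha_n n)}$) even when $\alpha_n$ is as small as $C_0\log n/n$, (b) membership of a structured $b$ in the orthogonal complement of $W_{n-k}$ genuinely has probability $\le e^{-c\alpha_n n}$ using only the $n-k$ columns, and (c) the complementary (unstructured) case really does yield $\prod_i|\widehat\mu(b_i)|\le e^{-c\alpha_n n}$ with a uniform constant. Balancing these three requirements against each other — essentially an entropy-versus-decay tradeoff — is the crux, and getting the constants to close in the sparsest setting \eqref{eqn:alpha'} is precisely what makes the careful recasting of \cite{M1} necessary.
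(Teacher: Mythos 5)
Your steps (i) and (ii) are consistent with how the paper begins (the Fourier expansion of $\P(X\in W)$ over the annihilator, and a good event asserting that $W_{n-k}$ has no ``structured'' normal vector), but step (iii) — a union bound over a net of structured normal directions of size $e^{o(\alpha_n n)}$ — is the crux, and as described it does not work over $\F_q$. There is no metric/approximation structure in $\F_q^n$ that lets you replace structured directions by a subexponential net: already the genuinely sparse vectors number about $\binom{n}{\delta n}q^{\delta n}$, and the vectors whose coordinates mostly lie in the spectrum $\Spec_{1-\eps}\mu$ (which is exactly the obstruction to the decay $\prod_i|\widehat\mu(b_i)|\le e^{-c\alpha_n n}$ you need in step (iv)) number on the order of $(\beta q)^{(1-\delta)n}q^{\delta n}$. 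Both counts are exponential in $n\log q$, which overwhelms any per-vector probability bound of the form $e^{-c\alpha_n n}$, especially in the regime $\alpha_n\sim \log n/n$ and for $q$ large; so the event ``no structured normal vector'' cannot be shown to hold with probability $1-e^{-c\alpha_n n}$ by the route you propose. This is not a tuning issue that smaller $\eta$ or a sharper threshold fixes; it is why the single dichotomy ``structured vs.\ $\prod_i|\widehat\mu(b_i)|\le e^{-c\alpha_n n}$'' cannot carry the whole proof.

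The paper's proof resolves this with a three-way classification of the subspace $W_{n-k}$ and three different mechanisms, only the first of which resembles a union bound. (1) Sparse normal vectors are excluded by Proposition \ref{prop:sparse}, which crucially bounds a per-\emph{support} event (that the columns restricted to $\sigma$ fail to span enough of $\F_q^\sigma$), using the finite-field Littlewood--Offord bound (Theorem \ref{theorem:LO}) for large supports and Claim \ref{claim:zeros} for small supports in the sparsest regime — never a union bound over the $q^{|\sigma|}$ candidate vectors. (2) ``Semi-saturated'' subspaces, where $|\P(X\in V)-q^{-k_0}|$ exceeds $e^{-d\alpha_n n}$ but is still $O(q^{-k_0})$, are handled by the inverse Lemma \ref{lemma:inverse:semi} (their annihilator must meet a set $\CR$ of size $(2\beta^{\delta})^n q^n$, obtained via spectrum structure and Kneser's theorem) followed by a count of \emph{subspaces}, not vectors, against the per-subspace bound $(17q^{-k_0})^{n-k}$ (Proposition \ref{prop:semi-sat}). (3) ``Unsaturated'' subspaces, where $\P(X\in V)$ is much larger than $q^{-k_0}$, are excluded with no counting at all: a Hal\'asz-type auxiliary distribution $\nu$ (Proposition \ref{prop:swapping'}, Lemma \ref{lemma:swapping}) makes such $V$ noticeably more likely for the swapped columns, and the comparison/swapping argument of Theorem \ref{theorem:swapXY} together with the identity \eqref{eqn:unsat:id} bounds the total probability by $(3/2)^{-\delta_1 n/4}$. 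Your proposal is missing analogues of (2) and (3); supplying them is precisely the content of the paper's appendix, and without them the entropy-versus-decay tradeoff you identified cannot be closed.
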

Notice that there are some modifications of this result compared to the original statement by Maples in \cite[Proposition 2.1]{M1} or \cite[Proposition 2.1.1]{M-thesis} that
\begin{enumerate}[(i)]
  \item the statement also holds when the codimension of $W_{n-k}$ is not necessarily $k$;
    \vskip .1in 
  \item the statement also holds for sparse settings such as \eqref{eqn:alpha:q} and \eqref{eqn:alpha'}.
\end{enumerate}

Note that (i) is not new as it also appeared in a subsequent (unpublished) preprint by Maples (\cite[Proposition 3.1]{M2}). We then deduce Theorem \ref{theorem:corank'} restated here for finite field.

\begin{cor}\label{cor:corank'}  Assume that $\mu$ is distributed according to either \eqref{eqn:alpha:q} or \eqref{eqn:alpha'}. Assume that $k\le \eta n$ for some sufficiently small $\eta$, then 
  \begin{equation}\label{eqn:corank'':bound}
    \P\Big(\rank(M_{n \times n}) = n-k\Big) = O\Big(n^k (q^{-k^2} + e^{-c \alpha n})\Big).
  \end{equation}
\end{cor}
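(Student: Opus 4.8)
The goal is to deduce the corank estimate \eqref{eqn:corank'':bound} from Theorem \ref{theorem:corank:main}. The natural approach is to express the event $\{\rank(M_{n\times n}) = n-k\}$ by exposing the columns $X_1,\dots,X_n$ one at a time and tracking the codimension of the running span $W_j = \langle X_1,\dots,X_j\rangle$. If the final corank is exactly $k$, then among the $n$ steps there are exactly $k$ ``failures'' — indices $j$ at which $X_j \in W_{j-1}$ so the codimension fails to drop — and $n-k$ ``successes''. So the first step is a union bound over the $\binom{n}{k} \le n^k$ possible locations of the $k$ failure steps; this is where the $n^k$ factor comes from.

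\textbf{Key steps.} First I would fix a choice of the $k$ failure positions and bound the probability that the column process follows exactly that success/failure pattern. Working backwards from the last failure, at a failure step occurring when the current codimension is some $k_0$ with $k \le k_0 \le \eta n$, Theorem \ref{theorem:corank:main} (applied on the good event $\CE$) gives that $X$ lands in the current subspace with probability $q^{-k_0} + O(e^{-c\alpha_n n})$; since $k_0 \ge k$ at every failure step (the codimension can only be this large if failures have occurred, and it never drops below the final value $k$ except... wait, it stays $\ge k$ throughout once we are past the successes — more carefully: the codimension starts at $n$, ends at $k$, and at every failure step it is at least $k$), each failure contributes a factor $q^{-k} + O(e^{-c\alpha_n n})$. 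Multiplying the $k$ failure factors gives $\left(q^{-k} + O(e^{-c\alpha_n n})\right)^k = q^{-k^2} + O(e^{-c\alpha_n n})$ — here one uses $k \le \eta n$ so that the cross terms in the binomial expansion are dominated, e.g. $\binom{k}{j} q^{-k(k-j)} e^{-jc\alpha_n n/?} $ is controlled because $q \ge 2$ makes $q^{-k(k-j)} \le q^{-k}$ and $e^{-c\alpha_n n}$ is tiny compared to $q^{-k}$ when $k \le \eta n$ with $\eta$ small and $\alpha_n n \gg \log n$. The success steps contribute probabilities $\le 1$, so they are simply dropped. Finally, summing over the $\le n^k$ patterns and adding the probability $e^{-c\alpha_n n}$ that the good event $\CE$ fails yields $O\left(n^k(q^{-k^2} + e^{-c\alpha_n n})\right)$, absorbing the failure of $\CE$ into the error term (possibly after shrinking $c$).

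\textbf{Main obstacle.} The delicate point is the bookkeeping of the conditioning: Theorem \ref{theorem:corank:main} controls $\P(X_{n-k+1} \in W_{n-k} \mid \CE_{n-k} \wedge \codim(W_{n-k}) = k_0)$, an event measurable with respect to the first $n-k$ columns, whereas in the column-exposure process the failure steps can occur at arbitrary (interleaved) positions, not just among the last few columns. One must argue that the relevant good event can be taken on the $\sigma$-algebra generated by the columns exposed so far at each failure step, and that the estimate is uniform enough to be chained via Bayes' rule across all $k$ failure steps without the error terms compounding badly. Handling this uniformity — and checking that $k_0 \ge k$ at each failure so that every factor is at most $q^{-k} + O(e^{-c\alpha_n n})$ rather than something larger — is the crux; the rest is the routine binomial estimate sketched above. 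A cleaner alternative is to expose columns in reverse and note that once the corank reaches its final value $k$ it can only stay or... no, it fluctuates; the safe route is the forward pattern-counting argument with the observation that the number of failures up to any point, minus the number after, keeps the codimension $\ge k$ at every failure instant.
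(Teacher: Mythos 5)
Your sequential column-exposure with pattern-counting over failure positions is a genuinely different decomposition from the paper's. The paper instead observes that on the event $\rank(M_{n\times n})=n-k$ there exist $k$ columns all lying in the codimension-$k$ span $W_{n-k}$ of the remaining $n-k$ columns; it union-bounds over the $\binom{n}{k}\le n^k$ choices of this redundant set, then applies Theorem \ref{theorem:corank:main} once to the \emph{single fixed} subspace $W_{n-k}$ and uses that $X_{n-k+1},\dots,X_n$ are conditionally i.i.d.\ given $W_{n-k}$ to obtain $(q^{-k}+O(e^{-c\alpha_n n}))^k$ in one stroke. Testing all $k$ memberships against one fixed subspace sidesteps the chaining-of-conditioning difficulty you correctly flag as the crux of your route.

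Your sketch also contains a gap you notice but leave open: you assert that each failure occurs at a codimension $k_0$ with $k\le k_0\le\eta n$, but a failure among the first $(1-\eta)n$ columns occurs at codimension $>\eta n$, outside the range covered by Theorem \ref{theorem:corank:main}. This is repairable by a case split: for a high-codimension failure, use Odlyzko (Lemma \ref{lemma:O}) to bound the failure factor by $(1-\alpha_n)^{\eta n}=O(e^{-c'\alpha_n n})$, which is absorbed into the error term since the remaining factors are at most one; for a low-codimension failure use the theorem, which does govern the span of the first $j$ columns precisely when $j\ge(1-\eta)n$, matching the case split. With these repairs the forward argument would deliver the same bound (the $n^k$ arises from the same $\binom{n}{k}$ union bound), but at the cost of substantially more bookkeeping than the paper's single-subspace argument.
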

It seems plausible to get rid of the factor $n^k$ here (especially for fixed $\alpha$) but we do not attempt to do so, as the reader can check that any improvement along this line has little affect on the bounds in Theorem \ref{theorem:sur}.

\begin{proof}(of Corollary \ref{cor:corank'})  The event $\rank(M_{n \times n}) = n-k$ implies that there exist $k$ column vectors $X_{i_1},\dots, X_{i_k}$ which belong to the subspace of dimension $n-k$ generated by the remaining column vectors $X_i, i \neq i_1,\dots, i_k$. With a loss of a factor of $n^k$ in probability, we can assume that $\{i_1,\dots, i_k\}=\{n-k+1,\dots, n\}$. We then use Theorem \ref{theorem:corank:main} to show
  \begin{align*}
    &\P\Big(X_{n-k+1},\dots, X_n \in W_{n-k} \wedge \codim(W_{n-k})=k\Big) \\ 
   =&\P\Big(X_{n-k+1},\dots, X_n \in W_{n-k} \wedge \CE_{n-k} \wedge  \codim(W_{n-k})=k\Big) 
    +O(e^{-c \alpha_n n}) \\
    \le &\P\Big(X_{n-k+1},\dots, X_n \in W_{n-k}|\CE_{n-k} \wedge  \codim(W_{n-k})=k\Big) + O(e^{-c \alpha_n n})\\
    \le &\Big(q^{-k} + O(e^{-c \alpha_n n})\Big)^k + O(e^{-c \alpha_n n})=O(q^{-k^2} + e^{-c \alpha_n n}).
  \end{align*}
\end{proof}

Taking $k=1$ and $q=p \to \infty$ in Corollary \ref{cor:corank'} we then obtain Theorem \ref{theorem:M}.

\begin{cor}\label{cor:sing} Assume that the entries of $M_{n\times n}$ are iid copies of a discrete random variable $\xi$ taking integer values such that  
  $$\max_{x\in \Z} \P(\xi =x) \le 1 - \frac{C_0 \log n}{n}, \mbox{ for a sufficiently large constant $C_0$.}$$
  Then the matrix $M_{n \times n}$ is non-singular with probability at least $1 -e^{-c \alpha_n n}$.
\end{cor}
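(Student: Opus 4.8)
The plan is to reduce the integral singularity probability to a single prime field and then apply the corank estimate just established (Theorem~\ref{theorem:corank:main}), essentially only in its codimension\nobreakdash-one form, supplemented by Odlyzko's lemma for the very degenerate regime.

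\emph{Reduction to $\F_p$.} If $\det(M_{n\times n})=0$ over $\Z$ then $M_{n\times n}$ is singular over $\F_p$ for \emph{every} prime $p$, so $\Prob(\det(M_{n\times n})=0)\le\Prob(M_{n\times n}/p\text{ singular})$ for each $p$. Since the balance hypothesis holds modulo every prime (and if one only assumes $\max_{x\in\Z}\Prob(\xi=x)\le1-\alpha_n$, then after passing to a sufficiently large prime the push-forward of the law of $\xi$ to $\F_p$ is still $(\alpha_n/2)$-balanced, e.g.\ because $\limsup_{p}\max_{y}\Prob(\xi\equiv y\bmod p)\le\sqrt{1-\alpha_n}$), the matrix $M_{n\times n}/p$ has i.i.d.\ entries from an $\alpha_n$-balanced law on $\F_p$ obeying \eqref{eqn:alpha'}. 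I keep $p=q$ free throughout and send $p\to\infty$ only at the end, which is legitimate because the left-hand side does not depend on $p$.

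\emph{Sequential exposure.} Write $X_1,\dots,X_n$ for the columns of $M_{n\times n}/p$ and $W_j=\lang X_1,\dots,X_j\rang$. The matrix is singular iff some exposure step fails to enlarge the span, so decomposing over the \emph{first} such step,
\[
  \Prob(M_{n\times n}/p\text{ singular})=\sum_{j=1}^{n}\Prob\big(\dim W_{j-1}=j-1,\ X_j\in W_{j-1}\big).
\]
On $\{\dim W_{j-1}=j-1\}$ the codimension of $W_{j-1}$ is exactly $n-j+1$. For the terms with $n-j+1\le\eta n$ I apply Theorem~\ref{theorem:corank:main} with $k=k_0=n-j+1$ (this uses precisely the strengthening (i), that the codimension of $W_{n-k}$ need not be $1$): intersecting with the good event $\CE_{j-1}$, of probability $\ge1-e^{-c\alpha_n n}$, such a term is at most
\[
  \Prob(\CE_{j-1}^c)+\Prob\big(X_j\in W_{j-1}\,\big|\,\CE_{j-1},\ \codim W_{j-1}=n-j+1\big)\le q^{-(n-j+1)}+O(e^{-c\alpha_n n}).
\]
For the remaining terms, with $n-j+1>\eta n$, Odlyzko's bound (Lemma~\ref{lemma:O}) on $\{\dim W_{j-1}=j-1\}$ gives $(1-\alpha_n)^{n-j+1}\le e^{-\alpha_n\eta n}$. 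Summing the two groups,
\[
  \Prob(M_{n\times n}/p\text{ singular})\le\sum_{m=1}^{\lceil\eta n\rceil}q^{-m}+\lceil\eta n\rceil\,O(e^{-c\alpha_n n})+n\,e^{-\alpha_n\eta n}\le 2q^{-1}+e^{-c'\alpha_n n}
\]
for an absolute $c'>0$, where the last step uses \eqref{eqn:alpha'} with $C_0$ large enough to absorb the factors $\lceil\eta n\rceil$ and $n$ into the exponentials. Letting $p=q\to\infty$ yields $\Prob(\det(M_{n\times n})=0)\le e^{-c'\alpha_n n}$, which is the claim with $c=c'$.

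\emph{Main obstacle.} The delicate point is that a crude union bound over \emph{which} columns witness a rank drop, costing a factor $n^{k}$ at corank $k$, is fatal once $k$ is of order $n$, precisely because $\alpha_n$ is allowed to be as small as $C_0(\log n)/n$, so that $n^{k}e^{-c\alpha_n n}$ is not small. The way around this is to exploit the genuine $q^{-\codim}$ decay of Theorem~\ref{theorem:corank:main} (Odlyzko's $(1-\alpha_n)^{\codim}$ is worthless near $\codim=1$, which is the dominant regime) together with the freedom to take $p$ arbitrarily large, after which the lower bound \eqref{eqn:alpha'} swallows the remaining polynomial-in-$n$ losses. (Alternatively one may use symmetry to write $\Prob(M/p\text{ singular})\le n\,\Prob(X_n\in W_{n-1})$, split $\Prob(X_n\in W_{n-1})$ according to $\codim(W_{n-1})$, and note that $\codim(W_{n-1})>\eta n$ forces the top-left $(n-1)\times(n-1)$ block of $M/p$ to have rank $<(1-\eta/2)(n-1)$, to which the quadratic estimate Lemma~\ref{lemma:lowrank} applies with a bound $e^{-\Omega(\alpha_n n^2)}$.)
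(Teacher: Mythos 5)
Your proof is correct, and it in fact proceeds more carefully than the paper's own (rather terse) argument. The paper reduces to Corollary~\ref{cor:corank'} and ``takes a union bound'' over coranks $1\le k\le\eta n$; but that corollary carries a factor $n^k$ (from a union bound over which $k$ columns witness the rank drop), and the resulting sum $\sum_{k\le\eta n} n^k\bigl(q^{-k^2}+e^{-c\alpha_n n}\bigr)$ contains the term $n^{\eta n}e^{-c\alpha_n n}$, which is not $O(e^{-c'\alpha_n n})$ once $k$ may grow linearly in $n$ --- this is exactly the ``main obstacle'' you flag, and it persists even for fixed $\alpha_n$. Your route sidesteps this by decomposing the singularity event over the \emph{first} index $j$ at which $X_j\in W_{j-1}$ (a disjoint decomposition into only $n$ pieces), so that the combinatorial loss is a single factor of $n$ rather than $\binom{n}{k}$; you then apply Theorem~\ref{theorem:corank:main} with $k=k_0=n-j+1$ for $n-j+1\le\eta n$, and Lemma~\ref{lemma:O} for the complementary range. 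This uses precisely the strengthening $(i)$ noted after Theorem~\ref{theorem:corank:main} and is structurally the same estimate the paper itself performs inside the proof of Corollary~\ref{cor:corank'}, just deployed without the wasteful union over column subsets. The rest (choosing $p$ large so that both the reduction mod $p$ preserves the balance hypothesis and $\sum_m q^{-m}$ is negligible, then absorbing polynomial factors into $e^{-c'\alpha_n n}$ using $\alpha_n\ge C_0\log n/n$ with $C_0$ large) matches the paper's intent; one small caveat is that for genuinely unbounded $\xi$ the prime $p$ must also be taken large enough that the mass of $\xi$ outside $(-p/2,p/2)$ is at most $\alpha_n/2$, a point the paper's own phrase ``$\supp(\xi)\subset(-p,p)$'' glosses over but which your parenthetical correctly addresses in spirit. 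Your alternative via exchangeability and Lemma~\ref{lemma:lowrank} is also valid.
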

\begin{proof}(of Corollary \ref{cor:sing}) Choose a prime $p$ to be large such that $p \ge \max\{n^{2n}, |\det(M_{n\times n})|\}$ and $\supp(\xi) \subset (-p,p)$. It then suffices to show that $M_{n \times n}/p$ has full rank with probability at least $1 -e^{-c \alpha_n n}$. To this end, and by Corollary \ref{cor:fullrank} (to be discussed in the sequel), it suffices to bound the probability that $M_{n \times n}/p$ has corank $k$ between $1$ and $\eta n$, but then the statement follows from corollary \ref{cor:corank'} (stated for $\mu$ distributed according to \eqref{eqn:alpha'}) by taking union bound. 
\end{proof}

Note that the trick to pass the singularity problem over $\Z$ to over $\F_p$, and let $p \to \infty$, is not new. See for instance \cite{BVW, TV} and the references therein. 

Finally, we will also show the following more general variant of Theorem \ref{theorem:corank:main} for rectangular matrices $M_{n \times (n+u)}$ where $W_k=\lang X_{k+1},\dots, X_{n+u} \rang$. 
\begin{theorem}\label{theorem:corank:main'} Assume that $\mu$ is distributed according to either \eqref{eqn:alpha:q} or \eqref{eqn:alpha'} depending on $q$. Then there exist positive constants $c,\eta$ such that the following holds. Let $0\le u\le \eta n$ be given. Then for $k\le \eta n$, there exist an event $\CE_{n+u-k}$ on the $\sigma$-algebra generated by $X_1,\dots, X_{n+u-k}$ of probability at least $1- e^{-c\alpha_n n}$ such that for any $(k-u)^+\le k_0\le \eta n$
  $$\P_{X_{n+u-k+1}}\Big(X_{n+u-k+1} \in W_{n+u-k} \big| \CE_{n+u-k} \wedge \codim(W_{n+u-k})=k_0\Big) = q^{-k_0} + O(e^{-c \alpha_n n}).$$
\end{theorem}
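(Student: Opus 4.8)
The plan is to mirror the proof of Theorem \ref{theorem:corank:main} but keep careful track of the extra $u$ columns throughout. The setting is a matrix $M_{n \times (n+u)}$ with $0 \le u \le \eta n$, and we write $W_k = \lang X_{k+1}, \dots, X_{n+u} \rang$ for the span of the last $n+u-k$ columns. The target is the same kind of statement as before: an event $\CE_{n+u-k}$ on the $\sigma$-algebra generated by $X_1, \dots, X_{n+u-k}$, of probability $\ge 1 - e^{-c\alpha_n n}$, under which the last exposed column $X_{n+u-k+1}$ lands in $W_{n+u-k}$ with conditional probability $q^{-k_0} + O(e^{-c\alpha_n n})$, where $k_0$ is the codimension of $W_{n+u-k}$ and ranges over $(k-u)^+ \le k_0 \le \eta n$. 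The lower bound $(k-u)^+$ rather than $k$ is the one genuinely new feature: when there are $u$ spare columns, the span of the last $n+u-k$ columns can have codimension as small as $\max(k-u,0)$, so the admissible range of $k_0$ must be enlarged accordingly.

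The key steps, in order: First I would reduce to the square-matrix machinery by observing that $W_{n+u-k}$ is spanned by exactly $k$ column vectors $X_{n+u-k+1}, \dots, X_{n+u}$ living in $\F_q^n$; the ``ambient'' dimension is still $n$, only the number of columns has grown, so Hadamard-type bounds, Odlyzko's Lemma \ref{lemma:O}, and the quadratic estimate Lemma \ref{lemma:lowrank} apply unchanged. Second, I would define $\CE_{n+u-k}$ exactly as the good event in the proof of Theorem \ref{theorem:corank:main} is defined (a structural regularity event for the subspace generated by the columns exposed so far — controlling that the orthogonal complement, or the annihilator, contains no ``compressible'' or structured vectors, via the LCD/$\Cor$ machinery suggested by the notation in the preamble), simply applied to the first $n+u-k$ columns instead of the first $n-k$; since $n+u-k = n(1 + (u-k)/n)$ and both $u, k \le \eta n$, the number of exposed columns is still $(1 \pm O(\eta))n$, so all the counting and concentration estimates, which only care about this count being comparable to $n$, go through with the same constants after shrinking $\eta$. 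Third, for the conditional probability itself: given $\CE_{n+u-k}$ and $\codim(W_{n+u-k}) = k_0$, the event $X_{n+u-k+1} \in W_{n+u-k}$ is the event that the new column satisfies a fixed system of $k_0$ independent linear constraints determined by a basis of the annihilator of $W_{n+u-k}$; the regularity event $\CE_{n+u-k}$ guarantees this annihilator is ``spread out'' enough (large LCD) that a local central limit / Fourier estimate gives probability $q^{-k_0}(1 + o(1))$, with the error absorbed into $O(e^{-c\alpha_n n})$ exactly as in the square case. The only place the parameter $u$ enters nontrivially is in checking that $k_0 \ge (k-u)^+$ is the right constraint: the codimension of a span of $n+u-k$ vectors in $\F_q^n$ is automatically $\ge n - (n+u-k) = k-u$ when $k > u$, and $\ge 0$ always, so $k_0 \ge (k-u)^+$ is forced, and conversely all such values are attainable, which is why the hypothesis is stated with that range.

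The main obstacle I anticipate is not conceptual but bookkeeping: one must verify that every threshold, exponent, and smallness condition in the (somewhat delicate) proof of Theorem \ref{theorem:corank:main} — in particular the interplay between $\eta$, the constant $C_0$ in \eqref{eqn:alpha'}, and the $n^{-1/2+\eps}$ lower bound in \eqref{eqn:alpha:q} — is stable under replacing $n-k$ by $n+u-k$ with $u \le \eta n$. Concretely, the compressible-vector union bounds in the construction of $\CE_{n+u-k}$ run over roughly $\binom{n}{\cdot}$ choices and are beaten by factors like $(1-\alpha_n)^{cn}$; since both the loss and the gain scale with $n$, shrinking $\eta$ (hence $u$) by a bounded factor preserves all inequalities. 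I would therefore present the proof as: ``The argument is verbatim that of Theorem \ref{theorem:corank:main}, with $n-k$ replaced by $n+u-k$ throughout; we indicate only the two points where $u$ plays a role,'' namely the admissible range of $k_0$ and the re-derivation that $(1+O(\eta))n$ exposed columns suffice for the structural estimates. A secondary, minor point to handle is that when $u \ge k$ the lower endpoint $(k-u)^+ = 0$ allows $k_0 = 0$, i.e.\ $W_{n+u-k} = \F_q^n$ already; in that degenerate case the claimed probability $q^{0} = 1$ is trivially (and correctly) an upper bound, so nothing needs to be proved there.
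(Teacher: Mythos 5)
Your high-level plan — rerun the proof of Theorem \ref{theorem:corank:main} with $n+u-k$ exposed columns in place of $n-k$, observe that the ambient space is still $\F_q^n$, and absorb the bookkeeping by shrinking $\eta$ — is exactly what the paper does. However, two points in your description of the inner workings need correction.

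First, the good event $\CE_{n+u-k}$ is \emph{not} built from LCD or least-common-denominator machinery; that notation is unused boilerplate in the preamble. The actual decomposition in the proof of Theorem \ref{theorem:corank:main} (and hence of this theorem) is finite-field Fourier-analytic: $\CE_{n+u-k}$ is the conjunction of the events that $W_{n+u-k}$ is not $\delta$-sparse (Proposition \ref{prop:sparse}), not semi-saturated (Proposition \ref{prop:semi-sat} via Lemma \ref{lemma:inverse:semi}), and not unsaturated (Proposition \ref{prop:unsat} via the swapping Theorem \ref{theorem:swapXY}). On this event the conclusion $|\P(X\in W_{n+u-k})-q^{-k_0}|\le e^{-d\alpha_n n}$ holds purely by the definitions of those three classes; there is no separate ``local CLT'' step. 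If you tried to execute your plan literally, you would look in vain for an LCD argument, so you would want to identify the sparse/semi-sat/unsat trichotomy before anything else.

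Second, the paper does not actually rerun all three propositions with $n+u-k$ columns; it uses monotonicity to get two of them for free. Adding columns only enlarges the span, so if $\lang X_1,\dots,X_{n-k}\rang$ is not $\delta$-sparse then neither is $\lang X_1,\dots,X_{n+u-k}\rang$, disposing of the sparse case with no work. Similarly, $\P(X_1,\dots,X_{n+u-k}\in V)\le\P(X_1,\dots,X_{n-k}\in V)$, so the semi-saturated bound carries over unchanged. Only the unsaturated case requires a genuine re-derivation, and there the sole change is that the loss factor $\binom{n-k}{r+s}$ in Theorem \ref{theorem:swapXY} becomes $\binom{n+u-k}{r+s}$, which is still dominated by $(3/2)^{\delta_1 n/4}$ once $\eta$ is small relative to $\delta_1$. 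Your ``shrink $\eta$'' instinct is thus the correct resolution, but it is needed at exactly one place, not everywhere. Your remarks about the admissible range $k_0\ge(k-u)^+$ and the degenerate $k_0=0$ endpoint are both correct and match the paper.
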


The rest of the appendix is mainly dedicated to verify Theorem \ref{theorem:corank:main}. The proof of Theorem \ref{theorem:corank:main'} will be deduced shortly. As already mentioned, our approach mainly follows \cite{M1}. 
\vskip .1in

{\bf \underline{Part I: proof of Theorem \ref{theorem:corank:main}}}\label{appendix:proof}: In what follows the constants $\eta, \beta, \delta, d$ are sufficiently small but fixed (see for instance \eqref{eqn:d} for a choice of $d$), and $\alpha_n$ is allowed to depend on $n$ as from \eqref{eqn:alpha:q} or \eqref{eqn:alpha'} for sufficiently large constant $C_0$. The only place we have to treat \eqref{eqn:alpha:q} and \eqref{eqn:alpha'} separately is in the proof of Proposition \ref{prop:sparse} below.

We first note that Odlyzko's lemma in fact holds in any finite field.
\begin{lemma}\label{lemma:O'} For a deterministic subspace $V$ of $\F_q^n$ and a random vector $X$ of iid entries from an $\alpha_n$-balanced distribution
  $$\P(X \in V) \le (1-\alpha_n)^{\codim(V)}.$$
\end{lemma}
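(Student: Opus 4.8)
\textbf{Proof plan for Lemma \ref{lemma:O'}.}

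The plan is to mimic exactly the argument already given for Lemma \ref{lemma:O}, checking that nothing used there was special to the prime field $\F_p$ beyond the existence of a basis and of uniquely determined coordinates. First I would set $d = \codim(V)$ and, if $d = n$ there is nothing to prove (the bound is $1$), so assume $d < n$; let $\{v_1,\dots,v_{n-d}\}$ be a basis of $V$. Passing to the orthogonal complement is not needed; instead, exactly as in Lemma \ref{lemma:O}, I would extend to a description of $V$ by linear constraints: there exist linear functionals $\ell_1,\dots,\ell_d$ on $\F_q^n$ whose common kernel is $V$. By permuting coordinates I may assume the $d\times d$ minor of the constraint matrix on the last $d$ coordinates is invertible over $\F_q$; this is where I use that $\F_q$ is a field, so that a nonzero maximal minor can be selected and inverted.

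Next, condition on the first $n-d$ entries $\xi_1,\dots,\xi_{n-d}$ of $X$. Solving the system $\ell_j(X) = 0$, $1\le j\le d$, for the last $d$ coordinates, the invertibility of the selected minor shows that each of $\xi_{n-d+1},\dots,\xi_n$ is forced to equal a specific (deterministic, given the conditioning) element of $\F_q$. Since $\mu$ is $\alpha_n$-balanced, in particular $\mu(s) \le \mu(s+\{0\}) \le 1-\alpha_n$ for every singleton $s\in\F_q$, so each of these $d$ coordinates lands on its prescribed value with probability at most $1-\alpha_n$. By independence of the entries of $X$, the event $X\in V$ has probability at most $(1-\alpha_n)^d = (1-\alpha_n)^{\codim(V)}$, as claimed. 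Taking the expectation over the conditioning changes nothing since the bound is uniform.

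I do not anticipate a genuine obstacle here: the only point requiring a word of care is the replacement of "spanning vectors restricted to $d$ coordinates are independent" (the formulation in Lemma \ref{lemma:O}) by "defining functionals restricted to $d$ coordinates are independent", i.e. phrasing $V$ via its equations rather than its spanning set; both are routine linear algebra over an arbitrary field, and the balancedness hypothesis in the general $\F_q$ setting was defined precisely so that the singleton bound $\mu(s)\le 1-\alpha_n$ still holds (take $T=\{0\}$). Hence the proof is essentially identical to that of Lemma \ref{lemma:O}.
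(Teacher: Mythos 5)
Your proposal is correct and takes essentially the same approach as the paper, which states Lemma \ref{lemma:O'} without a separate proof precisely because nothing in the argument for Lemma \ref{lemma:O} used primality of $p$; your phrasing via $d=\codim(V)$ constraint functionals with an invertible $d\times d$ minor is a dual but equivalent packaging of the paper's argument, which instead picks a basis $H_1,\dots,H_{\dim V}$ of the subspace, permutes so the restrictions to the first $\dim V$ coordinates are independent, and concludes the remaining $\codim(V)$ entries are forced. One small slip worth flagging: the case that is genuinely trivial (bound equal to $1$) is $\codim(V)=0$, not $\codim(V)=n$; when $\codim(V)=n$ the claimed bound is $(1-\alpha_n)^n$ and is not automatic, though your argument does cover it with vacuous conditioning (all $n$ coordinates are forced to be $0$), so no special casing is actually needed.
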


\begin{corollary}\label{cor:fullrank} Let $X_1,\dots, X_{n-k}$ be the columns of $M_{n\times n}$. Then the probability that $X_1,\dots, X_{n-k}$
  are linearly independent in $\F_q^n$ is at least $1 -n(1-\a_n)^{k}.$ 
\end{corollary}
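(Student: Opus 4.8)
The plan is to reduce to Odlyzko's lemma (Lemma \ref{lemma:O'}) column by column, exposing $X_1,\dots,X_{n-k}$ one at a time. First I would observe that linear independence of $X_1,\dots,X_{n-k}$ is equivalent to the event that for each $1\le j\le n-k$, the vector $X_j$ does not lie in the span $V_{j-1}:=\langle X_1,\dots,X_{j-1}\rangle$ (with $V_0=\{0\}$). So the complementary (bad) event is contained in $\bigcup_{j=1}^{n-k}\{X_j\in V_{j-1}\}$, and I would bound its probability by a union bound over $j$.

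Next I would estimate each term $\P(X_j\in V_{j-1})$. Conditioning on $X_1,\dots,X_{j-1}$, the subspace $V_{j-1}$ is deterministic and has dimension at most $j-1$, hence codimension at least $n-(j-1)\ge n-(n-k-1) = k+1 > k$ (using $j\le n-k$). Since $X_j$ has i.i.d.\ $\alpha_n$-balanced entries and is independent of $V_{j-1}$, Lemma \ref{lemma:O'} gives $\P(X_j\in V_{j-1}\mid X_1,\dots,X_{j-1})\le (1-\alpha_n)^{\codim(V_{j-1})}\le (1-\alpha_n)^{k}$. Taking expectation over the conditioning removes it, so $\P(X_j\in V_{j-1})\le (1-\alpha_n)^k$ for every $j$.

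Finally, summing over the at most $n$ values of $j$ yields $\P(X_1,\dots,X_{n-k}\text{ linearly dependent})\le n(1-\alpha_n)^k$, and taking complements gives the claimed bound $1-n(1-\alpha_n)^k$. There is essentially no obstacle here: the only mild point to be careful about is that the subspace $V_{j-1}$ is random, so one must phrase the application of Lemma \ref{lemma:O'} conditionally (or equivalently, take a union bound over all possible realizations of $V_{j-1}$, which is what the deterministic statement of Lemma \ref{lemma:O'} is designed to absorb). This is a routine conditioning argument rather than a genuine difficulty.
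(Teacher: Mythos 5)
Your proof is correct and follows essentially the same route as the paper's: expose the columns sequentially, union-bound over the index of the first column falling into the span of the earlier ones, and control each term with Odlyzko's lemma applied conditionally. (If anything, you are slightly more careful than the printed proof in citing the $\F_q$ version, Lemma \ref{lemma:O'}, which is the one actually needed here, and in spelling out the conditioning.)
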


\begin{proof}(of Corollary \ref{cor:fullrank}) Let $0\le i\le n-k-1$ be smallest such that $X_{i+1} \in \Sp(X_1,\dots,X_i)$. By Lemma \ref{lemma:O}, this event is bounded by $(1-\alpha_n)^{n-i}$. Summing over $0\le i\le n-k-1$, the probability under consideration is bounded by $n(1-\alpha_n)^{k}$. 
\end{proof}

\subsection{Sparse subspace} Let $0<\delta, \eta$ be small constants (independently from $\alpha_n$). Given a vector space $H \subset \F_q^n$, we say that $H$ is {\it $\delta$-sparse} if there is a non-zero vector $w$ with $|\supp(w)| \le \delta n$ (i.e. $w$ is {\it $\delta$-sparse}) such that $w \perp H$, where $\supp(w)$ is the set of non-zero coordinates of $w$.

\begin{prop}[random subspaces are not sparse]\label{prop:sparse} Let $0<\eps_0<1/2$ be any fixed constants. Then for any $0 \le \delta, \eta$ such that $ \delta+\eta \le \eps_0$, and with $\alpha_n$ from \eqref{eqn:alpha:q} or \eqref{eqn:alpha'} the following holds for $0\le k < \eta n$: with probability at least $1 -e^{-c\alpha_n n}$ with respect to $X_1,\dots,X_{n-k}$  the random subspace $W_{n-k}$ is not $\delta$-sparse. Here $c=c(\eps_0)$ is an absolute constant.
\end{prop}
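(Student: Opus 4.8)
The plan is to union bound over all candidate sparse normal vectors $w$, so we need to show that for a fixed nonzero $\delta$-sparse $w$, the event $w \perp W_{n-k}$ (equivalently $\lang w, X_i\rang = 0$ for all $i \le n-k$) is exponentially unlikely, and that the number of relevant $w$ is not too large to overwhelm this. The crucial point for handling the \emph{sparse} regime \eqref{eqn:alpha'} is that the anticoncentration $\P(\lang w, X_i\rang = 0) \le 1-\alpha_n$ we get from $\alpha_n$-balancedness depends only on $w$ being nonzero, not on its support size; so for one fixed $w$ and $n-k$ independent columns, $\P(w \perp W_{n-k}) \le (1-\alpha_n)^{n-k} \le e^{-\alpha_n(n-k)} \le e^{-\alpha_n n(1-\eta)}$. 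The difficulty is the entropy of the set of sparse $w$: naively there are $\binom{n}{\delta n} q^{\delta n}$ of them, and when $q$ is large this is not controlled by $e^{-c\alpha_n n}$ alone. I would therefore quotient out scaling — it suffices to consider $w$ up to scalar multiples, and moreover we only need \emph{one} representative per ``projective sparse direction'' — but more importantly I would split by the support size $s = |\supp(w)|$ and use a sharper per-vector bound: if $\supp(w) \subset S$ with $|S| = s$, then already restricting to $n - s$ columns whose coordinates outside $S$ are free, one sees $\lang w, X_i \rang$ is a nontrivial balanced linear form, but the cleaner route is that $\lang w, X_i \rang = 0$ forces, after conditioning on all but one coordinate of $X_i$ inside $S$, that coordinate to take a fixed value — probability $\le 1-\alpha_n$ — so $\P(w \perp W_{n-k}) \le (1-\alpha_n)^{n-k}$ regardless, and the count of $w$ with support exactly $S$ and fixed is at most $q^{s}$.

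So the union bound reads, roughly,
\[
  \P(W_{n-k} \text{ is } \delta\text{-sparse}) \le \sum_{s=1}^{\delta n} \binom{n}{s} (q-1)^{s} (1-\alpha_n)^{n-k},
\]
after dividing by $q-1$ for scaling (pick the first nonzero coordinate of $w$ to be $1$, giving $(q-1)^{s-1}$ choices, which I will just bound by $q^{s}$). Now $\binom{n}{s} q^{s} \le (enq/s)^{s} \le (enq)^{\delta n}$ for $s \le \delta n$, so the whole sum is at most $\delta n \cdot (enq)^{\delta n} e^{-\alpha_n(n-k)}$. For the case \eqref{eqn:alpha:q}, where $q$ can grow but $\alpha_n \ge n^{-1/2+\eps}$, this is $e^{-\alpha_n n(1 + o(1))} \cdot e^{\delta n \log(enq)}$, and choosing $\delta$ small enough in terms of $\eps_0$ and the growth rate of $q$ allowed kills the entropy term against $e^{-c \alpha_n n}$ — here one needs $\delta \log q = o(\alpha_n)$, which is where the precise hypotheses on $q$ versus $n$ in the paper's setup must be invoked. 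For the case \eqref{eqn:alpha'} with $q = p$ possibly of moderate size but $\alpha_n$ only $\Theta(\log n / n)$: then $e^{-\alpha_n n} = n^{-C_0}$-ish, and the entropy term $(enp)^{\delta n}$ is genuinely exponential in $n$, so the naive bound \emph{fails} and one must do better.

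Hence the main obstacle, and the place where I expect the real work to be, is the sparse-$\alpha_n$ regime \eqref{eqn:alpha'}: there, rather than bounding $\P(w \perp W_{n-k})$ by $(1-\alpha_n)^{n-k}$, I would exploit that a \emph{fixed} column $X_i$ has, with probability bounded away from $1$ by an \emph{absolute} constant (not $\alpha_n$!), a nonzero coordinate in $\supp(w)$ with an atom-free-ish behavior — more precisely, use that the event $w \perp W_{n-k}$ together with ``many columns $X_i$ have at least two nonzero entries in $\supp(w)$'' forces extra constraints — or alternatively reduce to the classical argument (as in Komlós / Kahn–Komlós–Szemerédi) that the probability a random $\pm$-type vector is orthogonal to a fixed sparse $w$ is $O(s^{-1/2})$ by Erdős–Littlewood–Offord type estimates, giving $\P(w \perp W_{n-k}) \le (C/\sqrt{s})^{n-k}$ once $s$ is not tiny, and for tiny $s$ (say $s \le s_0$ constant) handle those separately since there the count $\binom{n}{s}$ is only polynomial. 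Combining: the contribution of supports of size $s$ is at most $\binom{n}{s} p^{s} (C/\sqrt s)^{\,n-k}$, and since $s \le \delta n$ with $\delta$ small one has $(C/\sqrt s)^{n} \le (C/\sqrt s)^{n}$ and $\binom ns p^s \le (enp/s)^s$, so we need $(enp/s)^{s/n} \cdot (C/\sqrt s) < e^{-c\alpha_n}$-compatible, which holds for $s$ in the window $[\,s_0, \delta n\,]$ with $\delta$, $s_0$ chosen appropriately in terms of $\eps_0$ and $C_0$; the range $1 \le s < s_0$ is covered by the crude $n(1-\alpha_n)^{n-k} = n \cdot e^{-\Omega(\alpha_n n)}$ bound times a polynomial factor $n^{s_0}$. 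Assembling these pieces, with $\delta + \eta \le \eps_0$ ensuring $n - k \ge (1-\eps_0)n$ uniformly, yields the claimed $1 - e^{-c(\eps_0)\alpha_n n}$.
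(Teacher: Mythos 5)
Your plan has the right skeleton (union bound over sparse candidate normals, split by support size, use a Littlewood--Offord estimate for moderate supports), and you correctly identify that the regime of small $\alpha_n$ is where the trouble lies. But there are two genuine gaps, and the first one you explicitly flag yourself but do not resolve.

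First, your union bound is over the actual vector $w$, not just its support, which forces the factor $q^s$ (or $(q-1)^s$) into every term. You note that absorbing this requires something like $\delta \log q = o(\alpha_n)$ and say this ``is where the precise hypotheses on $q$ versus $n$ in the paper's setup must be invoked'' --- but no such hypothesis exists: Proposition~\ref{prop:sparse} is stated for \emph{any} prime power $q$, with $\delta$ allowed to depend only on the fixed constant $\eps_0$. So for $q$ growing, say, exponentially in $n$, your bound genuinely fails. The paper avoids this entirely by never union bounding over $w$. Instead it observes that on the event that $\sigma$ is a \emph{minimal} support, the restricted columns $X_1|_\sigma,\dots,X_{n-k}|_\sigma$ span a codimension--$1$ subspace of $\F_q^\sigma$, so one can pick $t-1$ of them that are linearly independent, condition on those $t-1$ columns (which determine the orthogonal direction $w|_\sigma$ up to scalar), and then only pay for the remaining $n-k-t+1$ columns landing in the resulting hyperplane. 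The union bound is thus over $\binom{n}{t}$ supports and $\binom{n-k}{t-1}$ index sets, with no $q$--dependent entropy at all, giving $\binom{n}{t}\binom{n-k}{t-1}\bigl(\tfrac1q + \tfrac{2}{\sqrt{\alpha_n t}}\bigr)^{n-k-t+1}$.

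Second, for the sparse regime \eqref{eqn:alpha'} your suggestion --- use the LO bound $\P(w\perp W_{n-k}) \le (C/\sqrt{s})^{n-k}$ for non-tiny $s$ and handle ``tiny'' $s$ by a crude polynomial count --- underestimates the range that must be treated separately. The finite-field LO bound (Theorem~\ref{theorem:LO}) yields $\P(X\cdot w = 0)\le \tfrac1q + \tfrac{2}{\sqrt{\alpha_n s}}$, which is only nontrivially smaller than $1$ once $\alpha_n s$ is bounded below, i.e.\ $s \gtrsim 1/\alpha_n$. When $\alpha_n = \Theta(\log n/n)$ this threshold is $\Theta(n/\log n)$, and for $s$ up to there $\binom{n}{s}$ is nowhere near polynomial in $n$, so the ``tiny $s$'' fallback does not close the gap. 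The paper handles exactly this range ($1 \le t \le 144\alpha_n^{-1}$, Subcase 2.2) by a different, structural argument: Claim~\ref{claim:zeros} shows that with probability $1-e^{-c\alpha_n n}$, for \emph{every} small $\sigma$ simultaneously there are two consecutive columns $X_i,X_{i+1}$ such that $(X_{i+1}-X_i)|_\sigma$ has exactly one nonzero entry, which deterministically rules out any $w$ supported in $\sigma$ being orthogonal to $W_{n-k}$. This is the missing ingredient that makes the small-$t$ sparse case go through, and I do not see how your sketch reaches it.
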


\begin{proof}(of Proposition \ref{prop:sparse}) 
%  It suffices to verify for $u=0$. 
  For $\sigma \subset [n]$ with $1\le t=|\sigma| \le \delta n$, let $\CE_\sigma$ be the event that  $W_{n-k}$ is orthogonal to a vector $w$ with $\supp(w) =\sigma$, but is not orthogonal to any vector $w'$ with $|\supp(w')| \le t-1$. Note that in this case the $\sigma$-restricted vector $w|_\sigma$ is orthogonal to the $\sigma$-restricted column vectors $X_{1}|_\sigma,\dots, X_{n-k}|_\sigma$.

 % \corE{
  The dimension of the annihilator of $W_{n-k}$ in $\F_q^\sigma$ 
    and the dimension of $\Sp\left( X_{1}|_\sigma,\dots, X_{n-k}|_\sigma \right)$ sum to $t.$  If the annihilator were more than $1$ dimensional, there would necessarily exist a nonzero linear combination of annihilators with support strictly contained in $\sigma.$ Hence it follows that the column vectors $X_{1}|_\sigma,\dots, X_{n-k}|_\sigma$ span a subspace of dimension $t-1$, and there are $t-1$ linearly independent column vectors $X_{i_1}|_\sigma,\dots, X_{i_{t-1}}|_\sigma$ in $\F_q^{\sigma}$.
    %}

    %\corE{
      We therefore define the event $\CE_{\sigma, i_1,\dots, i_{t-1}}$
      to be that
      \begin{enumerate}
	\item $X_{i_1}|_\sigma,\dots, X_{i_{t-1}}|_\sigma$ are linearly independent,
	\item $X_i|_\sigma \in \Sp(X_{i_1}|_\sigma,\dots, X_{i_{t-1}}|_\sigma)$ for all $1 \leq i \leq n-k,$
	\item the annihilator of $ \Sp(X_{i_1}|_\sigma,\dots, X_{i_{t-1}}|_\sigma)$ in $\F_q^\sigma$ contains no nonzero vectors with support begin a proper set of $\sigma,$
      \end{enumerate}
      and observe that $\CE_\sigma$ is a union over all such $\CE_{\sigma, i_1,\dots, i_{t-1}}$.
    %}

  {\bf Case 1.} If $ 144 \alpha_n^{-1} \le t \le \delta n$, then by Theorem \ref{theorem:LO} (whose proof is given in {\bf Part II}),

  \[
    \P(\CE_{\sigma, i_1,\dots, i_{t-1}}) \le 
    \left(\frac{1}{q} + \frac{2}{\sqrt{\alpha_n t}}\right)^{n-k-t+1}\le 
    \left(\frac{1}{q} + \frac{2}{\sqrt{\alpha_n t}}\right)^{(1-\eps_0) n} \le  
    \left(\frac{2}{3}\right)^{n/2}  .
  \]
  Thus 
  \[
    \P(\CE_\sigma) \le 
    \sum_{i_1,\dots, i_{t-1}} \P(\CE_{\sigma, i_1,\dots, i_{t-1}})  \le 
    \binom{n-k}{t-1} \left(\frac{2}{3}\right)^{n/2}.
  \]
  So
  \[
    \sum_{\sigma, |\sigma| \ge 144 \alpha_n^{-1} }\P(\CE_\sigma) \le 
    \sum_{144 \alpha_n^{-1} \le 
  t \le \delta n}\binom{n}{t}\binom{n-k}{t-1} \left(\frac{2}{3}\right)^{n/2} \le \left(\frac{2}{3}\right)^{n/4}.
  \]
  provided that $\ep_0$ (and hence $\delta$) are sufficiently small.

  {\bf Case 2.} If $1\le t \le 144 \alpha_n^{-1}$.  We use the simple bound 
  $$\P\Big(X_i|_\sigma \in \Sp(X_{i_1}|_\sigma,\dots, X_{i_{t-1}}|_\sigma)\Big) \le 1-\alpha_n.$$ 
  Hence  
  $$\P(\CE_\sigma) \le \sum_{i_1,\dots, i_{t-1}} \P(\CE_{\sigma, i_1,\dots, i_{t-1}})  \le \binom{n-k}{t-1}  (1-\alpha_n)^{(1-\delta -\eta)n} \le \binom{n-k}{t-1}  (1-\alpha_n)^{(1-\eps_0) n} .$$
  Consequently,
  $$\sum_{\sigma, |\sigma| \le 144 \alpha_n^{-1} }\P(\CE_\sigma) \le \sum_{t\le  144 \alpha_n^{-1}} \binom{n}{t} \binom{n-k}{t-1}  (1-\alpha_n)^{(1-\ep_0) n}.$$

  {\bf Subcase 2.1.} Assume that $\alpha_n$ is from \eqref{eqn:alpha:q}. Then as $\alpha_n \ge n^{-1/2+\eps}$, the above can be easily bounded by
  $$\sum_{\sigma, |\sigma| \le 144 \alpha_n^{-1} }\P(\CE_\sigma) \le (1-\alpha_n)^{\eps_0 n/2}.$$

  {\bf Subcase 2.2.} Assume that $\alpha_n$ is from \eqref{eqn:alpha'}, 
  $$\frac{C_0 \log n}{n} \le \alpha_n = 1 -\max_{x} \mu(x) \le n^{-1/2+\eps}.$$
  We will rely on the following observation, which is a simple variant of Lemma \cite[Lemma 3.2]{BR}. 
  \begin{claim}\label{claim:zeros} The following holds with probability at least $1- e^{-c\alpha_n n}$ with respect to $X_{1},\dots, X_{n-k}$. For any $1\le t \le 144 \alpha_n^{-1}$, and any $\sigma \in \binom{[n]}{t}$, there are at least two columns $X_i,X_{i+1}$ whose restriction $(X_{i+1}-X_i)|\sigma$ has exactly one non-zero entry. 
  \end{claim}

%  \corE{
  Suppose that $w \in \F_q^n$ has support $\sigma$ of size $t=|\sigma|=|\supp(w)|\le 144 \alpha_n^{-1}.$  Then,
  conditioning on the event in the lemma,
  there is some $1 \leq i \leq n-k-1$ so that $(X_{i+1}-X_i)|_\sigma$ has exactly one nonzero entry, and hence $w|\sigma$ is not orthogonal to it.
Hence, it cannot be simultaneously orthogonal to all $X_i, 1 \le i\le n-k$.  Thus, it suffices to prove the claim.
%}
  %because then $w|\sigma$ is orthogonal to all $(X_{i+1}-X_i)|_\sigma, 1\le i\le n-k-1$, and hence would have a zero entry. Thus Proposition \ref{prop:sparse} also follows in this case. It remains to justify Claim \ref{claim:zeros}.

\begin{proof}[Proof of Claim \ref{claim:zeros}] For each $i\in \{1, 3,\dots, 2\lfloor (n-k-1)/2 \rfloor +1 \}$, consider the vectors $Y_i = X_{i+1}-X_i$. The entries of this vector are iid copies of the symmetrized random variable $\psi =\xi-\xi'$, where $\xi',\xi$ are independent and have distribution $\mu$. With $1-\alpha_n'=\P(\psi=0)$, then we have 
    $$\alpha_n \le \alpha_n'  \le 2\alpha_n$$
    as this can be seen by
    $$(1-\alpha_n)^2 \le \max_{x} \P(\xi= x)^2 \le \sum_x \P(\xi=x)^2=\P(\psi =0) \le \max_{x} \P(\xi= x) = 1- \alpha_n.$$
    Now let $p_\sigma$ be the probability that all $Y_i|_\sigma, i\in \{1, 3,\dots, 2\lfloor (n-k-1)/2 \rfloor +1 \}$ fail to have exactly one non-zero entry, then by independence of the columns and of the entries
    $$p_\sigma = (1- t \alpha_n' (1-\alpha_n')^{t-1})^{(n-k)/2} \le (1 - t\alpha_n' e^{- t\alpha_n'})^{n-k} \le e^{-n t\alpha_n' e^{-t \alpha_n'}/2}.$$
    Notice that as $1\le t \le 8^{3/\eps_0} \alpha_n^{-1}$, $e^{-t \alpha_n'}/2 \ge c$ for some positive constant $c$, and hence 
    $$ e^{-n t\alpha_n' e^{-t \alpha_n'}/2} \le (e^{ -c n \alpha_n' })^t \le n^{-cC_0 t /2} e^{ -c n \alpha_n/2 }.$$
    Thus 
    $$\sum_{1\le t \le 144 \alpha_n^{-1}}\sum_{\sigma \in \binom{[n]}{t}} p_\sigma \le \sum_{1\le t \le 144 \alpha_n^{-1}} \binom{n}{t}  e^{-(n-k) t\alpha_n e^{-t \alpha_n}} \le  \sum_{1\le t \le 144 \alpha_n^{-1}}  (n^t n^{-cC_0 t /2}) e^{ -c n \alpha_n/2} <e^{ -c n \alpha_n/2},$$
    provided that $C_0$ is sufficiently large.
  \end{proof}
  Our proof of Proposition \ref{prop:sparse} is then complete by combining the cases considered above.
\end{proof}

\begin{remark}
  Our treatment here is quite different from \cite[Section 2.1]{M1} as there is no need to use \cite[Proposition 2.3]{M1} (which states that ``Let $Z_1,\dots,Z_r$ be non-trivial iid random vectors in $\F_q^n$, then $\P(Z_1,\dots,Z_r\in V|Z_1,\dots, Z_r \mbox{ are linearly independent}) \le \P(Z\in V)^r$." An elementary counterexample to this proposition is that  $Z_i$ are chosen uniformly at random from $\{a_1,2a_1, 3a_1, a_2,a_3\}$, where $a_1 \notin V$ and $a_2,a_3$ are linearly independent in $V$: in this case the LHS probability bound is larger than the RHS \footnote{We thank M. M. Wood for pointing out the mistake, as well as for supplying a counterexample.}. Also, Maples did not provide {\it any treatment} for the sparse case (Case 2) for Proposition \ref{prop:sparse} in \cite{M1} or \cite{M-thesis}, which we have added. See also the remark after Lemma \ref{lemma:inverse:semi} below.
\end{remark}

To conclude, given constants $\eta, \delta$ and the parameter $\alpha_n$ from \eqref{eqn:alpha:q} or \eqref{eqn:alpha'}, let $\CE_{n-k, dense}$ denote the event considered in Proposition \ref{prop:sparse}, 
\begin{equation}\label{eqn:dense}
  \P(\CE_{n-k, dense}) \ge 1 - e^{-c \alpha_n n}.
\end{equation}

We next turn to another type of subspace.

\subsection{Semi-saturated subspace} Given $0<\alpha_n, \delta, d<1$. We call a subspace $V$ of co-dimension $k_0$ {\it semi-saturated} (or {\it semi-sat} for short), where $k_0\le \eta n$, if $V$ is not $\delta$-sparse and 
\begin{equation}\label{eqn:semi-sat:def}
  e^{-d \alpha_n n} <  |\P(X \in V) - \frac{1}{q^{k_0}}| \le  \frac{16}{q^{k_0}}.
\end{equation}
Here we assume 
$$e^{-d \alpha_n n}< \frac{16}{q^{k_0}}.$$
If this condition is not satisfied (such as when $q$ is sufficiently large), then the semi-saturated case can be omitted.

\begin{lemma}\cite[Proposition 2.5]{M1}\label{lemma:inverse:semi} For all $\beta>0$ and $\delta>0$ there exists $0<d=d(\beta, \delta)<1$ in the definition of semi-saturation and a deterministic set $\CR \subset \F_q^n$ of non $\delta$-sparse vectors and of size $|\CR| \le (2\beta^\delta)^n q^n$ such that  every semi-saturated $V$ is orthogonal to a vector $R \in \CR$. In fact the conclusion holds for any subspace $V$ satisfying the LHS of \eqref{eqn:semi-sat:def}.
\end{lemma}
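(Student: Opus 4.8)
The plan is to build $\CR$ out of the ``structured'' normal vectors that a Fourier expansion of $\P(X\in V)$ is forced to produce, and then to bound $|\CR|$ by an inverse--Littlewood--Offord type count.

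\textbf{Step 1: producing one structured normal vector.} Fix a nontrivial additive character $e$ of $\F_q$ and set $\widehat\mu(s):=\E\,e(s\xi)$ (so $|\widehat\mu|\le 1$). If $V$ has codimension $k_0$, write $V=\ker A$ with $A\in\F_q^{k_0\times n}$ of full row rank; the rows of $A$ span $V^\perp$ and $t\mapsto A^{\top}t$ is a bijection $\F_q^{k_0}\to V^\perp$, so Fourier inversion on $\F_q^{k_0}$ gives
$$\P(X\in V)-q^{-k_0}=q^{-k_0}\sum_{w\in V^\perp\setminus\{0\}}\prod_{j=1}^{n}\widehat\mu(w_j),$$
whence $\big|\P(X\in V)-q^{-k_0}\big|<\max_{w\in V^\perp\setminus\{0\}}\prod_{j}|\widehat\mu(w_j)|$, there being fewer than $q^{k_0}$ summands. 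Thus any $V$ satisfying the left inequality of \eqref{eqn:semi-sat:def} has a normal vector $w$ with $\prod_j|\widehat\mu(w_j)|>e^{-d\alpha_n n}$; and if $V$ is not $\delta$-sparse then, by the very definition of $\delta$-sparseness, \emph{every} nonzero vector of $V^\perp$ has support $>\delta n$ --- in particular this $w$. So I would simply take
$$\CR:=\Big\{\,w\in\F_q^{n}\ :\ |\supp(w)|>\delta n\ \text{ and }\ \textstyle\prod_{j=1}^{n}|\widehat\mu(w_j)|>e^{-d\alpha_n n}\,\Big\},$$
a deterministic set of non-$\delta$-sparse vectors which by construction is orthogonal to every semi-saturated $V$ (and to every non-$\delta$-sparse $V$ obeying the left side of \eqref{eqn:semi-sat:def}); it remains only to bound $|\CR|$.

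\textbf{Step 2: the size of $\CR$.} This is where $\alpha_n$-balancedness is used, via a ``flat spectrum is small'' estimate. Over $\F_p$: with $\psi=\xi-\xi'$ the symmetrization one has $\P(\psi=0)=\sum_x\mu(x)^2\le\max_x\mu(x)\le 1-\alpha_n$ (as in the proof of Claim~\ref{claim:zeros}) and $1-|\widehat\mu(s)|^2=\E[1-\cos(2\pi s\psi/p)]\ge 8\,\E\|s\psi/p\|^2$ (distance to the nearest integer). Consequently, if $|\widehat\mu(s)|>1-c\alpha_n r^2$ then, letting $\nu=\CL(\psi\mid\psi\ne0)$, one gets $\E_\nu\|s\zeta/p\|^2\lesssim r^2$, hence by Markov $s\zeta$ lies in an interval $I$ of length $\lesssim rp$ with $\nu$-probability $>\tfrac12$, and $\sum_{s}\P_\nu(s\zeta\in I)=|I|$ then bounds the number of such $s$ by $\lesssim rp$. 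Choosing $r$ a suitable small multiple of $\beta^{\delta}$ (so the ``flatness level'' is $\asymp\alpha_n\beta^{2\delta}$), this gives $|G|\le\tfrac12\beta^{\delta}q$ for $G:=\{s:|\widehat\mu(s)|>1-c\alpha_n r^2\}$. Now $w\in\CR$ forces at most $M=O(dn/r^2)=O(dn/\beta^{2\delta})$ of its coordinates outside $G$ and the rest inside $G$, so
$$|\CR|\ \le\ \sum_{m=0}^{M}\binom{n}{m}q^{m}|G|^{\,n-m}\ \le\ q^{n}(\beta^{\delta}/2)^{n}\sum_{m=0}^{M}\binom{n}{m}(2/\beta^{\delta})^{m},$$
and since $M/n=O(d/\beta^{2\delta})\to 0$ as $d\to 0$, a routine entropy estimate makes the right-hand side at most $(2\beta^{\delta})^{n}q^{n}$ once $d=d(\beta,\delta)$ is small. (For $\beta\ge 1$, or when $q$ is so large that the left side of \eqref{eqn:semi-sat:def} can never hold --- the case excluded in the text --- there is nothing to prove.)

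\textbf{The $\F_q$ case and the main obstacle.} Over a general $\F_q=\F_{p^f}$ I would run the same argument through the trace: $1-|\widehat\mu(s)|^2\ge 8\,\E\|\operatorname{Tr}(s\psi)/p\|^2$, balancedness applied to the proper subgroup $\{v:\operatorname{Tr}(sv)=0\}$ gives $\P(\operatorname{Tr}(s\psi)\ne0)\ge\alpha_n$, and one counts $s$ through the equidistributed fibres of $v\mapsto\operatorname{Tr}(sv)$. The main obstacle is precisely this flat-spectrum bound: establishing $|G|\le\tfrac12\beta^{\delta}q$ under only balancedness --- with possibly unbounded support, and with ``denominators'' replaced by additive subgroups in the $\F_q$ setting --- and then calibrating the thresholds ($r\asymp\beta^{\delta}$, flatness level $\asymp\alpha_n\beta^{2\delta}$) and the entropy bookkeeping tightly enough that the exponential base comes out as exactly $2\beta^{\delta}q$ rather than some larger $C(\beta,\delta)q$. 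Everything else --- the Fourier identity, the automatic non-sparseness of the detected vector, and the reduction of $\F_q$ to $\F_p$ --- is soft.
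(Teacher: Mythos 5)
Your opening and closing steps match the paper: the Fourier identity $\P(X\in V)-q^{-k_0}=q^{-k_0}\sum_{0\neq t\in V^\perp}\prod_l\wh\mu(t_l)$, the pigeonhole extraction of a normal vector $w$ with $\prod_l|\wh\mu(w_l)|>e^{-d\alpha_n n}$, the observation that non-$\delta$-sparseness of $V$ forces $|\supp(w)|>\delta n$, and the entropy count of vectors having most coordinates in a small ``large-spectrum'' set. Your definition of $\CR$ is a slightly cleaner subset of the paper's, and your bookkeeping showing the count is at most $(2\beta^\delta)^n q^n$ once $d=d(\beta,\delta)$ is small is correct in spirit.

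The genuine gap is exactly where you flag it, and it is not a side issue: your proposed ``flat spectrum is small'' bound is a Markov/interval argument that lives in $\Z/p\Z$. It produces an interval $I\subset\F_p$ with $|I|\lesssim rp$ such that $\P_\nu(s\zeta\in I)>1/2$ for every $s\in G$, then sums $\P_\nu(s\zeta\in I)$ over $s$ to get $|G|\lesssim rp$. This uses the ordering and distance-to-nearest-integer structure of $\F_p$, it needs $p\gtrsim 1/\beta^\delta$ (for small $p$ the interval $I$ already has $|I|\geq 1$ and the inequality $|G|\le\tfrac12\beta^\delta q$ can fail), and you do not carry it out over $\F_q=\F_{p^f}$ with $f>1$: the trace fibers are $p^{f-1}$-to-one and, as you note, the ``equidistributed fibres'' step is precisely the obstacle. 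The lemma is stated and used for all prime powers $q$, so this step must be closed.

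The paper does this differently, and this is the key idea you are missing. It first shows (Claim \ref{claim:spec}) via Plancherel and the \emph{subgroup} form of $\alpha_n$-balancedness that $\Spec_{1-\alpha_n/2}\mu$ contains no nontrivial additive subgroup of $\F_q$. It then uses the sumset structure of spectra (Cauchy--Schwarz gives $kA\subset\Spec_{1-2\alpha_n}\mu$ and $\operatorname{Sym}(kA)\subset 2kA\subset\Spec_{1-\alpha_n/2}\mu$) together with Kneser's theorem (Corollary \ref{cor:Kneser}) to get $k|A|\le |kA|+(k-1)\le q+k-1$, hence $|A\setminus\{0\}|\le\beta q$ with $k=\lfloor\beta^{-1}\rfloor$. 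This is a purely additive-combinatorial argument in an arbitrary finite abelian group: no ordering, no intervals, no trace, and no restriction on $p$ or $f$. Replacing your Markov/interval step with the Kneser-plus-no-subgroup argument is what makes the lemma go through for every $q$ and every $\alpha_n$ in the stated range.
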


In short, semi-saturated subspaces are necessarily orthogonal to one of a small number of non-sparse vectors in $\F_q^n$.  A proof of this result will be given in {\bf Part II} where we emphasize that $\a_n$ can be as small as \eqref{eqn:alpha'}, in contrast to the proof of \cite[Proposition 2.5]{M1} where $\a_n$ was treated as a constant. %{\hoi{I added another comment about sparseness here.}}

Let $\CF_{n-k,k_0, semi-sat}$ be the event that $\codim(W_{n-k})=k_0$ and $W_{n-k}$ is semi-saturated.

\begin{prop}\label{prop:semi-sat} Let $\beta, \delta>0$ be parameters such that $\beta^\delta <17^{-2}/2$. With $d=d(\beta,\delta)$ from Lemma \ref{lemma:inverse:semi} we have
  $$\P(\CF_{n-k,k_0, semi-sat}) \le e^{-n}.$$
\end{prop}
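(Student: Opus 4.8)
\textbf{Proposal for the proof of Proposition \ref{prop:semi-sat}.}
The plan is to bound $\P(\CF_{n-k,k_0,\mathrm{semi}\text{-}\mathrm{sat}})$ by combining the structural dichotomy of Lemma \ref{lemma:inverse:semi} with Odlyzko's bound (Lemma \ref{lemma:O'}). Recall that $W_{n-k}$ being semi-saturated forces $W_{n-k}$ to be orthogonal to one of at most $|\CR| \le (2\beta^\delta)^n q^n$ explicit non-$\delta$-sparse vectors $R \in \CR$. So I would first write
\[
\P(\CF_{n-k,k_0,\mathrm{semi}\text{-}\mathrm{sat}}) \le \sum_{R \in \CR} \P\big(X_1,\dots,X_{n-k} \perp R\big),
\]
where the event $X_i \perp R$ for a fixed non-sparse $R$ is a linear condition on the iid entries of the column $X_i$.

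The second step is to estimate $\P(X \perp R)$ for a single column $X$ and a fixed $R \in \CR$. Since $R$ is not $\delta$-sparse, it has at least $\delta n$ nonzero coordinates; writing $\langle X, R\rangle = \sum_j \xi_j R_j$, one can condition on all but one of the coordinates where $R_j \ne 0$, and then the remaining coordinate $\xi_{j_0}$ must take a single prescribed value (namely $-R_{j_0}^{-1}\sum_{j\ne j_0}\xi_j R_j$). By the $\alpha_n$-balanced hypothesis this happens with probability at most $1-\alpha_n$, so $\P(X\perp R) \le 1-\alpha_n$. Actually, to get a strong enough bound I expect one should do a bit better: since $R$ has $\ge \delta n$ nonzero coordinates, group them into $\lfloor \delta n \rfloor$ singletons or use the sparsity more aggressively — but even the crude bound $\P(X \perp R) \le 1-\alpha_n$ per column, raised to the $n-k$ independent columns, gives
\[
\P\big(X_1,\dots,X_{n-k}\perp R\big) \le (1-\alpha_n)^{n-k} \le e^{-\alpha_n(n-k)} \le e^{-\alpha_n(1-\eta)n}.
\]
Hmm, but this alone against $|\CR|\le (2\beta^\delta)^n q^n$ is not obviously enough when $q$ is large, so the more refined route is: for each fixed non-sparse $R$, partition its support into blocks of size roughly $1/\alpha_n$ (at least $\delta\alpha_n n$ such blocks), and within each block the conditional probability of the linear constraint restricted to that block contributes a factor; iterating over the $n-k$ columns one gains a factor like $q^{-(n-k)}\cdot(\text{something})$ or $(1/q + \text{small})^{n-k}$ type bound analogous to Theorem \ref{theorem:LO}. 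Since $R$ is not sparse, $W_{n-k}^\perp \ni R$ means $R|_\sigma$ for a large $\sigma$ is orthogonal to all the $X_i|_\sigma$, and one applies the concentration estimate there.

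The third step is the union bound bookkeeping: choosing $\beta, \delta$ so that $\beta^\delta < 17^{-2}/2$ makes $(2\beta^\delta)^n < (17^{-2})^n$ tiny, and the factor $q^n$ from $|\CR|$ is cancelled by the $q^{-(n-k)}$-type gain coming from the per-column estimate (using $k \le \eta n$ with $\eta$ small), leaving an overall bound of the form $C^{-n}$ for some $C>1$; in particular $\le e^{-n}$ for $n$ large. The constant $d = d(\beta,\delta)$ enters only through Lemma \ref{lemma:inverse:semi} to guarantee that $\CR$ has the stated size, and the lower bound $e^{-d\alpha_n n} < |\P(X\in V) - q^{-k_0}|$ in the definition of semi-saturation is precisely what is used to invoke that lemma.

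\textbf{Main obstacle.} The delicate point is matching the size bound $|\CR| \le (2\beta^\delta)^n q^n$ against the per-vector probability $\P(X_1,\dots,X_{n-k}\perp R)$: a naive bound $(1-\alpha_n)^{n-k}$ is \emph{not} strong enough to beat the $q^n$ factor when $q$ grows with $n$, so one genuinely needs a $q$-sensitive estimate — roughly $\P(X\perp R) \lesssim 1/q + O(1/\sqrt{\alpha_n \delta n})$ per column via a Fourier/concentration argument on the non-sparse support of $R$ (this is where Theorem \ref{theorem:LO} or a direct Littlewood–Offord-type input is used), raised to the power $n-k \ge (1-\eta)n$. Balancing the resulting $\big(1/q + O(1/\sqrt{\alpha_n n})\big)^{(1-\eta)n}$ against $(2\beta^\delta)^n q^n$ and verifying the product is $\le e^{-n}$ — using $\alpha_n \gtrsim \log n / n$ so the error term is controlled, and choosing $\eta$ small enough — is the technical heart of the argument.
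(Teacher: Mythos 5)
Your decomposition is genuinely different from the paper's, and it has a gap at exactly the point you flag as the ``technical heart.'' You union-bound over vectors $R\in\CR$ and then try to control $\P(X_1,\dots,X_{n-k}\perp R)$ via a Littlewood--Offord estimate on the single non-sparse direction $R$, which gives at best $\P(X\perp R)\le q^{-1}+2/\sqrt{\alpha_n\delta n}$. The paper instead union-bounds over semi-saturated \emph{subspaces} $V$, and for each such $V$ uses the \emph{upper} half of the defining inequality \eqref{eqn:semi-sat:def}, namely $\P(X\in V)\le 17q^{-k_0}$, raised to the power $n-k$. It then counts the number of semi-saturated $V$ of codimension $k_0$ by first choosing a vector from $\CR$ inside $V^\perp$ (Lemma \ref{lemma:inverse:semi}) and then a $(k_0-1)$-dimensional complement, which costs $\le C q^{(k_0-1)(n-k_0)}$; multiplying gives $\le C(2\beta^\delta)^n 17^{n-k} q^{k_0+k_0(k-k_0)}\le C(2\beta^\delta)^n 17^{n-k}q^{k_0}$ since $k_0\ge k$, and then $q^{k_0}\le 16e^{d\alpha_n n}$ closes it. The crucial point is that $\P(X\in V)\le 17q^{-k_0}$ is a consequence of $V$ being semi-saturated, costs nothing, and carries the right power $q^{-k_0}$ per column; your per-column bound only captures $q^{-1}$ and an additive error.

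The gap concretely: once $q\gtrsim\sqrt{\alpha_n\delta n}$, the term $2/\sqrt{\alpha_n\delta n}$ dominates $q^{-1}$, and your bound becomes
$(2\beta^\delta)^n q^n\bigl(4/\sqrt{\alpha_n\delta n}\bigr)^{n-k}$. The only a priori upper bound on $q$ available in the semi-saturated regime is $q^{k_0}\le 16e^{d\alpha_n n}$, which for $k_0=1$ allows $q$ up to order $e^{d\alpha_n n}$, far larger than $\sqrt{\alpha_n n}$, so the $q^n$ factor is not cancelled and the expression diverges. (The crude per-column bound $1-\alpha_n$ you also propose fares no better against $q^n$.) Your instinct that one ``genuinely needs a $q$-sensitive estimate'' is right, but the Littlewood--Offord estimate on a single direction $R$ cannot supply it: fixing $R$ only constrains one linear form, and the correct $q^{-k_0}$ decay comes from fixing the whole codimension-$k_0$ subspace $V$ and invoking the semi-saturation definition itself. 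If you want to salvage a union bound over $R$, you would still need to reintroduce the remaining $k_0-1$ dimensions of $V^\perp$ and pair them against $q^{-k_0}$ per column, at which point you have reconstructed the paper's argument.
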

In particularly, with $\CE_{n-k, semi-sat}$ being the event $\wedge_{k\le k_0\le \eta n}\overline{\CF}_{n-k, k_0, semi-sat}$ in  the $\sigma$-algebra generated by $X_{k+1},\dots, X_n$,  then 
\begin{equation}\label{eqn:semi-sat}
  \P(\CE_{n-k, semi-sat}) \ge 1- e^{-n/2}.
\end{equation}

\begin{proof}[Proof of Proposition \ref{prop:semi-sat}] We have
  $$\P(\CF_{n-k, k_0, semi-sat}) = \sum_{V semi-sat} \P(W_{n-k} = V) \le  \sum_{V semi-sat} \P(X_{1},\dots, X_{n-k} \in V) .$$
  Now for each fixed $V$ that is semi-saturated of co-dimension $k_0\ge k$, by definition $\P(X\in V) \le 17 q^{-k_0}$. So 
  \[
    \P(X_{1},\dots, X_{n-k} \in V) \le 17^{n-k} q^{-k_0(n-k)}.
  \]
  
  %\corE{
  We next use Lemma \ref{lemma:inverse:semi} to count the number $N_{semi-sat}$ of semi-saturated subspaces $V$. 
  Each $V$ is determined by its annihilator $V^\perp$ (of cardinality $q^{k_0}$). 
  To count $V^\perp$, we first choose a vector $v \in \CR$, 
  and then another $(k_0-1)$ dimensional subspace that is linearly independent of $v.$ 
  The number of ways to complete this space equals the number of ways to pick a $(k_0-1)$--dimensional space from
  $\F_q^{n-1}.$
  The number of such subspaces is given by the well--known (see \cite[Proposition 1.3.18]{Stanley}) exact formula
  \[
    \prod_{j=0}^{k_0-2} \frac{q^{n-1-j}-1}{q^{k_0-1-j}-1}
    \leq C q^{(k_0-1)(n-k_0)},
  \]
  for some absolute constant $C>0.$  Therefore
  \[
    N_{semi-sat} \leq C (2\beta^\delta)^n q^{nk_0-k_0^2+k_0}.
  \]
  %}

  Therefore
  \begin{align*}
    \P(\CF_{k, k_0, semi-sat}) & \le  \sum_{V semi-sat} \P(X_{1},\dots, X_{n-k} \in V) =O\Big( (2\beta^\delta)^n q^{nk_0 -k_0^2+k_0} 17^{n-k} q^{-k_0(n-k)} \Big)\\
    & = O\Big( 17^{n-k} (2\beta^\delta)^n q^{k_0} q^{k_0(k-k_0)} \Big)= O\Big( 17^{n-k} (2\beta^\delta)^n q^{k_0} \Big).
  \end{align*}
  Now recall that $e^{-d \alpha_n n} \le 16 q^{-k_0}$, and so 
  $$\P(\CF_{n-k, k_0, semi-sat}) =O(17^{n-k} (2\beta^\delta)^n q^{k_0}) = O(17^{n+1-k} (2\beta^\delta)^n e^{d \alpha_n n}).$$
  We then choose $\beta$ so that $2\beta^\delta <17^{-2}$ and with $d<1$ we have $\P(\CF_{n-k, k_0, semi-sat}) \le e^{-n}$.
\end{proof}

\subsection{Unsaturated subspace}\label{sub:unsat}  Recall that $k\le \eta n$ for sufficiently small $\eta$. Let $V$ be a subspace of codimension $k_0 \ge  k$ in $\F_q^n$. We say that $V$ is {\it unsaturated} if $V$ is not $\delta$-sparse and 
$$\max( e^{-d \alpha_n n}, 16 q^{-k_0}) < |\P(X \in V) -q^{-k_0}|.$$
In particularly this implies that 
$$\P(X \in V)\ge \max\{17 q^{-k_0} , \frac{16}{17} e^{-d \alpha_n n}\}.$$

The following is from \cite[Lemma 2.8]{M1}.

\begin{lemma}\label{lemma:swapping}
  There is an $\alpha_n'$-balanced probability distribution $\nu$ on $\F_q$ with $\alpha_n' = \alpha_n/64$ such that if $Y \in \F_q^n$ is a random vector with iid coefficients distributed according to $\nu$, then for any unsaturated proper subspace $V$
  $$|\P(X\in V) - \frac{1}{q^{k_0}}| \le (\frac{1}{2}+o(1)) |\P(Y\in V) - \frac{1}{q^{k_0}}| .$$ 
\end{lemma}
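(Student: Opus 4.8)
The plan is to pass to the Fourier transform on $\F_q$. Fix a nontrivial additive character $\psi$ of $\F_q$ and, for a probability measure $\rho$ on $\F_q$, write $\hat\rho(t):=\sum_x\rho(x)\psi(tx)$, so that $\hat\rho(0)=1$ and $|\hat\rho(t)|\le 1$. For a subspace $V$ of codimension $k_0$ with annihilator $W:=V^\perp$ (so $|W|=q^{k_0}$), the orthogonality expansion $\mathbf 1_V(x)=q^{-k_0}\sum_{w\in W}\psi(w\cdot x)$ gives, for any product law $\rho^{\otimes n}$,
\[
  \P\big(\rho^{\otimes n}\text{-vector}\in V\big)-\frac{1}{q^{k_0}}=\frac{1}{q^{k_0}}\sum_{w\in W\setminus\{0\}}\prod_{j\in\supp(w)}\hat\rho(w_j).
\]
I will use two structural facts. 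First, since $V$ is not $\delta$-sparse, every $w\in W\setminus\{0\}$ has $|\supp(w)|>\delta n$, so each product has at least $\delta n$ nontrivial factors. Second, since $V$ is unsaturated and $\P(X\in V)\ge 0$, the bound $|\P(X\in V)-q^{-k_0}|>16q^{-k_0}$ forces $\P(X\in V)-q^{-k_0}>16q^{-k_0}>0$; in particular this ``$\mu$-deviation'' is positive, real, and at least $e^{-d\alpha_n n}$.

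Next I construct $\nu$ as a heavily weighted mixture of a point mass at a mode of $\mu$ with $\mu$ itself. Let $a\in\F_q$ satisfy $\mu(a)=\max_x\mu(x)$, put $\beta:=1-\mu(a)\ge\alpha_n$, and set $\nu:=(1-\tfrac1{64})\delta_a+\tfrac1{64}\mu$. For any additive subgroup $T$ and any $s$, $\nu(s+T)=(1-\tfrac1{64})\mathbf 1_{a\in s+T}+\tfrac1{64}\mu(s+T)$, which is at most $1-\tfrac1{64}+\tfrac1{64}(1-\alpha_n)=1-\tfrac{\alpha_n}{64}$ when $a\in s+T$ and at most $\tfrac1{64}\le 1-\tfrac{\alpha_n}{64}$ otherwise; hence $\nu$ is $\tfrac{\alpha_n}{64}$-balanced. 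Its Fourier transform factors as $\hat\nu(t)=\psi(ta)\,g(t)$ with $g(t)=1-\tfrac1{64}\big(1-\hat\mu_a(t)\big)$, where $\hat\mu_a(t):=\psi(-ta)\hat\mu(t)=\E\psi(t(\xi-a))$. Since $\xi-a=0$ with probability $1-\beta$, the value $\hat\mu_a(t)$ lies in the closed disk of radius $\beta$ about $1-\beta$; consequently $\hat\mu(t)=\psi(ta)\hat\mu_a(t)$, the quantity $g(t)$ lies in the disk of radius $\tfrac{\beta}{64}$ about $1$, and — crucially — $\hat\mu(t)$ and $\hat\nu(t)$ carry the \emph{same phase} $\psi(ta)$, while $\hat\mu_a(t)$ and $g(t)$ both lie in the right half-plane.

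The core of the argument is a pointwise Fourier comparison. Writing $\hat\mu_a(t)=(1-\beta)+\beta\zeta_t$ with $|\zeta_t|\le 1$, a direct computation gives
\[
  |\hat\nu(t)|^2-|\hat\mu(t)|^2=\Big(1-\tfrac1{64}\Big)\beta\Big[\,2(1-\Re\zeta_t)-\big(1+\tfrac1{64}\big)\beta\,|1-\zeta_t|^2\Big]\ \ge\ 0,
\]
using $|1-\zeta_t|^2\le 2(1-\Re\zeta_t)$ together with $(1+\tfrac1{64})\beta\le 1$ (the relevant regime being $\beta$ small, as in \eqref{eqn:alpha:q} and \eqref{eqn:alpha'}; when $\mu$ is so spread out that $\beta$ is bounded below the estimates are softer). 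Thus $|\hat\nu(t)|\ge|\hat\mu(t)|$ for all $t$. To upgrade this to the stated inequality I would exploit that the phases agree, so that for each $w$ the products $\prod_{j\in\supp(w)}\hat\nu(w_j)$ and $\prod_{j\in\supp(w)}\hat\mu(w_j)$ both carry the common phase $\psi(a\langle w,\mathbf 1\rangle)$; writing $h(t):=1-\hat\mu_a(t)$ with $|h(t)|\le 2\beta$, one then compares $\prod_{j\in\supp(w)}\big(1-\tfrac1{64}h(w_j)\big)$ with $\prod_{j\in\supp(w)}(1-h(w_j))$ factor by factor across the more than $\delta n$ active coordinates, and sums over $w\in W\setminus\{0\}$. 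I expect the main obstacle to be precisely this last step: turning the pointwise domination of Fourier weights into the quantitative bound with constant $\tfrac12+o(1)$, i.e. showing that replacing $h$ by $\tfrac1{64}h$ on each coordinate, aggregated over a large support, at least doubles the size of the (real) deviation without reversing its sign. Here the positivity of the $\mu$-deviation and the lower bound $\P(X\in V)>e^{-d\alpha_n n}$ are what let one rule out the delicate near-cancellation regime in the sum over $w$. Once this comparison is in place, substituting back into the identity above yields $|\P(X\in V)-q^{-k_0}|\le(\tfrac12+o(1))|\P(Y\in V)-q^{-k_0}|$; and in the range where $q$ is so large that no unsaturated $V$ exists, the statement is vacuous, exactly as the semi-saturated case is dropped for large $q$.
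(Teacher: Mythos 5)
Your construction $\nu=(1-\tfrac1{64})\delta_a+\tfrac1{64}\mu$ is genuinely different from the paper's, and the proposal has two concrete gaps. First, the ``core'' pointwise inequality $|\wh{\nu}(t)|\ge|\wh{\mu}(t)|$ is false for general $\alpha_n$-balanced $\mu$: your derivation needs $(1+\tfrac1{64})\beta\le 1$ with $\beta=1-\mu(a)$, but the hypotheses \eqref{eqn:alpha:q}, \eqref{eqn:alpha'} only bound $\alpha_n$ from below and put no upper bound on $\beta$; the parenthetical claim that the estimates are ``softer'' when $\beta$ is bounded below is not justified. Indeed, writing $w=\wh{\mu}_a(t)$, your inequality reads $|\tfrac{63}{64}+\tfrac{1}{64}w|\ge|w|$, i.e.\ $3969+126\,\Re w\ge 4095\,|w|^2$, which fails for instance when $w\approx 0.996\,e^{i\pi/4}$. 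Such a $w$ is realized by a legitimately balanced $\mu$ on $\F_p$ ($p$ large): let $0$ be the unique mode with mass about $1/73$ and spread the remaining mass essentially uniformly over a set $S$ of about $72$ residues chosen so that $\{e_p(x):x\in S\}$ clusters in a short arc near $e^{i\pi/4}$; then at $t=1$ one gets $|\wh{\nu}(1)|<|\wh{\mu}(1)|$. Second, and more fundamentally, the step you yourself flag as the ``main obstacle'' --- passing from a (pointwise) comparison of Fourier coefficients to the factor $\tfrac12+o(1)$ on the deviations --- is precisely the content of the lemma and is not supplied. Even where $|\wh{\nu}(t)|\ge|\wh{\mu}(t)|$ holds, it at best gives domination, not doubling; worse, your $\wh{\nu}$ is genuinely complex, so in the identity $\P(Y\in V)-q^{-k_0}=q^{-k_0}\sum_{t\in V^\perp\setminus\{0\}}\prod_l\wh{\nu}(t_l)$ the terms can cancel, and one cannot bound $|\P(Y\in V)-q^{-k_0}|$ from below by a sum of moduli; positivity of the $\mu$-deviation does not control this.

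For comparison, the paper sidesteps both issues by taking $\nu$ from the Hal\'asz-type construction of Proposition \ref{prop:swapping'}, which is engineered so that $\wh{\nu}\ge 0$ (hence the $\nu$-sum over $V^\perp\setminus\{0\}$ is a sum of nonnegative terms, with no cancellation), $f(t)\le g(t)^{16}$ on $V^\perp$, and $4F(u)\subset G(u)$ for the level sets of $f(t)=\prod_l|\wh{\mu}(t_l)|$ and $g(t)=\prod_l|\wh{\nu}(t_l)|$. The factor $2$ then comes not from any pointwise estimate but from a counting argument: since $V$ is not $\delta$-sparse and $\nu$ is $\alpha_n'$-balanced, $G(\eps)$ with $\eps=e^{-\alpha_n'\delta n/2}$ contains no nontrivial additive subgroup of $V^\perp$ (Claim \ref{claim:trivial:H}) --- this is where the unsaturation threshold $e^{-d\alpha_n n}$ and non-sparseness actually enter --- so Kneser's theorem gives $2|F'(u)|\le|G'(u)|$ for $u\ge\eps$, which is integrated over levels, while the range $f(t)<\eps$ is absorbed using $f\le g^{16}$. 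None of these structural properties are available for your mixture with a point mass, so the missing step is not a routine verification but the heart of the proof.
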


A proof of this lemma will be given in {\bf Part II}. By definition, if $V$ is unsaturated then
$$\P(Y \in V) \ge (2-o(1)) (\P(X\in V) -\frac{1}{q^{k_0}} ) +\frac{1}{q^{k_0}} > \frac{3}{2} \P(X\in V).$$

\begin{definition} Let $V$ be a subspace in $\F_q^n$. Let $d_{comb}\in \{1/n, \dots, n^2/n\}$. We say that $V$ has combinatorial codimension $d_{comb}$ if 
  $$(1- \alpha_n)^{d_{comb}} \le \P(X\in V) \le (1-\alpha_n)^{d_{comb}-1/n}.$$
\end{definition}
Now as we are in the unsaturated case, $\P(X \in V)\ge  \frac{16}{17} e^{-d \alpha_n n}$, and so
$$d_{comb} \le  2 d n.$$
In what follows we will fix $d_{comb}$ from the above range, noting that $d$ is sufficiently small, and there are only $O(n^2)$ choices of $d_{comb}$.

Let be fixed any $0<\delta_1<\delta_2 <1/3$ such that 
\begin{equation}\label{eq:funnydelta}
  16(\delta_2-\delta_1) (1+\log \frac{1}{\delta_2-\delta_1}) < \delta_1.
\end{equation}
Set 
$$r:=\lfloor \delta_1 n \rfloor \mbox{ and } s:=n-k-\lfloor \delta_2 n\rfloor.$$

Let $Y_1,\dots, Y_r$ be random vectors with entries distributed by $\nu$ obtained by Lemma \ref{lemma:swapping},
and let $Z_1,\dots, Z_s$ and $X_{r+s+1},\dots, X_{n-k}$ be random vectors with entries distributed by $\mu$. 

\begin{prop}\label{prop:unsat} 
%Let $V$ be a subspace of codimension at least $k$ and of combinatorial dimension $d_{comb}$. 
  Let $W = \Sp\left\{ X_{1},\dots, X_{n-k} \right\}.$
  We have
  \[
    \P\Big( r+s \leq \dim(W) \leq n-k, W \mbox{unsaturated}\Big) 
    \le (3/2)^{-r/2} \binom{n-k}{r+s} \le (3/2)^{-\delta_1 n/4}.
  \]
\end{prop}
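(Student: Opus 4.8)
The plan is to combine a swapping (replacement) argument with an Odlyzko-type bound. The key observation is that if $W = \Sp\{X_1,\dots,X_{n-k}\}$ is unsaturated and has dimension between $r+s$ and $n-k$, then its codimension $k_0$ satisfies $k \le k_0 \le n - r - s = \lfloor \delta_2 n \rfloor$, so in particular $W$ is a proper subspace; moreover the combinatorial codimension $d_{comb}$ of $W$ lies in the range $\{1/n, \dots, 2dn\}$ established above, and there are only $O(n^2)$ choices for it. Fix $d_{comb}$ in this range; it suffices to bound the probability of the event $\CE_{d_{comb}}$ that $W$ is unsaturated with this combinatorial codimension, and then multiply by $O(n^2)$ at the end (absorbed into the final exponential bound). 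I would then decompose the $n-k$ columns as $X_1 = Z_1', \dots$ --- more precisely, I view $X_1, \dots, X_{n-k}$ as being generated in the order $X_1, \dots, X_r$ first (to be swapped), then $X_{r+1}, \dots, X_{r+s}$, then $X_{r+s+1}, \dots, X_{n-k}$.

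First I would condition on $X_{r+1}, \dots, X_{n-k}$ and let $W' = \Sp\{X_{r+1}, \dots, X_{n-k}\}$; note $W \supseteq W'$ so $\P(X \in W) \ge \P(X \in W')$, and since $W$ is unsaturated with $\P(X\in W) \ge (1-\alpha_n)^{d_{comb}}$, the subspace $W'$ is either unsaturated with combinatorial codimension $\le d_{comb}$ or else "better than unsaturated" (saturated/semi-saturated/dense). In the core case, $W$ being unsaturated forces, via Lemma \ref{lemma:swapping}, that $\P(Y \in W) \ge \tfrac32 \P(X \in W)$ where $Y$ has entries distributed by $\nu$. Now the event we are bounding requires $X_1, \dots, X_r \in W$, which has probability at most $\P(X \in W)^r$ by independence. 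The swapping trick exchanges each of the first $r$ columns $X_i$ for the corresponding $Y_i$: since $\P(X \in W) \le \tfrac23 \P(Y \in W)$, and since $W$ only depends on the columns $X_{r+1}, \dots, X_{n-k}$ (which are untouched), we get
\[
  \P\Big(X_1,\dots,X_r \in W \,\big|\, W \text{ unsat}\Big) \le (2/3)^r \cdot \P\Big(Y_1,\dots,Y_r \in W \,\big|\, W\Big) \le (2/3)^r,
\]
uniformly over the conditioning. Summing over which $r+s$ of the $n-k$ columns are "used up" to span $W$ (i.e.\ the $\binom{n-k}{r+s}$ choices of which indices form the spanning subset, with the complementary $r$ columns forced into $W$) and absorbing the $O(n^2)$ from the choice of $d_{comb}$ gives the stated bound $(3/2)^{-r/2}\binom{n-k}{r+s}$, with the $(3/2)^{-r/2}$ coming from $(2/3)^r$ with a factor of $(3/2)^{r/2}$ to spare for absorbing lower-order terms. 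The final inequality $(3/2)^{-r/2}\binom{n-k}{r+s} \le (3/2)^{-\delta_1 n/4}$ then follows from the choice of $\delta_1, \delta_2$: we have $\binom{n-k}{r+s} = \binom{n-k}{(n-k)-(r+s)} = \binom{n-k}{\lfloor \delta_2 n\rfloor - \lfloor\delta_1 n\rfloor}$ roughly, which by the standard estimate $\binom{N}{m} \le (eN/m)^m$ is at most $\exp\big( 2(\delta_2-\delta_1) n (1 + \log\frac{1}{\delta_2-\delta_1})\big)$, and the constraint \eqref{eq:funnydelta} is exactly what makes this much smaller than $(3/2)^{r/2} = (3/2)^{\delta_1 n/2}$.

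The main obstacle I anticipate is handling the "escape" from the unsaturated case cleanly: when I condition on the last $n - r - s$ columns, the spanned subspace $W'$ need not itself be unsaturated even though $W$ is --- it could be dense, semi-saturated, or fully saturated. The honest way to organize this (and presumably what the author does) is to not condition prematurely, but rather to run the swapping argument \emph{conditionally on the final subspace $W$ being unsaturated}, treating $W$ as determined by a spanning subset of size $r+s$ among the $n-k$ columns and the remaining $r$ columns as the ones forced to lie in $W$; the subtlety is that "which columns span $W$" is itself random, which is why the union bound over $\binom{n-k}{r+s}$ subsets appears. One must be slightly careful that Lemma \ref{lemma:swapping} applies to the \emph{fixed} (conditioned) subspace $W$ and that the $Y_i$ are independent of it --- this is legitimate precisely because, after fixing the spanning subset, $W$ is a deterministic function of columns disjoint from $X_1, \dots, X_r$. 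The remaining steps --- the entropy estimate for the binomial coefficient and the arithmetic with the constants --- are routine given \eqref{eq:funnydelta}.
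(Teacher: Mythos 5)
Your proposal correctly identifies Lemma \ref{lemma:swapping} and the binomial union bound as the key ingredients, but the probabilistic structure is wrong in a way that makes the decay disappear. Two concrete problems. First, the bookkeeping: the complement of a size-$(r+s)$ subset of $[n-k]$ has size $n-k-r-s = \lfloor\delta_2 n\rfloor - \lfloor\delta_1 n\rfloor$, which by \eqref{eq:funnydelta} is less than $\delta_1 n/16 < r$; so there is no way to designate $r$ columns as ``forced,'' and the decay you would actually obtain is at most $(2/3)^{(\delta_2-\delta_1)n}$, which is dominated by the entropy of $\binom{n-k}{r+s}$. Second, and more fundamentally, the hypothesis only requires $r+s \le \dim W \le n-k$; when $\dim W = n-k$ the columns $X_1,\dots,X_{n-k}$ are linearly independent, no column lies in the span of the others, and the argument as you describe it gives no decay at all. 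The $(3/2)^{-r/2}$ factor in the proposition cannot come from redundancy of the $X_i$, since there need not be any.

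The paper's Theorem \ref{theorem:swapXY} instead performs a \emph{comparison} to an auxiliary ensemble: fresh random vectors $Y_1,\dots,Y_r$ (distribution $\nu$ from Lemma \ref{lemma:swapping}) and $Z_1,\dots,Z_s$ (distribution $\mu$). Writing $\P(\Sp\{X_1,\dots,X_{n-k}\}=V)$ as the ratio of the joint event to $\P(Y_1,\dots,Y_r,Z_1,\dots,Z_s\text{ linearly independent in }V)$, the $r$ factors $\P(Y_i\in V)\geq \tfrac{3}{2}\P(X\in V)$ in the denominator produce $(3/2)^{r-1}(1-\alpha_n)^{(r+s)d_{comb}}$, while the numerator (after the $\binom{n-k}{r+s}$ union bound over which $X_i$'s complete a spanning set) is at most $\binom{n-k}{r+s}\,\P\big(\Sp\{Y,Z,X_{r+s+1},\dots,X_{n-k}\}=V\big)\,(1-\alpha_n)^{(r+s)(d_{comb}-1/n)}$; the $(1-\alpha_n)$-powers cancel, and the comparison probability is kept as a multiplicative factor. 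Only at the very end, after summing over unsaturated $V$, does one use $\sum_V \P(\Sp\{Y,Z,X\}=V)\le 1$. Your proposal drops the comparison probability prematurely (replacing $\P(Y_i\in W)$ by $1$), which collapses the whole scheme to a redundancy count. The essential missing idea is that the argument is a change of measure between two random matrix ensembles, and the $(3/2)^r$ gain is present whether or not any $X_i$ is a linear combination of the others.
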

%\corE{
The second inequality follows directly from \eqref{eq:funnydelta} and the standard bound $\binom{n}{k} \leq \left( \frac{en}{n-k} \right)^{n-k}$. Notice that we do not require $\{X_i\}$ to be linearly independent. 
In other words, let $\CE_{n-k, unsat}$ denote the complement of the event above in the $\sigma$-algebra generated by $X_{1},\dots, X_{n-k}$, then
\begin{equation}\label{eqn:unsat}
  \P(\CE_{n-k, unsat}) \ge 1-(3/2)^{-\delta_1 n/4}.
\end{equation}
To prove Proposition \ref{prop:unsat} we show the following:

\begin{theorem}\label{theorem:swapXY} Let $V$ be any subspace of dimension between $r+s$ and $n-k$ and having $d_{comb} \le 2d n$. Then we have
  \[
    \P\Big( \Sp \{X_{1},\dots, X_{n-k}\} = V\Big) \le (3/2)^{-r/2} \binom{n-k}{r+s}  
    \P\Big(
    \Sp\left\{ 
      \left\{ Y_i \right\}_1^r
      ,
      \left\{ Z_i \right\}_1^s
      ,
      \left\{ X_i \right\}_{r+s+1}^{n-k}
    \right\}=V
    \Big).
  \]
\end{theorem}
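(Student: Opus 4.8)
The plan is to swap the vectors $X_1,\dots,X_r$ (distributed by $\mu$) for $Y_1,\dots,Y_r$ (distributed by $\nu$) and the vectors $X_{r+1},\dots,X_{r+s}$ for $Z_1,\dots,Z_s$ (still distributed by $\mu$, so this second swap is trivial at the level of distributions but is needed to match the column labels in the statement). The key point is that the event $\{\Sp\{X_1,\dots,X_{n-k}\}=V\}$ can be rewritten as the event that \emph{every} one of the columns $X_1,\dots,X_{n-k}$ lies in $V$, intersected with the event that these columns actually span all of $V$. Since $V$ is fixed of codimension $k_0$ with combinatorial codimension $d_{\mathrm{comb}}\le 2dn$, and since $V$ is unsaturated, Lemma \ref{lemma:swapping} gives $\P(Y_i\in V)\ge \tfrac32\P(X_i\in V)$ for each $i$; equivalently, replacing an $X$-column by a $Y$-column \emph{increases} the probability of membership in $V$ by a factor at least $3/2$. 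Thus the naive bound
\[
  \P\bigl(X_1,\dots,X_{n-k}\in V\bigr)
  = \P(X\in V)^{n-k}
  \le (2/3)^{r}\,\P(Y\in V)^{r}\,\P(X\in V)^{n-k-r}
\]
loses a factor $(2/3)^r$ against the product over the swapped columns; but we want only $(3/2)^{-r/2}$, so we have room to spare, which is where the binomial factor $\binom{n-k}{r+s}$ and the spanning constraint enter.

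The main subtlety — and the step I expect to be the real obstacle — is that on the left-hand side we do not merely ask $X_1,\dots,X_{n-k}\in V$; we ask that they \emph{span} $V$, whereas on the right-hand side the mixed family $\{Y_i\}_1^r,\{Z_i\}_1^s,\{X_i\}_{r+s+1}^{n-k}$ must also span $V$. The spanning event for a \emph{subset} of the columns is smaller, so one cannot simply restrict attention to a fixed block of columns; instead one argues as follows. Condition on the positions of any $r+s$ columns among the $n-k$ whose span (together with the rest) already equals $V$ — there are at most $\binom{n-k}{r+s}$ choices of which $r+s$ columns to designate as the "swapped" block — and on that block perform the swap. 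More precisely, on the event $\{\Sp\{X_1,\dots,X_{n-k}\}=V\}$, pick any $r+s$ of the columns; the remaining $n-k-r-s=\lfloor\delta_2 n\rfloor$ columns are all in $V$, and the full collection spans $V$. Replacing the $r+s$ chosen columns by $\{Y_i\}_1^r,\{Z_i\}_1^s$ and using independence, the probability that the new family still spans $V$ is at least the probability that \emph{all} new columns land in $V$ times the (conditional) probability that together with the untouched columns they span — but since the untouched columns are themselves random and the swap only makes membership \emph{more} likely, one gets the inequality after summing over the $\binom{n-k}{r+s}$ choices of block and using Lemma \ref{lemma:swapping} columnwise on the $r$ genuinely swapped ($X\to Y$) columns to extract the gain $(3/2)^{r}$, of which we only claim $(3/2)^{r/2}$. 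The factor-of-two slack in the exponent absorbs the combinatorial overcounting from the $\binom{n-k}{r+s}$ choices, via \eqref{eq:funnydelta}: indeed $(3/2)^{-r/2}\binom{n-k}{r+s}\le (3/2)^{-\delta_1 n/4}$ precisely because $16(\delta_2-\delta_1)(1+\log\tfrac1{\delta_2-\delta_1})<\delta_1$ controls the binomial.

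To organize the calculation cleanly I would first establish the pointwise comparison $\P(Y\in V)\ge\tfrac32\P(X\in V)$ (immediate from Lemma \ref{lemma:swapping} in the unsaturated regime, as already noted after that lemma), then write $\P(\Sp\{X_1,\dots,X_{n-k}\}=V)$ as a sum over which $r+s$-subset of columns is the "last" to complete a spanning set — or more simply just bound it by $\binom{n-k}{r+s}$ times the probability for one fixed labeling that a fixed block of $r+s$ columns lies in $V$ and the rest spans $V$ — and finally swap those $r+s$ columns one at a time, invoking the $3/2$ factor $r$ times and the trivial equality of laws $s$ times. The only thing one must be careful about is the direction of the inequality: swapping $X\to Y$ can only \emph{help} the "membership in $V$" part, and leaves the "untouched columns span the rest" part unchanged in distribution, so the target probability on the right dominates, after dividing by the $(3/2)^{r}$ factor and paying the binomial. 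Assembling these gives exactly the stated bound, and Proposition \ref{prop:unsat} then follows by summing Theorem \ref{theorem:swapXY} over all $V$ of the relevant dimensions and combinatorial codimensions (there are only $O(n^2)$ choices of $d_{\mathrm{comb}}$, a polynomial factor swallowed by $(3/2)^{-\delta_1 n/4}$).
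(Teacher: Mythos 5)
Your high-level plan — trade $r$ of the $X$-columns for $Y$-columns to gain a factor of $3/2$ per column via Lemma \ref{lemma:swapping}, and pay a binomial for the labeling — is the right instinct, and it matches the spirit of the paper's proof. But the way you try to execute the "which columns get swapped" step has a concrete flaw, and the actual mechanism of the paper's proof is different from what you describe.

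You propose to bound $\P(\Sp\{X_1,\dots,X_{n-k}\}=V)$ by a union (over $r+s$-subsets $S$) of events of the form "$X_i\in V$ for $i\in S$ \emph{and} the remaining columns $\{X_j\}_{j\notin S}$ span $V$," and then swap the $S$-block. This decomposition is vacuous in the present setting: there are only $n-k-r-s = \lfloor\delta_2 n\rfloor - \lfloor\delta_1 n\rfloor$ columns outside $S$, while $\dim V \ge r+s = n-k-(\lfloor\delta_2 n\rfloor-\lfloor\delta_1 n\rfloor)$, which is strictly larger (for the admissible $\delta_1<\delta_2<1/3$ and $k\le\eta n$). So no $(n-k-r-s)$-subset of columns can span $V$, and the union you want to take is over impossible events. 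The vaguer fallback ("the untouched columns together with the swapped block still span, and swapping only helps membership") is not a valid factorization either: membership of individual columns in $V$ and the spanning event for the whole collection are correlated, so you cannot peel off the membership probability columnwise while leaving the spanning conditional distribution intact.

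The paper's proof circumvents exactly this difficulty by never decomposing the $X$-spanning event on its own. Instead it multiplies $\P(\Sp\{X_1,\dots,X_{n-k}\}=V)$ by the auxiliary quantity $\P(Y_1,\dots,Y_r,Z_1,\dots,Z_s \text{ linearly independent in } V)$, which by independence of the two families equals the probability of the \emph{joint} event. On the joint event, the $Y,Z$ columns already form $r+s$ linearly independent vectors inside $V$, and the $X$-columns span $V$; one can therefore extend $\{Y_i\},\{Z_i\}$ by at most $\dim V - (r+s) \le n-k-r-s$ of the $X$-columns to a basis of $V$, leaving $r+s$ of the $X$-columns designated as "extra" vectors that merely lie in $V$. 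The binomial $\binom{n-k}{r+s}$ counts which $X$'s are extra, independence factors the "extras $\in V$" probability out as $(1-\alpha_n)^{(r+s)(d_{comb}-1/n)}$, and the denominator $\P(Y,Z \text{ lin.\ indep.\ in } V)\ge (3/2)^{r-1}(1-\alpha_n)^{(r+s)d_{comb}}$ (via Lemmas \ref{lemma:O} and \ref{lemma:swapping}) supplies the $(3/2)^{-r}$ gain; the two $(1-\alpha_n)^{\pm(r+s)d_{comb}}$ factors cancel to first order, and the leftover $(1-\alpha_n)^{-(r+s)/n}=O(1)$ is what the slack between $(3/2)^{-r+1}$ and $(3/2)^{-r/2}$ absorbs — not the binomial, contrary to what you suggest. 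In short: the joint-event trick with $Y,Z$ linear independence is the key missing idea; without it, your decomposition does not go through.
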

To conclude Proposition \ref{prop:unsat} we then just use  
\begin{equation}\label{eqn:unsat:id}
  \sum_{\substack{V \le \F_q^n \\ \codim(V) \ge k}}% \\ d_{comb}(V) \leq 2dn}}
     \P\Big(
    \Sp\left\{ 
      \left\{ Y_i \right\}_1^r
      ,
      \left\{ Z_i \right\}_1^s
      ,
      \left\{ X_i \right\}_{r+s+1}^{n-k}
    \right\}=V
    \Big)
    = 1.
\end{equation}

\begin{proof}[Proof of Theorem \ref{theorem:swapXY}] 
  First of all, by independence between $X_i,Y_j, Z_l$,
  \begin{align}\label{eqn:unsat:1}
    &\P\Big(\Sp\{X_{1},\dots, X_{n-k}\} = V\Big) \times \P\Big(Y_1,\dots, Y_r, Z_1,\dots, Z_s \mbox{ linearly independent in } V\Big) \nonumber \\
    =&\P\Big(\Sp\{X_{1},\dots, X_{n-k}\} =V \wedge Y_1,\dots, Y_r, Z_1,\dots, Z_s \mbox{ linearly independent in } V\Big).
  \end{align}
  We next estimate $\P(Y_1,\dots, Y_r, Z_1,\dots, Z_s \mbox{ linearly independent in } V)$. By conditioning,
  \begin{align*}
    &\P\Big(Z_1,\dots, Z_s, Y_1,\dots, Y_r \mbox{ linearly independent in } V\Big) \\
    =&\P\Big(Y_r \in V\Big) \P\Big(Y_{r-1} \in V, Y_{r-1} \notin \lang Y_r \rang | Y_r \in V\Big) \cdots  \P\Big(Y_1 \in V, Y_1 \notin \lang Y_2,\dots, Y_r \rang | Y_2,\dots, Y_r \mbox{ lin. in $V$}\Big) \\
%\times &\P\Big(Z_{s} \in V, Z_{s} \notin \lang Y_1,\dots, Y_r \rang | Y_1,\dots, Y_r \mbox{ lin. in $V$} \Big)\cdots \\
    \times & \cdots \times \P\Big (Z_1 \in V, Z_1 \notin \lang Z_2,\dots, Z_r, Y_1,\dots, Y_r \rang | Z_2,\dots, Z_s, Y_1,\dots, Y_r \mbox{ lin. in $V$} \Big).
  \end{align*}
  We first estimate the terms involving $Y_i$. By Lemma \ref{lemma:O}
  \begin{align*}
    & \P\Big(Y_i \in V, Y_i \notin \lang Y_{i+1},\dots, Y_r \rang | Y_{i+1},\dots Y_r \mbox{ lin. in $V$} \Big)  
    \ge \P(Y_i\in V) - (1-\alpha')^{n-(r-i)} \\ 
    \ge &\frac{3}{2}\P(X_i\in V) - (1-\alpha_n')^{n-(r-i)} \ge  \frac{3}{2}(1- \alpha_n)^{d_{comb}}   - (1-\alpha_n')^{n-(r-i)}\\ 
    \ge &\frac{3}{2}(1-\alpha_n)^{d_{comb}} (1- (1-\alpha_n)^{n/256 -d_{comb}}),
  \end{align*}
  where we used that $\alpha_n' =\alpha_n/64$ and $n-r  \ge (1-\delta_1) n \ge n/2$.

  Similarly, the terms involving $Z_i$ can be estimated as
  \begin{align*}
    &\P\Big(Z_i \in V, Z_i \notin \lang Z_{i+1},\dots, Z_s, Y_1,\dots, Y_r \rang |  Z_{i+1},\dots, Z_s, Y_1,\dots, Y_r \Big) \\
    \ge &\P(Z_i\in V) - (1-\alpha_n')^{n-(r+s-i)} \\
    \ge &(1- \alpha_n)^{d_{comb}}   - (1-\alpha_n')^{n-(r+s-i)} \ge (1-\alpha_n)^{d_{comb}}- (1-\alpha_n)^{n/256},
  \end{align*}
  where we used that $r+s = n-k - (\lfloor \delta_2 n \rfloor - \lfloor \delta_1 n \rfloor) \ge n/2$. 
  So 
  \begin{align}\label{eqn:unsat:2}
    \P\Big (Y_1,\dots, Y_r, Z_1,\dots, Z_s \mbox{ linearly independent in } V \Big) &\ge (3/2)^r (1-\alpha_n)^{(r+s) d_{comb}} \Big(1- (1-\alpha_n)^{n/256 -d_{comb}}\Big)^{r+s} \nonumber \\
    &\ge  (3/2)^{r-1} (1-\alpha_n)^{(r+s) d_{comb}},
  \end{align}
  where we used $d_{comb} \le 2 dn$ and $d$ is sufficiently small.

  Now we estimate the probability $\P(\Sp\{X_{1},\dots, X_{n-k}\}=V \wedge Y_1,\dots, Y_r, Z_1,\dots, Z_s \mbox{ linearly independent in } V)$ in~\eqref{eqn:unsat:1}. Since $Y_1,\dots, Y_r, Z_1,\dots, Z_s$ are linearly independent in $V$ and $\Sp\{X_{1},\dots, X_{n-k}\}=V$, there exists $n-k-r-s$ vectors $X_{i_1},\dots, X_{i_{n-k-r-s}}$ which together with $Y_1,\dots,Y_r, Z_1,\dots, Z_s$ are a basis for $V$. With a loss of a factor $\binom{n-k}{r+s}$ in probability, we can assume that 
  \(
  \Sp\left\{ 
      \left\{ Y_i \right\}_1^r
      ,
      \left\{ Z_i \right\}_1^s
      ,
      \left\{ X_i \right\}_{r+s+1}^{n-k}
    \right\}=V,
    \)
  %$Y_1,\dots, Y_r,Z_1,\dots, Z_s, X_{r+s+1},\dots X_{n-k}$ span $V$, 
  and the remaining vectors $X_{1},\dots, X_{r+s}$ belong to $V$. Thus,
  \begin{align}\label{eqn:unsat:3}
    &\P\Big(\Sp\{X_{1},\dots, X_{n-k}\}=V \wedge Y_1,\dots, Y_r, Z_1,\dots, Z_s \mbox{ linearly independent in } V\Big) \nonumber \\
    \le & \binom{n-k}{r+s} \P\Big(\Sp\left\{ 
      \left\{ Y_i \right\}_1^r
      ,
      \left\{ Z_i \right\}_1^s
      ,
      \left\{ X_i \right\}_{r+s+1}^{n-k}
    \right\}=V \wedge X_{1},\dots, X_{r+s}\in V\Big)\nonumber \\
    \le & \binom{n-k}{r+s} \P\Big(\Sp\left\{ 
      \left\{ Y_i \right\}_1^r
      ,
      \left\{ Z_i \right\}_1^s
      ,
      \left\{ X_i \right\}_{r+s+1}^{n-k}
    \right\}=V\Big) \P(X_{1},\dots, X_{r+s}\in V)\nonumber \\
    \le & \binom{n-k}{r+s} \P\Big(\Sp\left\{ 
      \left\{ Y_i \right\}_1^r
      ,
      \left\{ Z_i \right\}_1^s
      ,
      \left\{ X_i \right\}_{r+s+1}^{n-k}
    \right\}=V\Big) (1-\alpha_n)^{(r+s) (d_{comb}-1/n)}.
  \end{align}
  Putting \eqref{eqn:unsat:1}, \eqref{eqn:unsat:2} and \eqref{eqn:unsat:3} together,  
  \begin{align*}\label{eqn:unsat:3}
    & \P\Big(\Sp\{X_{1},\dots, X_{n-k}\}=V\Big) \\
    =&\frac{\P\Big(\Sp\{X_{1},\dots, X_{n-k}\}=V \wedge Y_1,\dots, Y_r, Z_1,\dots, Z_s \mbox{ linearly independent in } V\Big)}{\P\Big(Y_1,\dots, Y_r, Z_1,\dots, Z_s \mbox{ linearly independent in } V\Big)} \nonumber \\
    \le & (3/2)^{-r+1} (1-\alpha_n)^{-(r+s) d_{comb}} \binom{n-k}{r+s} \P\Big(\Sp\left\{ 
      \left\{ Y_i \right\}_1^r
      ,
      \left\{ Z_i \right\}_1^s
      ,
      \left\{ X_i \right\}_{r+s+1}^{n-k}
    \right\}=V\Big) (1-\alpha_n)^{(r+s) (d_{comb}-1/n)} \nonumber  \\
    \le & (3/2)^{-r/2}  \binom{n-k}{r+s} \P\Big(\Sp\left\{ 
      \left\{ Y_i \right\}_1^r
      ,
      \left\{ Z_i \right\}_1^s
      ,
      \left\{ X_i \right\}_{r+s+1}^{n-k}
    \right\}=V\Big) .
  \end{align*}

\end{proof}

We remark that our proof above follows \cite[Section 4]{TV}. The treatment of \cite{M1} is similar but the author oversimplified the process by relying on the aforementioned incorrect result \cite[Proposition 2.3]{M1}. We now conclude the main result. 
\begin{proof}[Proof of Theorem \ref{theorem:corank:main}]
  Let $\CE_{n-k, dense}, \CE_{n-k, semi-sat}, \CE_{n-k, unsat}$ be the events introduced in \eqref{eqn:dense}, \eqref{eqn:semi-sat}, \eqref{eqn:unsat}. By definition, on these events, if $\codim(W_{n-k})=k_0$ then  
  $$|\P(X\in W_{n-k})-\frac{1}{q^{k_0}}| \le e^{-d \alpha_n n}.$$
\end{proof}

Next we give a proof of Theorem \ref{theorem:corank:main'}. The statement is clearly equivalent to Theorem \ref{theorem:corank:main} if $k> u$. In what follows we assume $k\le u$.
\begin{proof}[Proof of Theorem \ref{theorem:corank:main'}]. By Proposition \ref{prop:sparse}, with probability at least $1- e^{-c \alpha_n n}$ the subspace $\lang X_{1},\dots ,X_{n-k}\rang$ is not $\delta$-sparse, and hence $\lang X_{1},\dots ,X_{n+u-k}\rang$ is also not $\delta$-sparse on this event. 

  For the semi-saturated subspace, the conclusion of Proposition \ref{prop:semi-sat} continues to hold using Lemma \ref{lemma:inverse:semi}. Indeed, with the same choice of parameters, by the proof of Proposition of \ref{prop:semi-sat} 
  \begin{align*}
    \P(\CF_{n+u-k, k_0, semi-sat}) = \sum_{V semi-sat} \P(W_{n+u-k} = V) &\le  \sum_{V semi-sat} \P(X_{1},\dots, X_{n+u-k} \in V)\\ 
    &\le  \sum_{V semi-sat} \P(X_{1},\dots, X_{n-k} \in V) \le e^{-n}.
  \end{align*}

  Finally, for unsaturated subspaces, by the same method we can show the following analog of Proposition \ref{prop:unsat} with the same parameters
  \begin{equation}\label{eqn:unsat:n+u}
    \P\Big(\Sp\{X_{1},\dots, X_{n+u-k}\} \mbox{ unsaturated and of dim. between $r+s$ and $n$}\Big) \le (3/2)^{-r/2} \binom{n+u-k}{r+s}.
  \end{equation}
  Indeed, to justify this result we just use the same swapping method of Theorem \ref{theorem:swapXY}; the only difference is that there are $\binom{n+u-k}{r+s}$ ways to choose the $X_{1},\dots, X_{r+s}$ in \eqref{eqn:unsat:3}. The bound~\eqref{eqn:unsat:n+u} is again smaller than $(3/2)^{-\delta_1 n/4}$ if $u\le \eta n $ with $\eta$ sufficiently small compared to $\delta_1$. 
\end{proof}
\vskip .1in
{\bf \underline{Part II. Proofs of the lemmas}}\label{appendix:lemmas}: Here we sketch the proof of Lemma \ref{lemma:inverse:semi}, Lemma \ref{lemma:swapping}, and Theorem \ref{theorem:LO} by following \cite{M1}.

Recall that $\tr: \F_q \to \F_p$ is the field trace, which gives rise to the isomorphism between $\F_q$ and $\wh{\F}_q$ by $x \to e_p(\tr(tx)) = \exp(2\pi i \tr(tx)/p)$ (and so we will identify $\wh{\F}_q$ with $\F_q$). Let $\mu$ be a probability measure on $\F_q$. The Fourier transform of $\mu$ is
$$\wh{\mu}(x)=\sum_{t\in \F_q} \mu(t) e_p(\tr(x t)) = \E e_p(\tr(x \xi)), \mbox{ where $\xi$ has distribution $\mu$}.$$
We define the additive spectrum by 
$$\Spec_{1-\eps} \mu : =\{x \in \F_q, |\wh{\mu}(x)| \ge 1-\eps\}.$$

We will be using the following two important results. 

\begin{theorem}[Kneser]\cite[Theorem 5.5]{TVbook} Let $A,B \subset Z$ be finite subsets of an abelian group $Z$. Then
  $$|A+B| + |\operatorname{Sym}(A+B)| \ge |A|+|B|,$$
  where $\operatorname{Sym}(X) = \{h\in Z: h+X=X\}$. 
\end{theorem}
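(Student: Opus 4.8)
The plan is to deduce this from the standard form of Kneser's theorem, $|A+B|\ge |A+H|+|B+H|-|H|$ with $H:=\operatorname{Sym}(A+B)$, which already implies the stated inequality since $|A+H|\ge|A|$ and $|B+H|\ge|B|$. To prove the standard form I would first reduce to the case $H=\{0\}$. Because $A+B$ is a union of cosets of $H$, passing to the quotient $\bar Z:=Z/H$ gives $\overline{A+B}=\bar A+\bar B$ with $|\overline{A+B}|=|A+B|/|H|$, and moreover $\operatorname{Sym}(\bar A+\bar B)$ is trivial: a nonzero stabilizer in $\bar Z$ would lift to a subgroup of $Z$ properly containing $H$ and stabilizing $A+B$, contradicting the maximality of $H$. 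Thus it suffices to show that $\operatorname{Sym}(A+B)=\{0\}$ forces $|A+B|\ge |A|+|B|-1$; multiplying this inequality (applied in $\bar Z$) by $|H|$ recovers $|A+B|\ge |A+H|+|B+H|-|H|\ge |A|+|B|-|H|$, which is the desired bound.

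I would prove the trivial-stabilizer statement by induction on $|B|$. For $|B|=1$ the sumset is a translate of $A$ and equality holds. For $|B|\ge 2$, translate so that $0\in A\cap B$ and introduce the Dyson $e$-transform: for $e\in Z$ set $A_e:=A\cup(B+e)$ and $B_e:=B\cap(A-e)$, so that $|A_e|+|B_e|=|A|+|B|$ and $A_e+B_e\subseteq A+B$. If $B_e=B$ for every $e$ with $B_e\neq\emptyset$ (equivalently, every $e\in A-B$), then $B+e\subseteq A$ for all such $e$, hence $A+B-B=A$; consequently the subgroup generated by $B-B$ stabilizes $A+B$, and by hypothesis this subgroup is trivial, forcing $|B|=1$, a contradiction. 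So we may pick $e$ with $\emptyset\neq B_e\subsetneq B$. If $\operatorname{Sym}(A_e+B_e)=\{0\}$, the inductive hypothesis applied to $(A_e,B_e)$ (which has strictly smaller second set) gives $|A+B|\ge|A_e+B_e|\ge|A_e|+|B_e|-1=|A|+|B|-1$, and we are done.

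The genuinely delicate point, and the main obstacle, is the residual case where $\operatorname{Sym}(A_e+B_e)=K\neq\{0\}$ while $\operatorname{Sym}(A+B)=\{0\}$: here one must exploit that $A+B$ properly contains the $K$-periodic set $A_e+B_e$ yet is not itself $K$-periodic, so $A+B$ meets some coset of $K$ in a proper nonempty subset. The standard remedy is to iterate the transform to a pair for which $|B_e|$ is minimal and then analyze the coset structure of $A+B$ modulo $K$, invoking the already established quotient case of the statement; this bookkeeping is elementary but somewhat intricate, whereas every other step is plain set arithmetic. For the purposes of this paper it of course suffices to cite \cite[Theorem 5.5]{TVbook} for the full argument.
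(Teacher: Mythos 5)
The paper does not actually prove this statement---it is quoted verbatim from \cite[Theorem 5.5]{TVbook}---so the only question is whether your sketch stands on its own. Your reductions are all sound: deducing the stated inequality from the standard form $|A+B|\ge |A+H|+|B+H|-|H|$ with $H=\operatorname{Sym}(A+B)$; the passage to $Z/H$, where the stabilizer of the image sumset is trivial because its preimage would stabilize $A+B$ and hence lie in $H$; and the e-transform set-up, including the degenerate case $B_e=B$ for all $e\in A-B$, which correctly forces the group generated by $B-B$ to stabilize $A+B$ and hence $|B|=1$.

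However, the residual case you flag is not a loose end but the entire content of Kneser's theorem, and your one-sentence remedy does not close the argument. As you have set it up, the induction is on $|B|$ for the trivial-stabilizer statement only. In the case $\operatorname{Sym}(A_e+B_e)=K\neq\{0\}$ you cannot simply ``invoke the already established quotient case'': passing to $Z/K$ need not decrease $|B|$ (if $B$ meets each $K$-coset at most once then $|\bar B|=|B|$), the image sumset $\bar A+\bar B$ need not have trivial stabilizer, and nothing proved so far applies to that pair. The complete proofs handle this by a more delicate induction (on $|B|$, or on $|A+B|+|B|$, for the full statement with stabilizer) combined with a careful count of how $A+B$ meets those $K$-cosets it intersects without filling---showing in effect that the periodicity of $A_e+B_e$ can be traded for the missing elements. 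That counting is the heart of the theorem and is absent from your sketch, so as a self-contained argument the proposal has a genuine gap; deferring to \cite[Theorem 5.5]{TVbook}, as both you and the paper do, is the right course here.
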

Note that $\operatorname{Sym}(X)$ is a symmetric additive subgroup of $Z$.  As $\operatorname{Sym}(A_1+\dots+A_k)$ is increasing in $k$, iterating the result we obtain
\begin{cor}\label{cor:Kneser}
  $$|A_1+\dots+A_k| +(k-1) |\operatorname{Sym}(A_1+\dots+A_k)| \ge |A_1|+\dots + |A_k|.$$
\end{cor}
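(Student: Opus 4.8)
The plan is to prove the bound by induction on $k$, peeling off one summand at a time via Kneser's theorem. The base case $k=1$ is trivial, since both sides then equal $|A_1|$ up to the nonnegative term $0\cdot|\operatorname{Sym}(A_1)|$, and the case $k=2$ is exactly the stated form of Kneser's theorem. So I would assume the bound has been established for $k-1$ summands and set $B := A_1 + \dots + A_{k-1}$ and $S := A_1 + \dots + A_k = B + A_k$.

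The one structural fact to isolate first is the monotonicity of the symmetry group: if $h + B = B$ then $h + S = (h+B) + A_k = S$, so $\operatorname{Sym}(B) \subseteq \operatorname{Sym}(S)$, and in particular $|\operatorname{Sym}(B)| \le |\operatorname{Sym}(S)|$; also $\operatorname{Sym}(B + A_k) = \operatorname{Sym}(S)$ tautologically. Then I would apply Kneser's theorem to the pair $B, A_k$ and the inductive hypothesis to $B$, giving
\[
  |S| \ge |B| + |A_k| - |\operatorname{Sym}(S)|, \qquad |B| \ge \sum_{i=1}^{k-1}|A_i| - (k-2)\,|\operatorname{Sym}(B)|,
\]
and combine them: substituting the second inequality into the first, bounding $|\operatorname{Sym}(B)| \le |\operatorname{Sym}(S)|$, and collecting the two $|\operatorname{Sym}(S)|$ contributions yields $|S| \ge \sum_{i=1}^{k}|A_i| - (k-1)\,|\operatorname{Sym}(S)|$, which is the claimed inequality and closes the induction.

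There is essentially no obstacle here. The only step that is not a direct substitution is the inclusion $\operatorname{Sym}(B) \subseteq \operatorname{Sym}(S)$, needed so that the error term from the inductive hypothesis, a priori measured by $|\operatorname{Sym}(B)|$, can be absorbed into the single quantity $|\operatorname{Sym}(S)|$ appearing in the target bound; this is precisely the content of the remark that $\operatorname{Sym}(A_1 + \dots + A_k)$ is increasing in $k$.
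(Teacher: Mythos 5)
Your proof is correct and is exactly what the paper has in mind: the paper's one-line justification ("as $\operatorname{Sym}(A_1+\dots+A_k)$ is increasing in $k$, iterating the result") is precisely the induction you spell out, with the inclusion $\operatorname{Sym}(B)\subseteq\operatorname{Sym}(S)$ being the monotonicity remark.
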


To start with, we prove the following version of the classical Erd\H{o}s-Littlewood-Offord result in finite field, which was used in the proof of Proposition \ref{prop:sparse}. 
\begin{theorem}\cite[Theorem 2.4]{M1}\label{theorem:LO}
  Let $X \in \F_q^n$ be a random vector with iid entries taken from an $\alpha$-balanced distribution. Suppose $w \in \F_q^n$ has at least $m$ non-zero coefficients. Then we have
  $$|\P(X \cdot w =r) -\frac{1}{q}| \le \frac{2}{\sqrt{\alpha m}}.$$
\end{theorem}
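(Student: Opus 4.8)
The plan is to run the standard Fourier-analytic proof of Erd\H os--Littlewood--Offord adapted to $\F_q$, the only nontrivial input being a level-set estimate for the large spectrum of an $\alpha$-balanced measure, for which Kneser's theorem (Corollary \ref{cor:Kneser}) is used. First, Fourier inversion: writing $\P(X\cdot w = r) = \frac1q\sum_{t\in\F_q}e_p(-\tr(tr))\,\E\,e_p(\tr(t\,X\cdot w))$ and using independence of the coordinates of $X$, one gets $\E\,e_p(\tr(t\,X\cdot w)) = \prod_{i:\,w_i\ne 0}\widehat\mu(tw_i)$ (the factors with $w_i=0$ equal $1$). Isolating the term $t=0$,
$$\Bigl|\P(X\cdot w = r) - \tfrac1q\Bigr| \le \frac1q\sum_{t\ne 0}\prod_{i:\,w_i\ne 0}\bigl|\widehat\mu(tw_i)\bigr|.$$

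Next, decoupling. Since $|\widehat\mu|\le 1$, keep only $m$ of the factors: fix $S$ with $|S|=m$ and $w_i\ne 0$ for $i\in S$. For each fixed $i$ the map $t\mapsto tw_i$ is a bijection of $\F_q^\times$. Applying the AM--GM inequality $\prod_{i\in S}a_i \le \frac1m\sum_{i\in S}a_i^m$ with $a_i = |\widehat\mu(tw_i)|$ and summing over $t\ne 0$ yields
$$\Bigl|\P(X\cdot w = r) - \tfrac1q\Bigr| \le \frac1q\sum_{s\ne 0}\bigl|\widehat\mu(s)\bigr|^m.$$
It is essential to decouple the \emph{first} power here; applying Cauchy--Schwarz in $t$ before decoupling would only produce the exponent $m^{-1/4}$, which is too weak. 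Now symmetrize: let $\psi = \xi - \xi'$ for independent copies $\xi,\xi'$ of $\mu$, with law $\nu$, so that $\widehat\nu = |\widehat\mu|^2\in[0,1]$ and $\nu(\{0\}) = \sum_x\mu(x)^2 \le \max_x\mu(x)\le 1-\alpha$. With $k=\lfloor m/2\rfloor$ we have $|\widehat\mu(s)|^m \le \widehat\nu(s)^k$, so Parseval gives
$$\frac1q\sum_{s\ne 0}|\widehat\mu(s)|^m \le \frac1q\sum_{s\ne 0}\widehat\nu(s)^k = \P\bigl(\psi_1+\cdots+\psi_k = 0\bigr) - \frac1q,$$
for i.i.d.\ copies $\psi_1,\dots,\psi_k$ of $\psi$. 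It therefore suffices to show $\P(\psi_1+\cdots+\psi_k=0) - \tfrac1q \le \sqrt2/\sqrt{\alpha k}$, which, after bookkeeping, gives the advertised bound $2/\sqrt{\alpha m}$.

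The crux is bounding $\frac1q\sum_{s\ne 0}\widehat\nu(s)^k$. Split the sum dyadically according to the size of $\widehat\nu(s)$; the key estimate needed is a bound on the large spectrum,
$$\bigl|\{\,s : |\widehat\mu(s)|^2 \ge 1-\epsilon\,\}\bigr| \;=\; O\!\bigl(q\sqrt{\epsilon/\alpha}\,\bigr)\qquad(0<\epsilon<1),$$
i.e.\ an $\alpha$-balanced measure has a small $(1-\epsilon)$-spectrum. This is where Kneser's theorem enters: if $|\widehat\mu(s)|$ is close to $1$, then $\tr(s\xi)$ is tightly concentrated, so $\mu$ is essentially supported on a coset of the additive subgroup $\{z:\tr(sz)=0\}$, and combining this with the $\alpha$-balanced hypothesis and the sumset-growth bound of Corollary \ref{cor:Kneser} forces the characters $s$ in the spectrum to lie in a small set. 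Feeding this into the dyadic decomposition and choosing the cut-off $\epsilon\asymp \log(\alpha k)/k$ — bounding $\sum_{s:\,\widehat\nu(s)<1-\epsilon}\widehat\nu(s)^k \le q\,e^{-\epsilon k}$ on the ``small'' part and using the level-set count on the near-$1$ part — gives $\P(\psi_1+\cdots+\psi_k=0) - \tfrac1q = O(1/\sqrt{\alpha k})$, with a careful accounting of constants producing the factor $2$.

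Steps $1$--$3$ are routine; the main obstacle is the level-set/Kneser estimate and the dyadic summation built on it \emph{in the sparse regime} $\alpha_n\ge C_0\log n/n$, where one must check that the resulting error is genuinely $O(1/\sqrt{\alpha_n m})$ and not inflated by a factor of $\alpha_n^{-1}$ — precisely the point at which, as the paper notes, Maples' treatment tacitly assumed $\alpha$ to be a constant. In the prime case $q=p$ the additive-subgroup subtlety disappears (there are no proper subgroups of $\F_p$), but the level-set count $|\{s:|\widehat\mu(s)|^2\ge 1-\epsilon\}| = O(p\sqrt{\epsilon/\alpha})$ is still required.
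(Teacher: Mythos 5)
Your outline is correct and it leans on the same two pillars as the paper --- Fourier inversion plus a Kneser-based bound on level sets of the Fourier transform, with the $\alpha$-balanced hypothesis used to exclude nontrivial additive subgroups from those level sets --- but it is organized differently. The paper never decouples: it keeps the weighted product, bounds it by $\exp\bigl(-\tfrac12 f(t)\bigr)$ with $f(t)=\sum_l\bigl(1-|\wh\mu(w_lt)|^2\bigr)$, proves $|\{t\neq 0: f(t)\le v\}|\le q\sqrt{v/(\alpha m)}$ for $v\le\alpha m$ via the sub-level-set sumset inclusion $kT(v)\subset T(k^2v)$, subgroup-exclusion (averaging $f$ over a subgroup and using balancedness for cosets of subgroups), and Corollary \ref{cor:Kneser}, and then integrates against $e^{-v/2}$. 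You instead decouple by AM--GM to the unweighted sum $\frac1q\sum_{s\neq0}|\wh\mu(s)|^m$ and then need exactly the single-character specialization of the same three ingredients: your spectrum bound $|\{s:|\wh\mu(s)|^2\ge1-\eps\}|=O(q\sqrt{\eps/\alpha})$ follows from $kA_\eps\subset A_{k^2\eps}$, the fact that $A_{k^2\eps}$ contains no nontrivial subgroup when $k^2\eps<\alpha$ (this averaging argument --- not the ``essentially supported on a coset'' heuristic --- is the precise mechanism, and in $\F_q$ it is where balancedness with respect to subgroup cosets is genuinely used), and Kneser. What your route buys is modularity: a clean statement about $\alpha$-balanced measures, independent of $w$, to which the weighted problem reduces; what the paper's route buys is that the constant $2$ falls out immediately from $\int_0^\infty\sqrt v\,e^{-v/2}\,dv=\sqrt{2\pi}$ together with the observation that the claim is trivial when $\alpha m\le 4$.

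Two points to tighten. First, your single-cutoff bookkeeping with $\eps\asymp\log(\alpha k)/k$ as literally described gives $O\bigl(\sqrt{\log(\alpha k)/(\alpha k)}\bigr)$, an unnecessary $\sqrt{\log}$ loss; you must carry out the genuinely dyadic (or level-set integral) summation you also mention to recover $O(1/\sqrt{\alpha k})$ and the stated constant. Second, the symmetrization to $\nu$ with $k=\lfloor m/2\rfloor$ is unnecessary and costs a factor in the constant; it is cleaner to bound $|\wh\mu(s)|^m\le\exp\bigl(-\tfrac m2(1-|\wh\mu(s)|^2)\bigr)$ directly and integrate over level sets of $\psi(s)=1-|\wh\mu(s)|^2$, which is essentially what the paper does and yields the factor $2$ without strain. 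Your concern about the sparse regime is moot here: nothing in either argument degrades as $\alpha_n\to0$ beyond the stated $1/\sqrt{\alpha m}$, the only error term being $e^{-c\alpha m}$, which is always dominated by it.
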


\begin{proof}[Proof of Theorem \ref{theorem:LO}] Let $\xi_1,\dots, \xi_n$ denote the entries of $X$, and $w_1,\dots,w_n$ denote the components of $w$. 
  %\corE{
  By $\F_p$--linearity of the field trace, we can write
  \[
    1_{X \cdot w=r}
    =
    \frac{1}{q} \sum_{t \in \F_q} e_p\Big(\tr(\sum_l \xi_l w_l)\Big)e_p\Big(-\tr(rt)\Big).
  \] %}
  By independence of the $\{\xi_i\}_1^n$ we can therefore write
  \[
    \P(X \cdot w =r) = 
    \E (1_{X \cdot w=r}) = 
    \frac{1}{q} \sum_{t \in \F_q} \prod_{l=1}^n \E e_p(\tr(\xi_l w_l t)) e_p(-\tr(rt)).
  \]
  By the triangle inequality
  \[
    |\P(X \cdot w =r) -\frac{1}{q}| \le  
    \frac{1}{q} \sum_{t \in \F_q, t\neq 0} \prod_{l=1}^n |\E e_p(\tr(\xi_l w_l t))| =  
    \frac{1}{q} \sum_{t \in \F_q, t\neq 0} \prod_{l=1}^n |\wh{\mu}(w_lt)|.
  \]
  Define $\psi(t) =1 -|\wh{\mu}(t)|^2$. Using $|x| \le \exp(-(1-x^2)/2)$ for $|x|\le 1$, 
  \[
    |\P(X \cdot w =r) -\frac{1}{q}| \le  
    \frac{1}{q}  \sum_{t \in \F_q, t\neq 0}  \exp\biggl(-\frac{1}{2} \sum_{l=1}^n \psi(w_lt)\biggr).
  \]
  Set $f(t) = \sum_l \psi(w_l t)$, then
  \begin{equation}\label{eqn:level}
    |\P(X \cdot w =r) -\frac{1}{q}| \le  \frac{1}{2} \int_0^\infty  \frac{1}{q}  |\{t \neq 0, f(t) \le v\}| e^{-v/2} dv=  \frac{1}{2} \int_0^\infty  \frac{1}{q} |T'(v)|  e^{-v/2} dv
  \end{equation}
  where the level sets are defined as
  $$T(v):= \{t, f(t) \le v\} \mbox{ and } T'(v)= T(v)\bs \{0\}.$$
  \begin{claim}
    For any $v>0$ we have 
    $$kT(v) = T(v)+\dots +T(v) \subset T(k^2v).$$
  \end{claim}
  \begin{proof} It suffices to show that for any $\beta_1,\dots, \beta_k \in \F_q$ 
    $$\psi(\beta_1+\dots+\beta_k) \le k (\psi(\beta_1)+\dots+\psi(\beta_k)).$$
    Indeed, by definition of $\psi$, after squaring out the above is equivalent to
    $$1 -\sum_{t_1,t_2\in \F_q} \mu(t_1)\mu(-t_2) \cos\Big(\frac{2\pi}{p} \tr((t_1+t_2)(\beta_1+\dots+\beta_k))\Big) \le k^2 -k \sum_i \sum_{t_1,t_2\in \F_q} \mu(t_1)\mu(-t_2) \cos\Big(\frac{2\pi}{p} \tr((t_1+t_2)\beta_i)\Big).$$
%But this follows from the convexity of $\cos(.)$ that $\cos(\beta_1+\dots +\beta_k) \ge k \sum \cos \beta_i -k^2+1$.
   % \corE{
      Hence it suffices to observe that for all real numbers
      $\left( \beta_i \right)_1^k,$
      $\cos(\beta_1+\dots +\beta_k) \ge k \sum \cos \beta_i -k^2+1,$ which we justify now.
      If for even a single $\beta_i,$ $\cos(\beta_i) \leq 0,$ then the largest value that can be attained on the right
      hand side is $1-k,$ from which it follows the equality holds.  
      Hence by periodicity, we may assume that all these $\beta_i \in (-\pi/2,\pi/2).$
      The function 
      \[
	\left( \beta_i \right)_1^k \mapsto 
	\cos(\beta_1+\dots +\beta_k)
	- k \sum \cos(\beta_i)
      \]
      is smooth and its only local minimum in the domain considered can be checked to occur at $0,$ at which value equality is attained.
    %}

  \end{proof}

  By Corollary \ref{cor:Kneser} 
  \begin{equation}\label{eqn:Kneser:v}
    k T(v) \le |T(k^2v)| + (k-1) \operatorname{Sym}(T(v)+\dots+ T(v)).
  \end{equation}

  We next claim that if $k^2 v <\alpha_n m$, that is $k < \sqrt{\frac{\alpha_n m}{v}}$, then $T(k^2v)$ contains no-nontrivial additive subgroup $H$ of $\F_q$, and so $|\operatorname{Sym}(T(v)+\dots+ T(v))|=1$. Indeed, fix a subgroup $H$, then
  $$|H|^{-1} \sum_{t\in H} f(t) = \sum_{l=1}^n |H|^{-1} \sum_{t\in H} \psi(w_l t) = \sum_{l=1}^n |H|^{-1} \sum_{t\in H} (1- |\wh{\mu}(w_l t)|^2).$$
  By the Fourier inversion formula, and by the $\alpha$-balanced assumption on the distribution $\mu$
  $$ |H|^{-1} \sum_{t\in H} (1- |\wh{\mu}(w_l t)|^2) = \sum_{s_1, s_2 \in \F_q} \mu(s_1)\mu(s_2) 1_{H^\perp} (w_l(s_1-s_2)) \le 1-\alpha_n.$$
  Since at least $m$ of the choices $w_l$ are non-zero
  $$|H|^{-1} \sum_t f(t) \ge \alpha_n m.$$
  Thus there exists $t\in H$ such that $f(t) \ge \alpha_n m$, and so $H \not \subset T(k^2 v)$. So by \eqref{eqn:Kneser:v} we have 
  $$|T'(v)| \le \sqrt{\frac{v}{\alpha_n m}}|T'(\alpha_n m)| \le  \sqrt{\frac{v}{\alpha_n m}} q, \mbox{ for all $v \le \alpha_n m$}.$$
  Substitute back to \eqref{eqn:level} we obtain
  $$|\P(X \cdot w =r) -\frac{1}{q}| \le  \frac{1}{2} \frac{1}{\sqrt{\alpha_n m}} \int_0^\infty  \sqrt{v}e^{-v/2} dv + e^{-\alpha_n m/2}.$$
\end{proof}

Now we prove the other lemmas that were used in the treatment of semi-saturated and un-saturated subspaces.

\begin{proof}[Proof of Lemma \ref{lemma:inverse:semi}] 
  Note that $V$ is not $\delta$-sparse. Let $k_0=\codim(V)$, let $\xi_1,\dots, \xi_n$ denote the entries of $X$, we have

  $$\P(X\in V) =  \E  q^{-k_0} \sum_{t \in V^\perp} e_p\big(\tr(\sum_l \xi_l t_l)\big)  = q^{-k_0}\sum_{t \in V^\perp} \prod_{l=1}^n\E e_p(\tr(\xi_l t_l))= q^{-k_0} \sum_{t \in V^\perp} \prod_{l=1}^n \wh{\mu}(t_l),$$
% where we notice that  \sum_{t \in V^\perp} e_p\Big(\tr(\sum_l \xi_l t_l)\Big) =0 if $X \notin V$.
  where $t_1,\dots,t_n$ denote the entries of $t$, and where we used the fact that $\sum_{t \in V^\perp} e_p(\tr(\sum_l \xi_l t_l)) =0$ if and only if $X \notin V$. By the triangle inequality,
  $$|\P(X\in V) -q^{-k_0}|\le q^{-k_0} \sum_{t \in V^\perp, t \neq 0} \prod_{l=1}^n |\wh{\mu}(t_l)|.$$
  By \corE{the} pigeonhole principle, for some $t \in V^\perp, t \neq 0$,
  \[
    e^{-d \alpha_n n} \le |\P(X\in V) -q^{-k_0}| \le   \prod_{l=1}^n |\wh{\mu}(t_l)|.
  \]
  Again, using $|x| \le \exp(-\frac{1}{2}(1-x^2))$ for $|x|\le 1$, 
  \[
    \sum_{l=1}^n 1 -|\wh{\mu}(t_l)|^2 \le 2d \alpha_n n.
  \]
  By averaging, there exists an index set $\sigma \subset [n]$ with $|\sigma| \ge (1 -\delta/2) n$ and $|\wh{\mu}(t_l)| \ge 1-10d  \delta^{-1} \a_n$ for $l\in \sigma$. In other words, for all $l\in \sigma$
  \[
    t_l \in \Spec_{1-10d  \delta^{-1} \a_n}\mu.
  \]
  \begin{claim}\label{claim:spec} The set $\Spec_{1-\alpha_n/2}$ does not contain any non-trivial additive subgroup $H$ of $\F_q$.
  \end{claim}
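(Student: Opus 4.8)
The plan is to argue by contradiction: suppose $H$ is a non-trivial additive subgroup of $\F_q$ with $H \subseteq \Spec_{1-\alpha_n/2}\mu$. The strategy is precisely the one already used inside the proof of Theorem \ref{theorem:LO}: average the quantity $1-|\wh{\mu}(t)|^2$ over $t\in H$ and show this average is at least $\alpha_n$. Since membership of every $t\in H$ in $\Spec_{1-\alpha_n/2}\mu$ forces $|\wh{\mu}(t)|^2 \ge (1-\alpha_n/2)^2 = 1-\alpha_n+\alpha_n^2/4 > 1-\alpha_n$, i.e. $1-|\wh{\mu}(t)|^2 < \alpha_n$ for every $t \in H$, this yields the desired contradiction.

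For the averaging step I would expand $|\wh{\mu}(t)|^2 = \sum_{s_1,s_2\in\F_q}\mu(s_1)\mu(s_2)\,e_p(\tr(t(s_1-s_2)))$ and use orthogonality of the additive characters restricted to $H$, namely $|H|^{-1}\sum_{t\in H}e_p(\tr(tx)) = \oindicator{H^\perp}(x)$, where $H^\perp=\{x\in\F_q:\tr(tx)=0\text{ for all }t\in H\}$. This gives
\[
  |H|^{-1}\sum_{t\in H}|\wh{\mu}(t)|^2 \;=\; \sum_{s_1,s_2\in\F_q}\mu(s_1)\mu(s_2)\,\oindicator{H^\perp}(s_1-s_2) \;=\; \sum_{c\in\F_q/H^\perp}\mu(c+H^\perp)^2 ,
\]
which is exactly the $w_l\equiv 1$ special case of the Fourier-inversion identity invoked in the proof of Theorem \ref{theorem:LO} (so I may simply cite that computation).

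It remains to bound the right-hand side using that $H\neq\{0\}$. If $H\neq\F_q$ then $H^\perp$ is a proper additive subgroup of $\F_q$, and the $\alpha_n$-balanced hypothesis applied with $T=H^\perp$ gives $\mu(c+H^\perp)\le 1-\alpha_n$ for every coset; if $H=\F_q$ then $H^\perp=\{0\}$ and the same hypothesis applied with the trivial subgroup $T=\{0\}$ (equivalently $\max_x\mu(x)\le 1-\alpha_n$) again gives $\mu(c+H^\perp)=\mu(c)\le 1-\alpha_n$. In either case
\[
  \sum_{c\in\F_q/H^\perp}\mu(c+H^\perp)^2 \;\le\; (1-\alpha_n)\sum_{c\in\F_q/H^\perp}\mu(c+H^\perp) \;=\; 1-\alpha_n ,
\]
so $|H|^{-1}\sum_{t\in H}(1-|\wh{\mu}(t)|^2)\ge \alpha_n$; hence there is some $t\in H$ with $|\wh{\mu}(t)|^2\le 1-\alpha_n<(1-\alpha_n/2)^2$, i.e. $t\notin\Spec_{1-\alpha_n/2}\mu$, contradicting $H\subseteq\Spec_{1-\alpha_n/2}\mu$.

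This argument is short and is essentially a repackaging of one step in the proof of Theorem \ref{theorem:LO}, so I do not expect a real obstacle. The only points needing a little care are the bookkeeping in the degenerate case $H=\F_q$, where the annihilator collapses to $\{0\}$ and one invokes balancedness for the trivial subgroup, and the elementary inequality $\sqrt{1-\alpha_n}<1-\alpha_n/2$ that lets one pass from the average bound to the spectrum threshold $1-\alpha_n/2$.
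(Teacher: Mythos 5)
Your argument is correct and is essentially the paper's own proof: the paper's one-line justification (``By Fourier's inversion formula $\sum_{s\in H}|\wh\mu(s)|^2 \le |H|(1-\alpha_n)$, using that $\mu$ is $\alpha_n$-balanced'') is exactly your averaging computation, with the expansion over cosets of $H^\perp$ and the application of balancedness to $T=H^\perp$ spelled out. The only difference is that you make the degenerate case $H=\F_q$ and the inequality $(1-\alpha_n/2)^2>1-\alpha_n$ explicit, which the paper leaves implicit.
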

  \begin{proof} By Fourier's inversion formula
    $$(1-\alpha_n/2)^2 |H \cap \Spec_{1-\alpha_n/2}(\mu)| \le \sum_{s\in H} |\wh{\mu}(s)|^2 \le |H| (1-\alpha_n),$$
    where in the last estimate we used the fact that $\mu$ is $\alpha_n$-balanced.
  \end{proof}
  Set $k:= \lfloor \beta^{-1} \rfloor$ and choose $d$ so that 
  \begin{equation}\label{eqn:d}
    d \le k^{-2}  \delta/5.
  \end{equation}
  Then  
  \[
    \Spec_{1-10d \delta^{-1} \a_n} \subset \Spec_{1- 2k^{-2} \a_n}(\mu):=A.
  \]
  We next claim that 
  \begin{equation}\label{eqn:beta}
    |A\backslash \{0\}| \le \beta q.
  \end{equation}
  Indeed, this is because by applying Cauchy-Schwarz
  $$kA \subset \Spec_{1-2\a_n}(\mu).$$
  Furthermore,
  $$\operatorname{Sym}(kA) \subset (kA)-(kA) = 2kA \subset \Spec_{1-\a_n/2}(\mu).$$
  By Claim \ref{claim:spec}, $\operatorname{Sym}(kA)$ is trivially $\{0\}$. So by Corollary \ref{cor:Kneser}
  $$k|A| \le |kA| + (k-1) \le q + (k-1),$$
  proving \eqref{eqn:beta}.

  Finally, let $\CR$ be the set of non-zero $t$ in $\F_q^n$ which have at least $(1-\delta/2)n$ components $t_l$ in $\Spec_{1-10d\delta^{-1}\a}$. By \eqref{eqn:beta}, as $t \perp V$ and $V$ is not $\delta$-sparse, at least $\delta/2$ of the components $\xi_l$ are non-zero 
  $$|\CR| \le 2^n (\beta q)^{\delta n/2} q^{n-\delta n/2} \le (2 \beta^{\delta/2} q)^n.$$ 

\end{proof}

To prove Lemma \ref{lemma:swapping}, we use the following rather standard Hal\'asz--type construction from \cite[Proposition 3.6]{M1} (see also \cite[Lemma 7.1]{TV} or \cite{Halasz}).

\begin{prop}\label{prop:swapping'} There is a probability distribution $\nu: \F_q \to [0,1]$ depending on $\mu$ and $\alpha$ such that the following properties hold with $t=(t_1,\dots,t_n)$ and
  $$f(t) := \prod_{l=1}^n |\wh{\mu}(t_l)|,  g(t):= \prod_{l=1}^n |\wh{\nu}(t_l)|.$$
  \begin{itemize}
    \item For all $0<u<1$ we have $4F(u) \subset G(u)$, where 
      $$F(u) = \{t \in \F_q^n, |f(t)| > u\} \mbox{ and } G(u) = \{t \in \F_q^n, |g(t)| > u\}.$$
%\vskip .1in
    \item For all $t \in V^\perp$, 
      $$f(t)\le g^{16}(t).$$
%\vskip .1in
    \item  For all $t$,
      $$\wh{\nu}(t) \ge 0.$$
%\vskip .1in
    \item $\nu$ is $\alpha'_n$-balanced with $\alpha_n'=\alpha_n/64$. 
  \end{itemize}
\end{prop}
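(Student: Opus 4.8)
The plan is to build $\nu$ as an explicit convex combination of a point mass at $0$ and a symmetrization of $\mu$, and then verify the four bullets one at a time. Let $\xi,\xi'$ be independent with law $\mu$, let $\rho$ denote the law of $\xi-\xi'$, fix the absolute constant $\theta=1/64$, and set
$$\nu := (1-\theta)\,\delta_0 + \theta\,\rho .$$
Since $\wh{\rho}(t)=\wh{\mu}(t)\overline{\wh{\mu}(t)}=|\wh{\mu}(t)|^2$, we get $\wh{\nu}(t)=(1-\theta)+\theta|\wh{\mu}(t)|^2\ge 1-\theta>0$ for every $t$, which is the third bullet (and lets us drop absolute values: $g(t)=\prod_l\wh{\nu}(t_l)$).

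For the balancedness bullet, fix a proper additive subgroup $T\le\F_q$ and $s\in\F_q$. Conditioning on $\xi'$ and using that $\mu$ is $\alpha_n$-balanced gives $\rho(s+T)=\sum_a\mu(a)\,\mu(s+a+T)\le\max_b\mu(b+T)\le 1-\alpha_n$. Hence if $s\in T$ then $\nu(s+T)=(1-\theta)+\theta\rho(T)\le 1-\theta\alpha_n$, while if $s\notin T$ then $\nu(s+T)=\theta\rho(s+T)\le\theta\le 1-\theta\alpha_n$; so $\nu$ is $(\alpha_n/64)$-balanced, i.e. $\alpha_n'=\alpha_n/64$.

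For the remaining two bullets set $\psi(x):=1-|\wh{\mu}(x)|^2\in[0,1]$. The elementary bounds $|z|\le e^{-(1-z^2)/2}$ (for $|z|\le 1$) and $1-y\ge e^{-2y}$ (for $0\le y\le\tfrac12$) give, for every $x$,
$$|\wh{\mu}(x)|\le e^{-\psi(x)/2}, \qquad \wh{\nu}(x)=1-\theta\psi(x)\ge e^{-2\theta\psi(x)},$$
the second being legitimate since $\theta\psi(x)\le\theta<\tfrac12$. As $32\theta=\tfrac12$, raising the second estimate to the $16$th power and comparing with the first yields $|\wh{\mu}(x)|\le\wh{\nu}(x)^{16}$ for all $x$; multiplying over coordinates gives $f(t)\le g^{16}(t)$ for all $t$, hence in particular for $t\in V^\perp$, which is the second bullet. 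For the first bullet, take $t^{(1)},\dots,t^{(4)}\in F(u)$ and put $s=\sum_{j=1}^4 t^{(j)}$. From $\prod_l|\wh{\mu}(t^{(j)}_l)|>u$ and the first bound, $\sum_l\psi(t^{(j)}_l)<2\log(1/u)$ for each $j$; applying coordinatewise the sub-additivity $\psi(\beta_1+\cdots+\beta_4)\le 4(\psi(\beta_1)+\cdots+\psi(\beta_4))$ established inside the proof of Theorem~\ref{theorem:LO}, we obtain $\sum_l\psi(s_l)\le 4\sum_{j=1}^4\sum_l\psi(t^{(j)}_l)<32\log(1/u)$, and therefore
$$g(s)=\prod_{l=1}^n\bigl(1-\theta\psi(s_l)\bigr)\ge\exp\Bigl(-2\theta\sum_l\psi(s_l)\Bigr)>u^{64\theta}=u,$$
using $64\theta=1$; thus $s\in G(u)$ and $4F(u)\subset G(u)$.

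I expect no genuine obstacle here: the argument is bookkeeping once the parameters are chosen correctly. The one delicate point is that the single constant $\theta=1/64$ must simultaneously satisfy $32\theta\le\tfrac12$ (for $f\le g^{16}$), $64\theta\le 1$ (for $4F(u)\subset G(u)$), and $\theta<\tfrac12$ (for the exponential estimates and the coset bound), and that this choice then fixes the balancedness constant $\alpha_n'=\theta\alpha_n=\alpha_n/64$; a minor care point is handling, in the balancedness check, the cosets $s+T$ with $0\notin s+T$ separately from those containing $0$.
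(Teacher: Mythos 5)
Your proof is correct, and it is stronger than needed in one place (you get $f(t)\le g^{16}(t)$ for all $t$, not just $t\in V^\perp$). A comparison is slightly unusual here because the paper does not prove Proposition \ref{prop:swapping'} at all: it imports it as the ``standard Hal\'asz-type construction'' from \cite[Proposition 3.6]{M1} (cf.\ \cite[Lemma 7.1]{TV}, \cite{Halasz}), so your argument supplies a self-contained proof where the paper only cites. Your mixture $\nu=(1-\theta)\delta_0+\theta\rho$ with $\rho$ the law of $\xi-\xi'$ and $\theta=1/64$ is precisely the construction behind that citation, as the constants in the statement reveal: $\wh{\nu}=1-\theta\psi$ with $\psi=1-|\wh{\mu}|^2$ yields the balancedness constant $\alpha_n'=\theta\alpha_n=\alpha_n/64$, the bound $\wh{\nu}(x)^{16}\ge e^{-32\theta\psi(x)}=e^{-\psi(x)/2}\ge|\wh{\mu}(x)|$ produces the exponent $16$, and $64\theta=1$ is exactly what makes $4F(u)\subset G(u)$ close. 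The individual verifications are sound: the coset computation $\rho(s+T)=\sum_a\mu(a)\mu(s+a+T)\le 1-\alpha_n$ handles both the prime case ($T=\{0\}$) and general proper subgroups, with the $0\in s+T$ and $0\notin s+T$ cases treated correctly; the elementary inequalities $|z|\le e^{-(1-z^2)/2}$ for $|z|\le1$ and $1-y\ge e^{-2y}$ for $0\le y\le\tfrac12$ are valid in the ranges you invoke them (indeed you only need $y\le 1/64$); and your appeal to the coordinatewise subadditivity $\psi(\beta_1+\cdots+\beta_4)\le 4\sum_j\psi(\beta_j)$ is legitimate, since that inequality is established for an arbitrary distribution inside the paper's proof of Theorem \ref{theorem:LO} and your $\psi$ is the same function. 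The strict inequalities propagate correctly in the sumset step, so $4F(u)\subset G(u)$ holds for every $0<u<1$ as claimed.
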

%As the construction of $\nu$ is rather standard (where $\nu$ can be taken to be a lazy variant of $\mu$) we omit it here.

%There is a slight difference here compared to that of  \cite[Proposition 3.6]{M1} as we will need to control the 4-sumsets instead of 2-sumsets for the verification of Lemma \ref{lemma:swapping} (More precisely, in the proof of \cite[Lemma 2.8]{M1} it does not seem sufficient to deduce $|Sym(2F(u))|=1$ from the fact that $2F(u) \subset G(u)$). 

\begin{proof}[Proof of Lemma \ref{lemma:swapping}] Note that 
  $$\P(Y\in V) -q^{-k} = q^{-k} \sum_{t\in V^\perp, t \neq 0} \prod_{l=1}^n \wh{\nu}(t_l) = q^{-k} \sum_{t\in V^\perp, t \neq 0} g(t).$$
  Thus to show $|\P(X\in V) -q^{-k}| \le (\frac{1}{2}+o(1)) |\P(Y\in V) -q^{-k}|$, as $\wh{\nu}\ge 0$ it suffices to show that 
  $$\sum_{t\in V^\perp, t \neq 0} f(t) \le (\frac{1}{2}+o(1)) \sum_{t\in V^\perp, t \neq 0} g(t).$$

  Let  $\eps>0$ be a parameter to be sent to 0. We write 
  $$ \sum_{t\in V^\perp, t \neq 0} f(t) = \sum_{t\in V^\perp, t \neq 0, f(t)< \eps} f(t)  + \sum_{t\in V^\perp, t \neq 0, f(t) \ge \eps} f(t) := \sum_{< \eps} (f) + \sum_{ \ge \eps} (f).$$
  As $f(t) \le g(t)^{16}$, we have 
  $$\sum_{< \eps} (f)  \le \eps^{15/16} \sum_{< \eps} (g) < (\frac{1}{2}+o(1))  \sum_{< \eps} (g).$$
  We also write 
  $$ \sum_{ \ge \eps} (f) = \int_{\eps}^\infty |F'(u)|du + \eps |F'(\eps)|,$$
  where 
  $$F'(u) = F(u) \backslash \{0\} =  \{t \in V^\perp, t \neq 0, |f(t)| > u\}.$$

  \begin{claim}\label{claim:trivial:H} With $\eps = \exp(-\frac{1}{2} \alpha_n' \delta n)$, the set $G(\eps)$ does not contain any non-trivial additive subgroup $H \le V^\perp$. In particularly, as $G(u)$ is decreasing, the same happens for any $u\ge \eps$.
  \end{claim}
  \begin{proof}
    Clearly we can assume $H \cong \Z/p\Z$. Assume that $w=(w_1,\dots, w_n)\in V^\perp$ that generates $H$. Since $V$ is unsaturated (and hence not $\delta$-sparse), $w$ has at least $\delta n$ non-zero components. 
    Define $h(t):= \sum_{l=1}^n 1 - |\wh{\nu}(t_l)|^2, t\in H$. We can also write $h(t):= \sum_{i=1}^k 1 - |\wh{\nu}(t w_{l_i})|^2, t\in \Z/p\Z$, where $w_{l_i}$ are non-zero. 
    By Fourier's inversion formula, and as $\nu$ is $\alpha_n'$-balanced, $\sum_{t\in \Z/p\Z} |\wh{\nu}(t w_{l_i})|^2 \le p (1-\alpha_n')$. So
    $$\sum_{t\in H} h(t) \ge  p \alpha_n' k \ge p \alpha_n' \delta n.$$
    By pigeon-hole principle, there exists $t\in H$ such that $h(t) \ge \alpha_n' \delta n$. On the other hand
    $$g(t) =\prod_{l=1}^n |\wh{\nu}(t_l)| \le \exp(-\frac{1}{2} \sum_{l=1}^n 1- |\wh{\nu}(t_l)|^2) = \exp(-\frac{1}{2} h(t)) \le \exp(-\frac{1}{2} \alpha_n' \delta n) =\eps .$$ 
    We thus have found an element $t\in H$ which lies outside $G(\eps)$. 
  \end{proof}
  Let $u\ge \eps$. By Proposition \ref{prop:swapping'}, $\operatorname{Sym}(2F(u)) \subset 4F(u) \subset G(u)$. Thus by Claim \ref{claim:trivial:H}, the additive subgroup $\operatorname{Sym}(2F(u))$ must be trivial, and so by Corollary \ref{cor:Kneser} 
  $$2 |F(u)| \le |\operatorname{Sym}(F(u)+F(u))| + |F(u)+F(u)| \le 1 + |G(u)|.$$
  It thus follows that for all $u\ge \eps$ we have $2|F'(u)| \le |G'(u)|$. So
  $$\int_{\eps}^\infty |F'(u)|du + \eps |F'(\eps)| \le \frac{1}{2} (\int_{\eps}^\infty |G'(u)| du+ \eps |G'(\eps)|),$$
  completing the proof of Lemma \ref{lemma:swapping}.
\end{proof}

%\hoi{I deleted unused refs.}

\end{document}